\theoremstyle{plain}
\newtheorem{thm}{Theorem}[section]
\newtheorem{cor}[thm]{Corollary}
\newtheorem{lem}[thm]{Lemma}
\newtheorem{prop}[thm]{Proposition}
\newtheorem{conj}[thm]{Conjecture}
\theoremstyle{definition}
\newtheorem{rem}[thm]{Remark}
\newtheorem{ex}[thm]{Example}
\newtheorem*{thmm}{Theorem}
\theoremstyle{definition}
\newtheorem{defn}[thm]{Definition}
\def\makeautorefname#1#2{\expandafter\def\csname#1autorefname\endcsname{#2}}
\DeclareMathOperator{\Id}{Id}
\DeclareMathOperator{\Hom}{Hom}
\DeclareMathOperator{\End}{End}
\DeclareMathOperator{\Mat}{Mat}
\DeclareMathOperator{\pos}{pos}
\DeclareMathOperator{\YH}{YH}
\DeclareMathOperator{\Hec}{H}
\DeclareMathOperator{\TL}{TL}
\DeclareMathOperator{\FTL}{FTL}
\DeclareMathOperator{\CTL}{CTL}
\DeclareMathOperator{\BMW}{BMW}
\DeclareMathOperator{\FBMW}{FBMW}
\DeclareMathOperator{\TB}{TB}
\DeclareMathOperator{\BT}{BT}
\definecolor{myblue}{rgb}{0,.5,1}
\definecolor{mygreen}{rgb}{.3,.75,.1}
\newcommand{\nnfootnote}[1]{%
\begin{NoHyper}
\renewcommand\thefootnote{}\footnote{#1}%
\addtocounter{footnote}{-1}%
\end{NoHyper}
}
\title{Framization of Schur--Weyl duality and Yokonuma--Hecke type algebras}
\author{Abel Lacabanne}
\address{A.L.: Laboratoire de Math{\'e}matiques Blaise Pascal (UMR 6620), Universit{\'e} Clermont Auvergne, Campus Universitaire des C{\'e}zeaux, 3 place Vasarely, 63178 Aubi{\`e}re Cedex, France,\newline \href{http://www.normalesup.org/~lacabanne}{www.normalesup.org/$\sim$lacabanne}}
\email{abel.lacabanne@uca.fr}
\author{Lo\"ic Poulain d'Andecy}
\address{L.P.: Laboratoire de Math{\'e}matiques de Reims (UMR 9008), Universit{\'e} de Reims Champagne-Ardenne, Moulin de la Housse - BP 1039, 51100 Reims, France,\newline \href{http://poulain.perso.math.cnrs.fr/}{http://poulain.perso.math.cnrs.fr/}}
\email{loic.poulain-dandecy@univ-reims.fr}
\begin{document}

\begin{abstract}
We study framizations of algebras through the idea of Schur--Weyl duality. We provide a general setting in which framizations of algebras such as the Yokonuma--Hecke algebra naturally appear and we obtain this way a Schur--Weyl duality for many examples of these algebras which were introduced in the study of knots and links. We thereby provide an interpretation of these algebras from the point of view of representations of quantum groups. In this approach the usual braid groups is replaced by the framed braid groups. This gives a natural procedure to construct framizations of algebras and we discuss in particular a new framized version of the Birman--Murakami--Wenzl algebra. The general setting is also extended to encompass the situation where the usual braid group is replaced by the so-called tied braids algebra, and this allows to collect in our approach even more examples of algebras introduced in the knots and links setting.
\end{abstract}

\nnfootnote{\textit{Mathematics Subject Classification 2020.} 20C08, 17B37, 20F36.}
\nnfootnote{\textit{Keywords.} Schur--Weyl duality, Yokonuma--Hecke algebras, framization, framed braid group, tied braid algebra.}

\maketitle
% \begin{center}
% \textcolor{red}{\textsc{\textbf{Preliminary Version}}}: %\\[2ex]
% \textcolor{red}{v.0 -\today \ \  (\currenttime) \\ \bigskip }
% \end{center}

%Temporary TOC
\setcounter{tocdepth}{2}
\tableofcontents

%%%%%%%%%%%%%%%%%%%%%%
%      Sections      %
%%%%%%%%%%%%%%%%%%%%%%

\section{Introduction}

The Yokonuma--Hecke algebra is a natural generalization of the usual Hecke algebra in the sense that the Hecke algebra originally appeared in the study of the permutation representation of $GL_n(F_q)$ with respect to its Borel subgroup, while the Yokonuma--Hecke algebra plays a similar role when replacing the Borel subgroup by its unipotent radical.

The Yokonuma--Hecke algebra was also studied from the point of view of knots and links, and was used to produce invariants \cite{Juy,JuLa1} generalizing the well-known construction of the Jones and HOMFLY--PT polynomials from the usual Hecke algebra. The precise topological meaning of these invariants in relation with the HOMFLY--PT polynomial was then elucidated \cite{CJKL,PAW}. In this context, the Yokonuma--Hecke algebra is referred to as a ``framization'' of the usual Hecke algebra, and there were subsequent attempts to ``framize'' other known algebras and use them for knot theory \cite{Chl,CPA2,FJL,Gou,GJKL,JuLa}. These framizations are usually defined via generators and relations and it is not always clear what the correct definition should be (\emph{e.g.} the different versions for the Temperley--Lieb algebras \cite{Gou}).

The usual Hecke algebra also appears in another famous context, which is the quantum Schur--Weyl duality, relating it to the representation theory of the quantum groups $U_q(\mathfrak{gl}_N)$ \cite{Jim,Res}. The Schur--Weyl duality completes the picture in which the Jones and HOMFLY--PT polynomials are seen as particular cases of Reshetikhin--Turaev invariants associated to quantum groups. Thus a similar interpretation of the Yokonuma--Hecke algebra in a Schur--Weyl duality with quantum groups seems desirable and natural to expect.

\vskip .2cm
The first goal of this paper is to prove a Schur--Weyl duality statement for the Yokonuma--Hecke algebra, as well as for various related algebras such as framizations of the Temperley--Lieb algebra, and the so-called algebra of braids and ties \cite{AJ-BT,RH11}. We will thus find the precise meaning of all these algebras from the point of view of quantum groups.

The usual Hecke algebra can be seen as a deformation of the group algebra of the symmetric group, and the Yokonuma--Hecke algebra has a similar interpretation in terms of a deformation of the wreath product $(\mathbb{Z}/d\mathbb{Z})^n\rtimes \mathfrak{S}_n$, also known as the complex reflection group $G(d,1,n)$. A Schur--Weyl duality context for this group exists   and a quantization of this duality can be obtained in terms of the Ariki--Koike algebra, which is another deformation of the group $G(d,1,n)$, and in terms of the quantum group $U_q(\mathfrak{gl}_{N_1} \oplus \dots\oplus \mathfrak{gl}_{N_d})$, see \cite{MaSt,SaSh} and references therein. We prove that a Schur--Weyl duality with $U_q(\mathfrak{gl}_{N_1} \oplus \dots\oplus \mathfrak{gl}_{N_d})$ also applies to the Yokonuma--Hecke algebra instead of the Ariki--Koike algebra. Nonetheless, we find that the extension of the Schur--Weyl duality of the Hecke algebra to the Yokonuma--Hecke algebra is easier and more natural than for the Ariki--Koike algebra, and is in fact a particular case of a general procedure developed in this paper. Here, the action of a braid group that factors through the Hecke algebra is replaced by an action of the framed braid group that naturally factors through the Yokonuma--Hecke algebra. 

\vskip .2cm
The second goal of the paper is to provide a general procedure to construct framizations of algebras from the point of view of the Schur--Weyl duality. This general procedure culminates in \cref{thm:action-framed} that contains in particular the following statement.
\begin{thmm}
 We have a morphism of algebras $\Phi\ :\ \Bbbk\mathcal{FB}_{d,n}\rightarrow\End_A(V^{\otimes n})$.
\end{thmm}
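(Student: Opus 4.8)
The plan is to use the semidirect product decomposition $\mathcal{FB}_{d,n}\cong(\mathbb{Z}/d\mathbb{Z})^n\rtimes B_n$, where $B_n$ is the braid group on $n$ strands with generators $\sigma_1,\dots,\sigma_{n-1}$ and $(\mathbb{Z}/d\mathbb{Z})^n$ is generated by commuting framing elements $t_1,\dots,t_n$ of order $d$, subject to the conjugation relations $\sigma_it_i\sigma_i^{-1}=t_{i+1}$, $\sigma_it_{i+1}\sigma_i^{-1}=t_i$ and $\sigma_it_j=t_j\sigma_i$ for $j\notin\{i,i+1\}$. Since $\Bbbk\mathcal{FB}_{d,n}$ is the group algebra of $\mathcal{FB}_{d,n}$, it is enough to send each generator to an invertible element of $\End_A(V^{\otimes n})$ so that every defining relation of the group is respected; then $\Phi$ extends uniquely and multiplicatively. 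I would set $\Phi(\sigma_i)=\mathrm{id}_V^{\otimes(i-1)}\otimes c\otimes\mathrm{id}_V^{\otimes(n-i-1)}$, where $c\colon V\otimes V\to V\otimes V$ is the (invertible) braiding coming from the quasi-triangular structure on $A$, and $\Phi(t_i)=\mathrm{id}_V^{\otimes(i-1)}\otimes\gamma\otimes\mathrm{id}_V^{\otimes(n-i)}$, where $\gamma\colon V\to V$ acts by $\zeta^k$ on the $k$-th homogeneous component of the $\mathbb{Z}/d\mathbb{Z}$-grading $V=\bigoplus_kV_k$, with $\zeta$ a fixed primitive $d$-th root of unity; note $\gamma$ is invertible since $\gamma^d=\mathrm{id}_V$.

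First I would verify that the images lie in the centraliser $\End_A(V^{\otimes n})$. For $\Phi(\sigma_i)$ this is the standard fact that the braiding attached to the $R$-matrix is a morphism of $A$-modules, whence so is its extension acting on two adjacent tensor factors. For $\Phi(t_i)$ it suffices that $\gamma$ itself is $A$-linear, and this holds because the grading $V=\bigoplus_kV_k$ is a decomposition into $A$-submodules (in the motivating case, the isotypic pieces of the vector representation of $U_q(\mathfrak{gl}_{N_1}\oplus\dots\oplus\mathfrak{gl}_{N_d})$), so the operator $\gamma$, being scalar on each $V_k$, commutes with the $A$-action.

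Next I would check the relations. The braid relations for the $\Phi(\sigma_i)$ are the usual consequence of the Yang--Baxter equation for $c$ together with the disjointness of supports when $|i-j|\geq 2$, exactly as in ordinary quantum Schur--Weyl duality. The relations $\Phi(t_i)^d=\mathrm{id}$ and $\Phi(t_i)\Phi(t_j)=\Phi(t_j)\Phi(t_i)$ follow at once from $\gamma^d=\mathrm{id}_V$ and from the fact that the $\Phi(t_i)$ act on distinct factors.

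The heart of the matter, and the step I expect to require the most care, is the compatibility of braiding and grading encoded in the conjugation relations. Everything reduces to the two identities $c\circ(\gamma\otimes\mathrm{id}_V)=(\mathrm{id}_V\otimes\gamma)\circ c$ and $c\circ(\mathrm{id}_V\otimes\gamma)=(\gamma\otimes\mathrm{id}_V)\circ c$ on $V\otimes V$. The key input is that the braiding shifts the grading, i.e. $c(V_k\otimes V_l)\subseteq V_l\otimes V_k$; this is itself a consequence of the naturality of $c$ with respect to the inclusions $V_k\hookrightarrow V$ and $V_l\hookrightarrow V$, once one knows the grading is by $A$-submodules. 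Granting this, both sides of the first identity act on $V_k\otimes V_l$ as $\zeta^k$ times $c$, and both sides of the second as $\zeta^l$ times $c$, so the identities hold componentwise. These translate directly into $\Phi(\sigma_i)\Phi(t_i)=\Phi(t_{i+1})\Phi(\sigma_i)$ and $\Phi(\sigma_i)\Phi(t_{i+1})=\Phi(t_i)\Phi(\sigma_i)$, while the remaining commutations $\Phi(\sigma_i)\Phi(t_j)=\Phi(t_j)\Phi(\sigma_i)$ for $j\notin\{i,i+1\}$ are clear from disjoint supports. All defining relations being verified, $\Phi$ extends uniquely to the desired algebra morphism $\Bbbk\mathcal{FB}_{d,n}\to\End_A(V^{\otimes n})$.
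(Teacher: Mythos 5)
Your construction is correct in the paper's main example, but as written it proves a weaker statement than the one intended. The only hypothesis in force here is that each $A_b$ is a bialgebra equipped with an operator $\check{R}^{(b)}\in\End(V_b\otimes V_b)$ whose translates $\check{R}^{(b)}_i$ satisfy the braid relation inside $\End_{A_b}(V_b^{\otimes n})$; quasi-triangularity is offered only as an \emph{example} producing such operators. Your definition $\Phi(s_i)=\mathrm{id}_V^{\otimes(i-1)}\otimes c\otimes\mathrm{id}_V^{\otimes(n-i-1)}$ presupposes a global braiding $c$ on $V\otimes V$ coming from a quasi-triangular structure on $A$, and your two key verifications --- the braid relation ``as a consequence of the Yang--Baxter equation for $c$'' and the grading shift $c(V_k\otimes V_l)\subseteq V_l\otimes V_k$ ``by naturality'' --- both lean on $c$ being a natural braiding for $A$-modules. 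In the stated generality no such $c$ exists: one must \emph{define} the operator on $V\otimes V$ by hand, as the paper does, taking it to be $\check{R}^{(b)}$ on each diagonal block $V_b\otimes V_b$ and the plain flip on each off-diagonal block $V_b\otimes V_c$, $b\neq c$. (In the quasi-triangular example these definitions agree, because $(\mathrm{id}\otimes\epsilon)(R)=1$ forces the braiding to reduce to the flip on $V_b\otimes V_c$ for $b\neq c$; so your map is the right one, but the justification does not transfer.)

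Two verifications then genuinely need to be redone without appealing to naturality. First, $A$-linearity of $\Phi(s_i)$ on the off-diagonal blocks: the flip $V_b\otimes V_c\to V_c\otimes V_b$ is an isomorphism of $A$-modules because, under the identification $V_b=\mathbf{1}_{A_1}\boxtimes\cdots\boxtimes V_b\boxtimes\cdots\boxtimes\mathbf{1}_{A_d}$, distinct tensor factors of $A$ act nontrivially on the two legs. Second, the braid relation for $\sigma_i,\sigma_{i+1}$ must be checked on each summand $V_{a_i}\otimes V_{a_{i+1}}\otimes V_{a_{i+2}}$ by cases according to which of $a_i,a_{i+1},a_{i+2}$ coincide: three equal indices is exactly the hypothesis on $\check{R}^{(b)}$, three distinct indices is the braid relation for flips, and the mixed cases amount to identities such as $P_{12}P_{23}\check{R}^{(b)}_{12}=\check{R}^{(b)}_{23}P_{12}P_{23}$ (with $P$ the flip), i.e.\ conjugation of $\check{R}^{(b)}$ by permutation operators --- not an instance of any Yang--Baxter equation. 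The relations involving the $t_i$ are handled exactly as you do (your $\gamma$ is the paper's $\tau$, and the grading shift of $\sigma$ holds by construction), so once $\sigma$ is defined intrinsically and these case checks are supplied, your argument closes up and coincides with the paper's proof.
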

Here the space $V=V_1\oplus \dots \oplus V_d$ is a direct sum where each $V_b$ is a representation of a bialgebra $A_b$ with the property that the usual braid group acts on the tensor products $V_b^{\otimes k}$ and centralizes the action of $A_b$. Of course the main example we have in mind is when the algebra $A_b$ is a quantum group acting on a finite-dimensional representation $V_b$. The algebras in Schur--Weyl duality in the theorem are then the tensor product $A=A_1\boxtimes\dots\boxtimes A_d$ and the group algebra $\Bbbk\mathcal{FB}_{d,n}$ of the framed braid group. The particular case factoring through the Yokonuma--Hecke algebra is found when $A_b=U_q(\mathfrak{gl}_{N_b})$ and $V_b$ is the vector representation of dimension $N_b$, so that $V$ is the natural vector representation of dimension $N_1+\dots+N_d$ of $A=U_q(\mathfrak{gl}_{N_1} \oplus \dots\oplus \mathfrak{gl}_{N_d})$.

In this approach, the Yokonuma--Hecke algebra is naturally obtained and we recover as well the natural framization of the Temperley--Lieb algebra. This also allows us to give a natural definition of a framization of the Birman--Murakami--Wenzl algebra, which seems to be new. Any algebra appearing in a Schur--Weyl duality can be framized following this procedure. The one-boundary extension of the previous theorem is also proved, involving the framed affine braid group, and is applied to affine versions of framizations of algebras, such as the affine and cyclotomic Yokonuma--Hecke algebras.

This procedure of framization is mostly used to construct invariants of knots and links, and sometimes several versions of a framization of an algebra are proposed. For example, there have been at least three tentatives of framization of the Temperley--Lieb algebra \cite{Gou}. With our procedure of framization, we find that the correct framization of the Temperley--Lieb algebra should be the Schur--Weyl dual of the quantum group $U_q(\mathfrak{gl}_{2} \oplus \dots\oplus \mathfrak{gl}_{2})$. One of the three proposed framizations is then natural to consider from our point of view, as it was also advocated in \cite{CP}, see also \cref{rem-TL}. 

Another advantage of the approach through the Schur--Weyl duality is that we recover naturally some isomorphism theorems for the framizations of algebras (see \cref{rem:iso_matrix} for example). In fact, we advocate the point of view that  the correct framization of a given algebra should be isomorphic to a direct sum of matrix algebras over tensor products of the algebra we started with. This comes up naturally in the approach through the Schur--Weyl duality and is also natural from the point of view of invariants of knots and links \cite{CP,JPA,PAW}. The representation theory of the framizations of algebras is also recovered from this point of view.

\vskip .2cm
Finally we also provide the general procedure allowing to obtain algebras related to braids and ties. One of our motivations was to explore and to generalize, from the Schur--Weyl duality setting, the relationship between the algebra of braids and ties and the fixed points of the Yokonuma--Hecke algebra under the action of a symmetric group. To do so in a general setting, we first upgrade this relationship to the level of the framed braid group. We define a natural action of the symmetric group on the group algebra of the framed braid group, and by taking the fixed points under this action we make the connection with the tied braid monoid of \cite{AiJ}.

We upgrade the Schur--Weyl duality previsouly obtained by adding a natural symmetry of the representations, so that the fixed point subalgebras for a natural action of the symmetric group on the framizations of algebras naturally enter the picture. The general procedure leads to the following braids and ties version of the preceding theorem (see \cref{thm:BT_centralizer}).
\begin{thmm}
 We have a morphism of algebras $\Psi\ :\ \TB_n \rightarrow \End_{A\rtimes \mathfrak{S}_d}(V^{\otimes n})$.
\end{thmm}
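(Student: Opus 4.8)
The plan is to realise $\Psi$ as the morphism $\Phi$ of the previous theorem precomposed with a morphism $\TB_n\to\Bbbk\mathcal{FB}_{d,n}$, and then to restrict its codomain to the $A\rtimes\mathfrak{S}_d$-centraliser. The starting point is the formal identification
\[
\End_{A\rtimes\mathfrak{S}_d}(V^{\otimes n})=\bigl(\End_A(V^{\otimes n})\bigr)^{\mathfrak{S}_d},
\]
in which $\mathfrak{S}_d$ permutes the (here identified) summands $V_1,\dots,V_d$ of $V$, acts diagonally on $V^{\otimes n}$, and acts on $\End_A(V^{\otimes n})$ by conjugation. Indeed, an endomorphism commutes with $A\rtimes\mathfrak{S}_d$ precisely when it commutes with $A$ and is $\mathfrak{S}_d$-equivariant; as $\mathfrak{S}_d$ normalises $A$, conjugation preserves $\End_A(V^{\otimes n})$ and the two conditions combine into the displayed fixed-point description. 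It therefore suffices to build a morphism $\TB_n\to\End_A(V^{\otimes n})$ whose image consists of $\mathfrak{S}_d$-fixed endomorphisms.

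First I would send the braid generators of $\TB_n$ to the braidings $\Phi(\sigma_i)$ and the tie generators to the symmetrising idempotents $\Phi(e_i)$, where $e_i=\tfrac1d\sum_{s=0}^{d-1}t_i^{s}t_{i+1}^{-s}\in\Bbbk\mathcal{FB}_{d,n}$. That these assignments respect the defining relations of $\TB_n$ reduces, through $\Phi$, to the same relations for $\sigma_i$ and $e_i$ inside $\Bbbk\mathcal{FB}_{d,n}$: the braid relations are immediate, $e_i^2=e_i$ and $e_ie_j=e_je_i$ are the standard properties of these idempotents, $\sigma_i$ commutes with $e_i$ because conjugation by $\sigma_i$ interchanges $t_i$ and $t_{i+1}$ and so fixes $e_i$, and the mixed braid--tie relations follow likewise from the way the braid generators permute the framing generators. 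This is exactly the morphism $\TB_n\to\Bbbk\mathcal{FB}_{d,n}$, with image in the symmetric part, that was set up in the preceding treatment of tied braids, so this step can be quoted rather than redone.

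Finally I would check that the images are $\mathfrak{S}_d$-fixed. Writing $V^{\otimes n}=\bigoplus_{\mathbf b}V_{b_1}\otimes\cdots\otimes V_{b_n}$ over colour-configurations $\mathbf b\in\{1,\dots,d\}^n$, the endomorphism $\Phi(\sigma_i)$ acts on each summand by the $A_{b_i}$-braiding when $b_i=b_{i+1}$ and by the flip when $b_i\neq b_{i+1}$; since this prescription is uniform in the colours, conjugation by a colour permutation leaves it unchanged, so $\Phi(\sigma_i)$ is $\mathfrak{S}_d$-fixed. Similarly $\Phi(e_i)$ is the projection onto $\bigoplus_{\mathbf b:\,b_i=b_{i+1}}V_{b_1}\otimes\cdots\otimes V_{b_n}$, and the condition $b_i=b_{i+1}$ is colour-symmetric, so this projection commutes with the $\mathfrak{S}_d$-action. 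Hence every generator of $\TB_n$ maps into $\bigl(\End_A(V^{\otimes n})\bigr)^{\mathfrak{S}_d}=\End_{A\rtimes\mathfrak{S}_d}(V^{\otimes n})$, yielding $\Psi$. The one genuinely delicate point is the $\mathfrak{S}_d$-equivariance of $\Phi(\sigma_i)$ on the off-diagonal colour blocks: one must ensure that the flip used when $b_i\neq b_{i+1}$ carries no colour-dependent normalisation, so that conjugating by a colour permutation truly returns the same operator. Granting the uniform construction of $\Phi$ supplied by the previous theorem, this is where the verification concentrates.
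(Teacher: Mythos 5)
Your proposal is correct and follows essentially the same route as the paper: factor $\Psi$ through the morphism $\TB_n\to\Bbbk[\mathcal{FB}_{d,n}]$ sending $\tilde{s}_i\mapsto s_i$, $\tilde{E}_i\mapsto E_i$ (already established in the tied-braid section), compose with $\Phi$, and use the identification $\End_{A\rtimes\mathfrak{S}_d}(V^{\otimes n})=\End_A(V^{\otimes n})^{\mathfrak{S}_d}$ together with the $\mathfrak{S}_d$-invariance of the images, which as you note rests on the uniformity of the braiding and the flip across colours (guaranteed here since all $A_b=A^{(0)}$, $V_b=V^{(0)}$ and all $\check{R}^{(b)}$ coincide). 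The only cosmetic difference is that the paper verifies full $\mathfrak{S}_d$-equivariance of $\Phi$ on the idempotents $E_I$ and then restricts to fixed points, whereas you check invariance of the generator images directly; both are valid.
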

Here the algebra $\TB_n$ is the tied braid algebra while $A$ and $V$ are as in the first theorem, with the additional assumption that all bialgebras $A_b$ are the same and all representations $V_b$ are the same. In this situation, the symmetric group $\mathfrak{S}_d$ naturally acts on $A$ and $V$ naturally becomes a representation of the smash product.

The algebraic description of the centralizers in these Schur--Weyl duality settings will thus provide braids and ties versions of well-known algebras. For example, we recover the algebra of braids and ties from the Yokonuma--Hecke algebra \cite{AJ-BT} and we recover a braids and ties version for the Temperley--Lieb algebra, also called partition Temperley--Lieb algebra \cite{Juy2} (see also \cite{AE} for related constructions). We also propose a definition by generators and relations of a BMW algebra of braids and ties, which seems natural in our approach and which seems different from the one in \cite{AiJ2}.

%Once again, thanks to the description through the Schur--Weyl duality, and with the help of Clifford theory, the representation theory is naturally recovered from the point of view of centralizers. For example, we obtain the indexation of \cite{RH11} of simple representations of the braid and ties algebra, as well as the dimension of these representations.

\vskip .2cm
Our goal is not to be exhaustive concerning the framizations of algebras and their braids and ties versions, but is more to provide a Schur--Weyl duality setting in which these framizations naturally appear. We give many examples, some of those are well known and many others deserve a more thorough study. We also note that the Yokonuma--Hecke algebra appears as a special case in \cite{LNX} where a Schur--Weyl duality statement different from ours is given.

\vskip .2cm
\paragraph{\textbf{Acknowledgements.}} LPA is supported by Agence Nationale de la Recherche Projet AHA ANR-18-CE40-0001 and the international research project AAPT of the CNRS. AL is supported by a PEPS JCJC grant from INSMI (CNRS). The authors thank Catharina Stroppel and Pedro Vaz for useful discussions.

\section{Algebraic preliminaries}
\label{sec:alg_prel}

Let us start with some short and easy algebraic lemmas. Let $\Bbbk$ be a field.

\subsection{External tensor products of algebras.}
\label{sec:cent_tensor}

Let $d\in \mathbb{N}^*$ . For all $1 \leq b \leq d$, let $A_b$ be a unital $\Bbbk$-bialgebra. Recall that this means in particular that we can make tensor products of $A_b$-modules and that we have a notion of a trivial representation $\epsilon_b\ \colon\ A_b\to\Bbbk$, that we denote $\textbf{1}_{A_b}$, and which satisfies $\textbf{1}_{A_b}\otimes V_b\cong V_b\otimes \textbf{1}_{A_b}\cong V_b$ for any $A_b$-module $V_b$, where the isomorphisms are given by the trivial identity map.

We consider the algebra 
\[
A=A_1\boxtimes \cdots \boxtimes A_d\ .
\]
As a vector space, this is the usual tensor product, and the multiplication is performed independently in each factor for pure tensors and extended linearly. Given $A_b$-modules $W_b$ for $b=1,\dots,d$, the tensor product becomes naturally a representation of $A$ that we denote $W_1\boxtimes \cdots \boxtimes W_d$. The tensor product of two such representations of $A$ is defined by performing the tensor product of $A_b$-modules in each factor.

Now we fix an $A_b$-module $V_b$ for each $b=1,\dots,d$. We see it as an $A$-module, namely, we make the following identification:
\begin{equation}\label{identificationVb}
  V_b=\textbf{1}_{A_1}\boxtimes \dots \boxtimes \textbf{1}_{A_{b-1}}\boxtimes V_b \boxtimes \textbf{1}_{A_{b+1}}\boxtimes\dots \boxtimes \textbf{1}_{A_{d}}\ .
\end{equation}
Finally, we define the following $A$-module:
\[
  V=V_1\oplus \dots \oplus V_d\ .
\]
Explicitly, the action of an element $a_1\otimes \cdots \otimes a_d$ in $A$ is given by
\[
  a_1\otimes \cdots \otimes a_d \cdot v = \left(\prod_{c\neq b}\epsilon_{c}(a_{c})\right) a_b\cdot v\,,\quad\forall v\in V_b\ \text{and}\ b=1,\dots,d\ .
\]
We have the following relation between the centralizer of $A$ in $V^{\otimes n}$ and the centralizers of the various $A_b$. 

\begin{lem}
  \label{lem:iso-matrix-alg}
  Suppose that for all $1 \leq b \leq d$ and $r\neq s$ we have $\Hom_{A_b}(V_b^{\otimes r},V_b^{\otimes s})=0$. Then we have
  \begin{equation}\label{iso-matrix-alg}
    \End_{A}(V^{\otimes n}) \simeq \bigoplus_{\nu \vDash_d n}\Mat_{\binom{n}{\nu}}\Bigl(\End_{A_1}(V_1^{\otimes \nu_1})\otimes \cdots \otimes \End_{A_d}(V_d^{\otimes \nu_d})\Bigr)\,,
  \end{equation}
  the sum being taken over all $d$-compositions $\nu=(\nu_1,\ldots,\nu_d)$ of $n$.
\end{lem}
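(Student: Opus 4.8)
The plan is to decompose $V^{\otimes n}$ as an $A$-module into summands indexed by words in the alphabet $\{1,\dots,d\}$, identify each summand as an external tensor product, and then read off the morphism spaces using the vanishing hypothesis. First I would expand, using $V=V_1\oplus\cdots\oplus V_d$ and distributivity of $\otimes$ over $\oplus$,
\[
V^{\otimes n}=\bigoplus_{\mathbf{b}=(b_1,\dots,b_n)\in\{1,\dots,d\}^n} V_{b_1}\otimes\cdots\otimes V_{b_n}\,.
\]
The next step is to identify each summand as an $A$-module. Since the coproduct of $A$ is the product of the coproducts of the $A_b$, the factor $A_b$ acts on $V_{b_1}\otimes\cdots\otimes V_{b_n}$ through its iterated coproduct on the tensorands; by the identification \eqref{identificationVb} it acts trivially on every $V_{b_i}$ with $b_i\neq b$, and these trivial factors are absorbed thanks to $\textbf{1}_{A_b}\otimes W\cong W$. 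Writing $\nu_b=\#\{i : b_i=b\}$ for the content of $\mathbf{b}$, this gives an isomorphism of $A$-modules
\[
V_{b_1}\otimes\cdots\otimes V_{b_n}\cong V_1^{\otimes\nu_1}\boxtimes\cdots\boxtimes V_d^{\otimes\nu_d}=:M_\nu\,,
\]
depending only on the $d$-composition $\nu=(\nu_1,\dots,\nu_d)\vDash_d n$.

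I would then compute the morphisms between two such summands. Using the identification of morphisms between external tensor products with the tensor product of morphism spaces,
\[
\Hom_A(M_\nu,M_{\nu'})\cong\Hom_{A_1}(V_1^{\otimes\nu_1},V_1^{\otimes\nu'_1})\otimes\cdots\otimes\Hom_{A_d}(V_d^{\otimes\nu_d},V_d^{\otimes\nu'_d})\,,
\]
the hypothesis $\Hom_{A_b}(V_b^{\otimes r},V_b^{\otimes s})=0$ for $r\neq s$ forces this space to vanish unless $\nu_b=\nu'_b$ for all $b$, that is, unless $\nu=\nu'$. Hence $\End_A(V^{\otimes n})$ is block-diagonal with blocks indexed by the $d$-compositions $\nu\vDash_d n$, and no morphisms connect different blocks.

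Finally, for a fixed $\nu$ the summands of content $\nu$ are precisely the $\binom{n}{\nu}$ rearrangements of a fixed word, all isomorphic to $M_\nu$. The corresponding block is therefore $\End_A\bigl(M_\nu^{\oplus\binom{n}{\nu}}\bigr)\cong\Mat_{\binom{n}{\nu}}\bigl(\End_A(M_\nu)\bigr)$, with $\End_A(M_\nu)\cong\End_{A_1}(V_1^{\otimes\nu_1})\otimes\cdots\otimes\End_{A_d}(V_d^{\otimes\nu_d})$ by the same external-tensor-product identity in the case $\nu=\nu'$; summing over $\nu$ yields \eqref{iso-matrix-alg}, and the matrix-and-tensor structure automatically matches the multiplication (composition of endomorphisms) block-wise and factor-wise, so the isomorphism is one of algebras. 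The one point deserving care, and the main technical input, is the identification of $\Hom$ between external tensor products with the tensor product of the individual $\Hom$-spaces; I would isolate it as a preliminary observation, noting that it holds because the relevant modules $V_b^{\otimes k}$ are finite-dimensional, so that the canonical injection from the tensor product of $\Hom$-spaces is in fact surjective.
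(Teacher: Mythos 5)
Your proof is correct and follows essentially the same route as the paper's: decompose $V^{\otimes n}$ into summands indexed by words in $\{1,\dots,d\}^n$, identify each summand with the external tensor product $V_1^{\otimes\nu_1}\boxtimes\cdots\boxtimes V_d^{\otimes\nu_d}$, and invoke the identification of $\Hom_A$ between external tensor products with the tensor product of the individual $\Hom_{A_b}$-spaces together with the vanishing hypothesis to get the block-diagonal matrix description. Your closing remark that this $\Hom$ identification needs a finiteness hypothesis for surjectivity of the canonical map is a point the paper glosses over, so if anything you are slightly more careful.
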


The multinomial coefficients appearing as sizes of the matrix algebras are:
\[
  \binom{n}{\nu}=\frac{n!}{\nu_1!\dots \nu_d!}\ .
\]

\begin{proof}
  We have the following decomposition of the vector space $V^{\otimes n}$:
  \[
    V^{\otimes n}=\bigoplus_{a_1,\dots,a_n=1}^dV_{a_1}\otimes\dots\otimes V_{a_n}\ .
  \]
  Looking at \eqref{identificationVb}, we see that the summand $V_{a_1}\otimes\dots\otimes V_{a_n}$ is an $A$-submodule isomorphic to $V_1^{\otimes \nu_1}\boxtimes \cdots \boxtimes V_d^{\otimes \nu_d}$, where $\nu_b$ is the number of indices among $a_1,\dots,a_n$ which are equal to $b$. There are therefore $\binom{n}{\nu}$ summands corresponding to the composition $\nu$. Therefore we have the following decomposition of $V^{\otimes n}$ as an $A$-module:
  \begin{equation}\label{proof_dec}
    V^{\otimes n} \simeq \bigoplus_{\nu \vDash_d n} \left(V_1^{\otimes \nu_1}\boxtimes \cdots \boxtimes V_d^{\otimes \nu_d}\right)^{\oplus \binom{n}{\nu}}\ .
  \end{equation}
  The statement of the lemma follows from the general fact that given $A_b$-modules $W_b,W'_b$, we have
  \[
    \Hom_A(W_1\boxtimes \cdots \boxtimes W_d,W'_1\boxtimes \cdots \boxtimes W'_d)=\Hom_{A_1}(W_1,W'_1)\otimes \dots\otimes \Hom_{A_d}(W_d,W'_d)\ ,
  \]
  together with the hypothesis which implies that there is no homomorphism commuting with $A$ between summands corresponding to different compositions.
\end{proof}

\begin{rem}\label{rem-assumptionlemma}
  Without the assumption, the isomorphism in the lemma remains valid if we replace the full centralizer $\End_A(V^{\otimes n})$ by its subalgebra generated by the subspaces:
  \[
    \Hom_A(V_{a_1}\otimes\dots\otimes V_{a_d},V_{b_1}\otimes\dots\otimes V_{b_d})\ ,
  \]
  for $a$'s and $b$'s giving the same composition, namely such that $|\{i\ |\ a_i=x\}|=|\{i\ |\ b_i=x\}|$ for all $x=1,\dots,d$.
\end{rem}

\begin{rem}
  One can explicitly give the isomorphism of \cref{lem:iso-matrix-alg} using the idempotents $\pi_{\nu}$ corresponding to the projections on the summands of the decomposition $V^{\otimes n} \simeq \bigoplus_{\nu \vDash_d n} \left(V_1^{\otimes \nu_1}\boxtimes \cdots \boxtimes V_d^{\otimes \nu_d}\right)^{\oplus \binom{n}{\nu}}$. If $\pi_b\colon V \rightarrow V$ is the projection on the summand $V_b$ of $V$, the idempotent $\pi_\nu$ for $\nu\vDash_d n$ is given by
  \[
    \pi_\nu = \sum_{\substack{(b_1,\ldots,b_n)\in\{1,\ldots,d\}^n\\\lvert\{i\mid b_i=k\}\rvert = \nu_k}}\pi_{b_1}\otimes\cdots\otimes\pi_{b_n} \in\End_A(V^{\otimes n}).
  \]
The isomorphism of the lemma then sends $f\in\End_A(V^{\otimes n})$ to the family $(\pi_\nu f \pi_\nu)_{\nu\vDash_d n}$. The assumption of \cref{lem:iso-matrix-alg} indeed implies that if $\nu\neq \mu$ then $\pi_\nu f \pi_\mu = 0$.
\end{rem}

\subsubsection{Consequences.}\label{sec:consequences}

The isomorphism of the lemma implies a Morita equivalence between $\text{End}_A(V^{\otimes n})$ and the direct sum of the algebras inside the matrix algebras. In particular, the irreducible representations of the direct sum in the right hand side of \eqref{iso-matrix-alg} are indexed by
\[
  (\nu,\rho_1,\dots,\rho_d)\ \ \ \ \text{with $\nu \vDash_d n$ and $\rho_b\in\text{Irr}\bigl(\End_{A_b}(V_b^{\otimes \nu_b})\bigr)$,}
\]
the dimension of this representation being $\binom{n}{\nu}\prod_{b=1}^d\dim(\rho_b)$. The total dimension of the algebra is of course:
\[
  \sum_{\nu\vDash_d n}\binom{n}{\nu}^2d^{(1)}_{\nu_1}\dots d^{(d)}_{\nu_d}\,,\ \ \ \ \text{where }d^{(b)}_{\nu_b}=\dim\left(\End_{A_b}(V_b^{\otimes \nu_b})\right).
\]

\subsection{One-boundary extension} 

We keep the same context, and suppose that we are moreover given for all $1 \leq b \leq d$ an $A_b$-module $M_b$. To lighten the notations, we denote:
\[
  C^{(b)}_{n}=\End_{A_b}(M_b\otimes V_b^{\otimes n})\ .
\]
We consider the $A$-module $M=M_1\boxtimes \cdots\boxtimes M_d$ and we have the following one-boundary generalization of \cref{lem:iso-matrix-alg}.

\begin{lem}
  Suppose that for all $1 \leq b \leq d$ and $r\neq s$ we have $\Hom_{A_b}(M_b\otimes V_b^{\otimes r},M_b\otimes V_b^{\otimes s})=0$. Then we have
  \[
    \End_{A}(M\otimes V^{\otimes n}) \simeq \bigoplus_{\nu \vDash_d n}\Mat_{\binom{n}{\nu}}(C^{(1)}_{\nu_1}\otimes \cdots \otimes C^{(d)}_{\nu_d})\ ,
  \]
  the sum being taken over all $d$-compositions $\nu=(\nu_1,\ldots,\nu_n)$ of $n$.
\end{lem}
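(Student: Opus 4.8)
The plan is to mimic the proof of \cref{lem:iso-matrix-alg} almost verbatim, the only difference being that every tensor string now carries an extra ``boundary'' factor $M_b$ on the left of the $V_b$-tensors. First I would decompose the $A$-module $M\otimes V^{\otimes n}$ according to the multi-index $(a_1,\dots,a_n)\in\{1,\dots,d\}^n$ recording which summand $V_{a_i}$ of $V=V_1\oplus\dots\oplus V_d$ each tensor factor lives in:
\[
  M\otimes V^{\otimes n}=\bigoplus_{a_1,\dots,a_n=1}^{d}M\otimes V_{a_1}\otimes\dots\otimes V_{a_n}\ .
\]
Using the identification \eqref{identificationVb} and the fact that $M=M_1\boxtimes\cdots\boxtimes M_d$, the summand indexed by $(a_1,\dots,a_n)$ is an $A$-submodule isomorphic to
\[
  \bigl(M_1\otimes V_1^{\otimes\nu_1}\bigr)\boxtimes\cdots\boxtimes\bigl(M_d\otimes V_d^{\otimes\nu_d}\bigr)\,,
\]
where $\nu_b=|\{i\mid a_i=b\}|$. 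Here the key point is that the external factor $M_b$ sits in the $b$-th slot of the box product and simply collects, together with the $V_b^{\otimes\nu_b}$, all the tensorands carrying colour $b$; the $\textbf{1}_{A_c}$'s for $c\neq b$ absorb into the trivial representations as in \eqref{identificationVb}. As before there are exactly $\binom{n}{\nu}$ indices $(a_1,\dots,a_n)$ yielding a fixed composition $\nu$, so we obtain the $A$-module decomposition
\[
  M\otimes V^{\otimes n}\simeq\bigoplus_{\nu\vDash_d n}\Bigl(\bigl(M_1\otimes V_1^{\otimes\nu_1}\bigr)\boxtimes\cdots\boxtimes\bigl(M_d\otimes V_d^{\otimes\nu_d}\bigr)\Bigr)^{\oplus\binom{n}{\nu}}\ .
\]

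From here I would invoke the same two structural facts used in the proof of \cref{lem:iso-matrix-alg}. The first is the general identity for homomorphisms between external tensor products,
\[
  \Hom_A(W_1\boxtimes\cdots\boxtimes W_d,W'_1\boxtimes\cdots\boxtimes W'_d)=\Hom_{A_1}(W_1,W'_1)\otimes\cdots\otimes\Hom_{A_d}(W_d,W'_d)\,,
\]
applied with $W_b=M_b\otimes V_b^{\otimes\nu_b}$; since $C^{(b)}_{\nu_b}=\End_{A_b}(M_b\otimes V_b^{\otimes\nu_b})$ by definition, the endomorphism algebra of a single isotypic block for the composition $\nu$ is exactly $C^{(1)}_{\nu_1}\otimes\cdots\otimes C^{(d)}_{\nu_d}$. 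The second fact is that the new hypothesis, $\Hom_{A_b}(M_b\otimes V_b^{\otimes r},M_b\otimes V_b^{\otimes s})=0$ for $r\neq s$, guarantees that there are no $A$-homomorphisms between blocks attached to distinct compositions $\nu\neq\mu$: if $\nu\neq\mu$ then some colour $b$ has $\nu_b\neq\mu_b$, and the $b$-th tensor factor of the corresponding $\Hom$ space vanishes by assumption, killing the whole product. Taking the $\binom{n}{\nu}$-fold multiplicity into account, each such block contributes a matrix algebra $\Mat_{\binom{n}{\nu}}\bigl(C^{(1)}_{\nu_1}\otimes\cdots\otimes C^{(d)}_{\nu_d}\bigr)$, and summing over all $\nu\vDash_d n$ yields the claimed isomorphism.

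I do not expect any genuine obstacle here, since the argument is a direct transcription of the proof of \cref{lem:iso-matrix-alg} with $V_b^{\otimes\nu_b}$ replaced throughout by $M_b\otimes V_b^{\otimes\nu_b}$; the boundary modules $M_b$ play an entirely passive role, being appended once and for all to the leftmost slot and never mixing between colours. The only mild point requiring care is the bookkeeping in the decomposition: one must check that the single boundary module $M$ distributes correctly across the box product so that each colour $b$ receives its own $M_b$ exactly once. This is immediate from $M=M_1\boxtimes\cdots\boxtimes M_d$ and the identification \eqref{identificationVb}, but it is worth stating explicitly to make clear why a \emph{single} copy of $M$ produces \emph{one} factor $M_b$ in each slot rather than interacting with the multiplicities. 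With that observation in place the proof is complete.
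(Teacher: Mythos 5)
Your proposal is correct and follows exactly the route the paper takes: the paper's proof is a one-line reduction to the decomposition \eqref{proof_dec} placed inside $M\otimes V^{\otimes n}$ together with the argument of \cref{lem:iso-matrix-alg}, which is precisely what you carry out (in more explicit detail). The extra bookkeeping you include, checking that the single boundary module $M=M_1\boxtimes\cdots\boxtimes M_d$ distributes one factor $M_b$ into each slot of the box product, is exactly the point the paper leaves implicit.
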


\begin{proof}
  We use the decomposition \eqref{proof_dec} of $V^{\otimes n}$ in $M\otimes V^{\otimes n}$, and conclude with the same argument as in the proof of \cref{lem:iso-matrix-alg}.
\end{proof}

Similar consequences as in \cref{sec:consequences} can be deduced.

\subsection{Centralizer and group action}
\label{sec:dble_grp}

Let $A$ be a unital $\Bbbk$-algebra. Suppose that we are given a finite group $G$ that acts by algebra automorphisms on $A$. We denote the action $G\times A \rightarrow A$ by $(g,a) \mapsto {}^ga$. We can now define the smash product algebra $A\rtimes G$ of $A$ and $\Bbbk[G]$ as the $\Bbbk$-vector space $A\otimes_{\Bbbk} \Bbbk[G]$ equipped with the following multiplication:
\[
  (a\otimes g) \cdot (b\otimes h) = a\,{}^gb\otimes gh,\ \quad\forall a,b\in A\,,\ \forall g,h\in G.
\]
We will often identify $a\in A$ with $a\otimes 1\in A\rtimes G$ and $g\in G$ with $1\otimes g \in A\rtimes G$.

We now fix an $A\rtimes G$-module $W$. The centralizer algebra $\End_{A}(W)$ inherits of an action of $G$ by conjugation. Namely, this action is defined, for $g\in G$ and $\phi\in \End_{A}(W)$, by
\[
  (g\cdot \phi)(v) = g\cdot \phi(g^{-1}\cdot v),\quad \forall v\in W.
\]
Indeed, we have that $g\cdot \phi\in \End_{A}(W)$ since
\[
  (g\cdot \phi)(a\cdot v)=g\cdot \phi(g^{-1}a\cdot v)=g\cdot \phi({}^{g^{-1}}\!\!\!\!\!a\,g^{-1}\cdot v)=g\,{}^{g^{-1}}\!\!\!\!\!a\cdot \phi(g^{-1}\cdot v)=a\,g\cdot \phi(g^{-1}\cdot v)=a\cdot (g\cdot \phi)(v)\ .
\]

\begin{lem}~
  \label{lem:centralizer_semi}
  \begin{enumerate}[label=(\roman*)]
  \item The centralizer algebra $\End_{A\rtimes G}(W)$ of $A\rtimes G$ is the fixed point subalgebra of $\End_A(W)$: 
    \[
      \End_{A\rtimes G}(W) = \End_A(W)^{G}\ .
    \]
  \item Assume that $\lvert G \rvert$ is invertible in $\Bbbk$. Let $X$ be an algebra equipped with an action of $G$ by algebra automorphisms. If we have a surjective algebra morphism 
    \[
      \varphi\ : X \rightarrow \End_A(W)\,,
    \]
    commuting with the action of $G$, then $\varphi$ restricts to a surjective algebra morphism
    \[
      \varphi\ :\ X^G \rightarrow \End_A(W)^G\ .
    \]
  \end{enumerate}
\end{lem}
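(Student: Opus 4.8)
The plan is to prove (i) by directly unwinding the definitions, and (ii) via the averaging (Reynolds) operator afforded by the invertibility of $\lvert G\rvert$.

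For (i), I would first observe that an endomorphism $\phi$ of $W$ lies in $\End_{A\rtimes G}(W)$ precisely when it commutes with both the $A$-action and the $G$-action, since $A$ and $G$ together generate $A\rtimes G$ as an algebra. It then remains to identify, among $\phi\in\End_A(W)$, those commuting with the $G$-action as exactly the $G$-fixed points for the conjugation action. Spelling this out: $\phi$ is $G$-fixed means $g\cdot\phi=\phi$ for all $g\in G$, that is $g\cdot\phi(g^{-1}\cdot v)=\phi(v)$ for all $v\in W$; substituting $v\mapsto g\cdot v$ this reads $\phi(g\cdot v)=g\cdot\phi(v)$, which is exactly the condition that $\phi$ commutes with the $G$-action. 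This yields $\End_{A\rtimes G}(W)=\End_A(W)^{G}$.

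For (ii), the key tool is the averaging projector $e=\frac{1}{\lvert G\rvert}\sum_{g\in G}g$, well-defined since $\lvert G\rvert$ is invertible in $\Bbbk$; for any $\Bbbk$-linear representation $M$ of $G$ it defines a linear projection onto $M^{G}$. First I would check that $\varphi$ restricts to a map $X^{G}\to\End_A(W)^{G}$: if $x\in X^{G}$ then $g\cdot\varphi(x)=\varphi({}^gx)=\varphi(x)$ by $G$-equivariance, so $\varphi(x)\in\End_A(W)^{G}$. For surjectivity, I take $y\in\End_A(W)^{G}$ and, using surjectivity of $\varphi$, pick $x\in X$ with $\varphi(x)=y$. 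Applying averaging on both $X$ and $\End_A(W)$ and using that $\varphi$ commutes with the $G$-action, hence with averaging, I obtain
\[
\varphi\Bigl(\tfrac{1}{\lvert G\rvert}\sum_{g\in G}{}^gx\Bigr)=\tfrac{1}{\lvert G\rvert}\sum_{g\in G}g\cdot\varphi(x)=\tfrac{1}{\lvert G\rvert}\sum_{g\in G}g\cdot y=y,
\]
the last equality because $y$ is already $G$-fixed. Since $\tfrac{1}{\lvert G\rvert}\sum_{g\in G}{}^gx\in X^{G}$, this element is the desired preimage in $X^{G}$.

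I expect no serious obstacle: part (i) is a formal computation and part (ii) is the standard averaging argument. The only point requiring genuine care is the hypothesis that $\lvert G\rvert$ is invertible, which is exactly what makes $e$ a projection onto the fixed points and guarantees that a preimage of a $G$-fixed element can be chosen $G$-invariant; without it, the surjectivity of the restricted map can fail.
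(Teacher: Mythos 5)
Your proof is correct and follows essentially the same route as the paper's: part (i) by observing that centralizing $A\rtimes G$ means centralizing $A$ and $G$ separately and that commuting with $G$ is the fixed-point condition for the conjugation action, and part (ii) by the standard averaging of a preimage using the invertibility of $\lvert G\rvert$. You merely spell out the computations in slightly more detail than the paper does.
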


\begin{proof}
  For the first item, note that $W$, being an $A\rtimes G$-module, is in particular both an $A$-module and a $G$-module, and that centralizing the action of $A\rtimes G$ is equivalent to centralizing both actions of $A$ and $G$. Then it is immediate that for an element in $\End_A(W)$, centralizing the action of $G$ is equivalent to being in $\End_A(W)^G$.
  
  For the second item, since $\varphi$ commutes with the action of $G$, it is clear that $X^G$ is sent to the fixed points $\End_A(W)^G$. To check the surjectivity, take an element $y$ in $\End_A(W)^G$. From the surjectivity of $\varphi$, there is an element $x$ in $X$ such that $\varphi(x)=y$. Now taking the average $\frac{1}{|G|}\sum_{g\in G}g.x$, it is straightforward to see that this is an element in $X^G$ which is sent to $y$.
\end{proof}

\subsubsection{Main example}

We take the general setting of \cref{sec:cent_tensor} with the additional assumption that all bialgebras $A_1,\dots,A_d$ are equal to one and the same bialgebra $A^{(0)}$, and all modules $V_1,\dots,V_d$ are equal to one and the same $A^{(0)}$-module $V^{(0)}$: 
\[
  A=A^{(0)}\boxtimes \cdots \boxtimes A^{(0)}\ \ \ \ \text{and}\ \ \ \ V_1=\dots=V_d=V^{(0)}\ .
\]
In this case, the algebra $A$ is equipped naturally with an action of the symmetric group $\mathfrak{S}_d$, obtained by permuting the tensorands:
\[
  \sigma \cdot (x_1\otimes \cdots \otimes x_d) = x_{\sigma^{-1}(1)}\otimes \cdots \otimes x_{\sigma^{-1}(d)}\ \ \ \forall \sigma \in \mathfrak{S}_d\,,\ \ \forall x_1,\ldots,x_d\in A^{(0)}\ .
\]
Therefore we consider the algebra $A\rtimes \mathfrak{S}_d$ defined by this action. 

There is also a natural action of $\mathfrak{S}_d$ on $V$ by permuting the $d$ summands. We extend diagonally this $\mathfrak{S}_d$-action to $V^{\otimes n}$. Explicitly, the resulting action of $\sigma\in\mathfrak{S}_d$ permutes the summands of $V^{\otimes n}$ as follows:
\[
  V^{\otimes n}=\!\!\!\bigoplus_{a_1,\dots,a_n=1}^d\!\!\!V_{a_1}\otimes\dots\otimes V_{a_n}\ \ \ \text{and}\ \ \ \sigma\,:\,V_{a_1}\otimes\dots\otimes V_{a_n}\,\to\,V_{\sigma(a_1)}\otimes\dots\otimes V_{\sigma(a_n)}\,.
\]
Together with the action of $A$, this yields an action of $A\rtimes \mathfrak{S}_d$ on $V^{\otimes n}$. We recall from \cref{lem:centralizer_semi} that
\[
  \End_{A\rtimes \mathfrak{S}_d}(V^{\otimes n})= \End_A(V^{\otimes n})^{\mathfrak{S}_d}\ ,
\]
where the action of $\mathfrak{S}_d$ on $\End_A(V^{\otimes n})$ is by conjugating with the action of $\mathfrak{S}_d$ on $V^{\otimes n}$.

\begin{ex}
  Our main example will be $A^{(0)}=U_q(\mathfrak{gl}_N)$ so that 
  \[
    A=U_q(\mathfrak{gl}_N)\boxtimes \dots\boxtimes U_q(\mathfrak{gl}_N)\cong U_q(\mathfrak{gl}_N\oplus \dots\oplus \mathfrak{gl}_N)
  \]
  and the module $V^{(0)}$ is the standard vector representation $\mathbb{C}^N$, so that $V$ is the standard vector representation $\mathbb{C}^{N}\oplus\dots\oplus \mathbb{C}^N$%, the $q$-version of the standard vector representation of $\mathfrak{gl}_N\oplus \dots\oplus \mathfrak{gl}_N$
  by block-diagonal matrices.
\end{ex}

As before, we aim at a description of the centralizer $\End_{A\rtimes \mathfrak{S}_d}(V^{\otimes n})$ as a direct sum of matrix algebras (over tensor products of $A^{(0)}$-centralizers). To do this, we introduce some notations.

Given a $\Bbbk$-algebra $B$ and a finite group $G$, we define the algebra $\mathcal{C}_G(B)$ of $G$-circulant matrices with coefficients in $B$ by
\[
  \mathcal{C}_G(B)=\left\{\bigl(f(h^{-1}g)\bigr)_{g,h\in G}\ |\ f\,:\,G\to B\ \right\}\ .
\]
We see elements of $\mathcal{C}_G(B)$ as matrices of size $|G|$ with coefficients in $B$ by fixing an ordering of the elements of $G$.

\begin{rem}\label{rem:Gcirculant}
  The matrix $\bigl(f(h^{-1}g)\bigr)_{g,h\in G}$ in $\mathcal{C}_G(B)$ is the matrix with coefficients in $B$ representing the right multiplication of the element $\sum_g f(g)g$ on the group ring $B[G]$. Therefore $\mathcal{C}_G(B)$ is an algebra isomorphic to $B[G]\cong B\otimes_{\Bbbk} \Bbbk[G]$. Note that we recover the usual circulant matrices when $G$ is a cyclic group of finite order.
\end{rem}

We are ready to formulate the final result of this section. To lighten the notation, we denote by $B^{(0)}_k$ the endomorphism algebras $\End_{A^{(0)}}((V^{(0)})^{\otimes k})$.

\begin{lem}
  Suppose that for all $r\neq s$ we have $\Hom_{A^{(0)}}((V^{(0)})^{\otimes r},(V^{(0)})^{\otimes s})=0$. Then we have
  \begin{equation}\label{iso:matrix-fixedpoints}
    \End_{A\rtimes \mathfrak{S}_d}(V^{\otimes n}) \simeq \bigoplus_{\substack{\lambda\vdash n \\ l(\lambda)\leq d}} \Mat_{\binom{n}{\lambda}/l_1!\cdots l_n!}\left(\mathcal{C}_{\mathfrak{S}_{l_1}\times\dots\times \mathfrak{S}_{l_n}}\left(B^{(0)}_{\lambda_1}\otimes\cdots\otimes B^{(0)}_{\lambda_d}\right)\right),
  \end{equation}
  where a partition $\lambda$ in the sum is written as $(1^{l_1},2^{l_2},\dots,n^{l_n})$, that is, $l_i$ is the number of $i$ occurring in $\lambda$ and $l(\lambda)=l_1+\cdots+l_n$.
\end{lem}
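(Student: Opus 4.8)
The plan is to reduce everything to a fixed-point computation and then to trace through how $\mathfrak{S}_d$ permutes the matrix decomposition of \cref{lem:iso-matrix-alg}. First I would invoke \cref{lem:centralizer_semi}(i) to write $\End_{A\rtimes\mathfrak{S}_d}(V^{\otimes n})=\End_A(V^{\otimes n})^{\mathfrak{S}_d}$. The present hypothesis is exactly the hypothesis of \cref{lem:iso-matrix-alg} specialized to $A_b=A^{(0)}$, $V_b=V^{(0)}$, so I may feed in $\End_A(V^{\otimes n})\simeq\bigoplus_{\nu\vDash_d n}\Mat_{\binom{n}{\nu}}(B^{(0)}_{\nu_1}\otimes\cdots\otimes B^{(0)}_{\nu_d})$. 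The conjugation action of $\sigma\in\mathfrak{S}_d$ carries the block indexed by $\nu$ isomorphically onto the block indexed by $\sigma\cdot\nu$, where $(\sigma\cdot\nu)_b=\nu_{\sigma^{-1}(b)}$; hence $\mathfrak{S}_d$ permutes the summands with orbits in bijection with partitions $\lambda\vdash n$ satisfying $l(\lambda)\le d$. By the elementary fact that the fixed points of a direct sum of algebras permuted by $G$ are, orbit by orbit, the fixed points of one chosen summand under its stabilizer, I obtain $\End_A(V^{\otimes n})^{\mathfrak{S}_d}\simeq\bigoplus_\lambda\bigl(\Mat_{\binom{n}{\nu}}(B_\nu)\bigr)^{Y_\lambda}$, where for each $\lambda$ I fix a representative $\nu$, set $B_\nu=B^{(0)}_{\nu_1}\otimes\cdots\otimes B^{(0)}_{\nu_d}$, and let $Y_\lambda=\prod_{i\ge 0}\mathfrak{S}_{l_i}$ be the Young subgroup stabilizing $\nu$, with $l_0=d-l(\lambda)$ counting the zero parts.

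Next I would unwind the $Y_\lambda$-action on a single block $\Mat_{\binom{n}{\nu}}(B_\nu)$. It has two features: it permutes the $\binom{n}{\nu}$ matrix indices, which I identify with the set $X_\nu$ of ordered partitions of $\{1,\dots,n\}$ into blocks of sizes $\nu_1,\dots,\nu_d$, and it simultaneously permutes the tensor factors of $B_\nu$ among those carrying equal $\nu_b$. The first key point is that the factor $\mathfrak{S}_{l_0}$ acts trivially on the whole block: it fixes every element of $X_\nu$ (it only relabels empty blocks) and it only permutes the factors $B^{(0)}_0=\Bbbk$ of $B_\nu$. Thus only $G:=\mathfrak{S}_{l_1}\times\cdots\times\mathfrak{S}_{l_n}$ acts effectively, and $\bigl(\Mat_{\binom{n}{\nu}}(B_\nu)\bigr)^{Y_\lambda}=\bigl(\Mat_{\binom{n}{\nu}}(B_\nu)\bigr)^{G}$. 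The second key point is that $G$ acts \emph{freely} on $X_\nu$: the stabilizer of a point in $Y_\lambda$ is exactly $\mathfrak{S}_{l_0}$, which meets $G$ trivially. Hence $X_\nu$ is a free $G$-set with $p:=\lvert X_\nu\rvert/\lvert G\rvert=\binom{n}{\lambda}/(l_1!\cdots l_n!)$ orbits, precisely the matrix size in the statement.

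The heart of the proof is then the identification of $\bigl(\Mat_{\binom{n}{\nu}}(B_\nu)\bigr)^{G}$ with $\Mat_p(\mathcal{C}_G(B_\nu))$. Choosing orbit representatives for the free $G$-set $X_\nu$ lets me write $\Mat_{\binom{n}{\nu}}(B_\nu)\simeq\Mat_p(\Mat_G(B_\nu))$, with $G$ acting trivially on the outer $\Mat_p$ and through a single regular orbit on $\Mat_G(B_\nu)$; therefore $\bigl(\Mat_{\binom{n}{\nu}}(B_\nu)\bigr)^{G}\simeq\Mat_p\bigl((\Mat_G(B_\nu))^{G}\bigr)$. It remains to identify $(\Mat_G(B_\nu))^{G}$ with $\mathcal{C}_G(B_\nu)$: a fixed matrix $(M_{g,h})_{g,h\in G}$ satisfies $M_{\sigma g,\sigma h}={}^{\sigma}(M_{g,h})$ for all $\sigma\in G$, hence is determined by the single function $u\mapsto M_{e,u}$, so the fixed space has the expected dimension $\lvert G\rvert\dim B_\nu$; comparing the product of two such matrices with the right multiplication on $B_\nu[G]$ recorded in \cref{rem:Gcirculant} exhibits this fixed subalgebra as the $G$-circulant algebra $\mathcal{C}_G(B_\nu)$. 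I expect this last comparison to be the main obstacle: conjugation by $\sigma\in G$ not only permutes the matrix indices but also twists the entries through the induced permutation of the identical tensor factors of $B_\nu$, and one must verify carefully that this twist is exactly what the $G$-circulant multiplication encodes. Assembling the blocks over all $\lambda$, and using $B^{(0)}_0=\Bbbk$ to rewrite $B_\nu=B^{(0)}_{\lambda_1}\otimes\cdots\otimes B^{(0)}_{\lambda_d}$, then yields the stated isomorphism.
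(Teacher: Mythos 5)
Your overall route is the same as the paper's: reduce to $\mathfrak{S}_d$-fixed points via \cref{lem:centralizer_semi}, feed in the block decomposition of \cref{lem:iso-matrix-alg}, group the blocks into $\mathfrak{S}_d$-orbits indexed by partitions $\lambda$, pass to the stabilizer of one representative, observe that the factor $\mathfrak{S}_{l_0}$ permuting empty parts acts trivially, and use the freeness of the residual action of $G=\mathfrak{S}_{l_1}\times\cdots\times\mathfrak{S}_{l_n}$ on the matrix indices to cut each block into $p=\binom{n}{\lambda}/(l_1!\cdots l_n!)$ regular $G$-orbits. All of this is correct and matches the paper step for step, and your reduction $\bigl(\Mat_{\binom{n}{\nu}}(B_\nu)\bigr)^{G}\simeq\Mat_p\bigl((\Mat_G(B_\nu))^{G}\bigr)$ is sound.

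The step you flag as ``the main obstacle'' is, however, a genuine gap, and it cannot be closed in the form you propose. You are right that the fixed-point condition is the twisted one, $M_{\sigma g,\sigma h}={}^{\sigma}(M_{g,h})$, where ${}^{\sigma}$ permutes the identical tensor factors of $B_\nu=B^{(0)}_{\lambda_1}\otimes\cdots\otimes B^{(0)}_{\lambda_d}$. But the algebra of such matrices is not $\mathcal{C}_G(B_\nu)\cong B_\nu\otimes_\Bbbk\Bbbk[G]$ of \cref{rem:Gcirculant}: writing $f(u)=M_{u,e}$ one gets $M_{g,h}={}^{h}\bigl(f(h^{-1}g)\bigr)$ and the product law $f''(g)=\sum_{h}{}^{h}\bigl(f(h^{-1}g)\bigr)\,f'(h)$, which is a skew (smash-product) multiplication, so the fixed subalgebra is $B_\nu\rtimes G$ rather than $B_\nu[G]$. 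These two algebras coincide when the permutation action on $B_\nu$ is inner (e.g.\ each repeated $B^{(0)}_{\lambda_i}$ a matrix algebra or $\Bbbk$, which covers all blocks with $n\leq 3$), but not in general: for $G=\mathfrak{S}_2$ swapping the two factors of $B\otimes B$ with $B=\Bbbk\times\Bbbk$, one has $(B\otimes B)\rtimes\mathfrak{S}_2\cong\Bbbk^{4}\oplus\Mat_2(\Bbbk)$, which is noncommutative, whereas $\mathcal{C}_{\mathfrak{S}_2}(B\otimes B)\cong\Bbbk^{8}$. This is exactly what happens in the $\lambda=(2,2)$ block for $n=4$, $d=2$, $B^{(0)}_2=\Hec_2$: decomposing $\bigoplus_{\overline{a}\in C_{(2,2)}}V_{a_1}\otimes\cdots\otimes V_{a_4}$ as an $A\rtimes\mathfrak{S}_2$-module gives a centralizer with five simple blocks of sizes $3,3,3,3,6$, not eight blocks of size $3$. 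Note that the paper's own proof asserts at this very point, without the twist, that the entries depend only on $h^{-1}g$; your version of the argument is the more careful one, and what it actually establishes is \eqref{iso:matrix-fixedpoints} with $\bigl(B^{(0)}_{\lambda_1}\otimes\cdots\otimes B^{(0)}_{\lambda_d}\bigr)\rtimes\bigl(\mathfrak{S}_{l_1}\times\cdots\times\mathfrak{S}_{l_n}\bigr)$ in place of the $G$-circulant algebra. So do not try to force the last identification through; instead record the twist and state the conclusion with the smash product (this also realigns the consequences in \cref{sec:consequencesfixedpoints} with the Clifford-theoretic count of irreducibles by orbits and stabilizers).
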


\begin{proof}
  We introduce some notations. We will denote $\overline{a}=(a_1,\dots,a_n)\in\{1,\dots,d\}^n$. Such an element $\overline{a}$ corresponds to a composition $\nu=(\nu_1,\ldots,\nu_d)\vDash_d n$, where $\nu_i$ counts the number of elements of $\overline{a}$ equal to $i$. We denote $C_{\nu}$ the set of all $\overline{a}$ corresponding to $\nu$. The sets $C_{\nu}$ are the orbits for the $\mathfrak{S}_n$-action on $\{1,\dots,d\}^n$ by place permutations. We can write the space $V^{\otimes n}$ as
  \[
    V^{\otimes n}=\bigoplus_{\nu\vDash_d n}\ \bigoplus_{\overline{a}\in C_{\nu}}V_{a_1}\otimes \cdots \otimes V_{a_n}\ .
  \]
  Now, from \cref{lem:centralizer_semi}, we need to calculate the fixed points under the $\mathfrak{S}_d$-action of $\text{End}_A(V^{\otimes n})$. The $\mathfrak{S}_d$-action comes from the action on $\{1,\dots,d\}^n$ extended diagonally from the natural action on $\{1,\dots,d\}$. This action permutes the sets $C_{\nu}$ corresponding to compositions having the same components in different order. With this $\mathfrak{S}_d$-action, any composition $\nu$ is equivalent to a partition $\lambda$ of $n$ with at most $d$ non zero parts, and we write $\nu\sim_{\mathfrak{S}_d}\lambda$ if this is the case. So now we have
  \[
    V^{\otimes n}=\bigoplus_{\substack{\lambda\vdash n \\ l(\lambda)\leq d}}\bigoplus_{\nu\sim_{\mathfrak{S}_d}\lambda}\bigoplus_{\overline{a}\in C_{\nu}}V_{a_1}\otimes \cdots \otimes V_{a_n}\ .
  \]
The assumption of the lemma implies that there is no $A$-linear morphisms between two subspaces corresponding to different compositions, so we get:
\[
  \End_{A\rtimes \mathfrak{S}_d}(V^{\otimes n})=\bigoplus_{\substack{\lambda\vdash n \\ l(\lambda)\leq d}}\Biggl(\bigoplus_{\nu\sim_{\mathfrak{S}_d}\lambda}\text{End}_A\Bigl(\bigoplus_{\overline{a}\in C_{\nu}}V_{a_1}\otimes \cdots \otimes V_{a_n}\Bigr)\Biggr)^{\mathfrak{S}_d}\ .
\]
For a given $\lambda$, the group $\mathfrak{S}_d$ permutes transitively the summands corresponding to different $\nu$. So a fixed point must have equal components in all these summands, and it is enough to look at the summand corresponding to $\nu=\lambda$. But still there is a residual action inside the summands. Writing $\lambda=(1^{l_1},2^{l_2},\dots,n^{l_n})$, we obtain
\[
  \End_{A\rtimes \mathfrak{S}_d}(V^{\otimes n})\cong\bigoplus_{\substack{\lambda\vdash n \\ l(\lambda)\leq d}}\text{End}_A\Bigl(\bigoplus_{\overline{a}\in C_{\lambda}}V_{a_1}\otimes \cdots \otimes V_{a_n}\Bigr)^{G_{\lambda}}\ ,
\]
where $G_{\lambda}$ is a subgroup of $\mathfrak{S}_d$ isomorphic to $\mathfrak{S}_{l_1}\times\dots\times \mathfrak{S}_{l_n}$. More precisely, $G_{\lambda}$ is the subgroup of elements permuting the numbers in $\{1,\dots,d\}$ which appear with equal multiplicities in $\overline{a}\in C_{\lambda}$. In particular it leaves stable the subset $C_{\lambda}$ of $\{1,\dots,d\}^{n}$. 

Then we decompose the set $C_{\lambda}$ as a union of orbits for the action of $G_{\lambda}$:
\[
  C_{\lambda}=C_{\lambda}^1\sqcup\dots\sqcup C_{\lambda}^k\ .
\]
Note that we have not included in $G_{\lambda}$ the subgroup permuting the numbers in $\{1,\dots,d\}$ appearing with zero multiplicities in $\overline{a}\in C_{\lambda}$. As a result, it is easy to see that the stabilizers of elements of $C_{\lambda}$ for the $G_{\lambda}$-action are all trivial. In other words, each orbit $C_{\lambda}^i$ is equivalent to $G_{\lambda}$ with its left regular action.

Now, as before, we see elements of $\text{End}_A\Bigl(\bigoplus_{\overline{a}\in C_{\lambda}}V_{a_1}\otimes \cdots \otimes V_{a_n}\Bigr)$ as matrices with coefficients in $B^{(0)}_{\lambda_1}\otimes\cdots\otimes B^{(0)}_{\lambda_d}$ and with lines and columns indexed by $C_{\lambda}$. We write such a matrix $M$ as a block matrix using the decomposition of $C_{\lambda}$ into $G_{\lambda}$-orbits. So $M=(M_{ij})_{i,j=1,\dots,k}$ and the block $M_{ij}$ has its lines indexed by $C_{\lambda}^i$ and its columns by $C_{\lambda}^j$. As discussed above, this means that the coefficients of $M_{ij}$ can be indexed by pairs $(g,h)$ where $g,h\in G_{\lambda}$. Finally, it is now immediate to check that the condition of being fixed by $G_{\lambda}$ is equivalent to the condition that for each block $M_{ij}$, the coefficient indexed by $(g,h)$ actually only depends on $h^{-1}g$. This means that each block is an element of $\mathcal{C}_{G_{\lambda}}\bigl(B^{(0)}_{\lambda_1}\otimes\cdots\otimes B^{(0)}_{\lambda_d}\bigr)$.

The cardinal of $C_{\lambda}$ is $\binom{\lambda}{n}$ while the cardinal of each $C_{\lambda}^i$ is $|G_{\lambda}|=l_1!\cdots l_n!$. Thus the number $k$ of blocks in the matrix above is $\binom{\lambda}{n}\frac{1}{l_1!\dots l_n!}$. This concludes the proof.
\end{proof}

\begin{rem}
  One can make a remark similar to \cref{rem-assumptionlemma}. Namely, if we remove the assumptions in the lemma above, we still have a subalgebra of the centralizer isomorphic to the right hand side of \eqref{iso:matrix-fixedpoints}. In fact, what we really have proven is that taking the fixed points under $\mathfrak{S}_d$ of the direct sum of matrix algebras found in \cref{lem:iso-matrix-alg} results in the right hand side of \eqref{iso:matrix-fixedpoints}.
\end{rem}

\subsubsection{Consequences}
\label{sec:consequencesfixedpoints}

As in \cref{sec:consequences}, the isomorphism of \cref{lem:centralizer_semi} implies a Morita equivalence, and in particular a description of the irreducible representations, which can also be obtained by applying Clifford theory to the fixed point subalgebra (see for example \cite[\S 3]{JPA2} and also \cref{rem:Gcirculant}). We have that the irreducible representations are indexed by
\[
  (\lambda,\rho_1,\dots,\rho_d,\Lambda_1,\dots,\Lambda_n)\ \ \ \ \text{where}\
  \left\{
    \begin{array}{l}
      \lambda\vdash n\ \text{with}\ \ell(\lambda)\leq d\,,\\[0.4em]
      \rho_b\in \text{Irr}(B^{(0)}_{\lambda_b})\,,\\[0.4em]
      \Lambda_i \in \text{Irr}_{\Bbbk}(\mathfrak{S}_{l_i})\ ,
    \end{array}
  \right.
\]
where we have denoted a partition $\lambda=(\lambda_1,\dots,\lambda_d)=(1^{l_1}2^{l_2}\dots n^{l_n})$. The dimension of this representation is
\[
  \binom{n}{\lambda}\frac{\prod_{b=1}^d\dim\rho_b\prod_{i=1}^n\dim\Lambda_i}{l_1!\dots l_n!}\ .
\]
Note that the total dimension of the direct sum of matrix algebras in \eqref{iso:matrix-fixedpoints} is
\begin{equation}\label{dim:matrix-fixedpoints}
  \sum_{\substack{\lambda\vdash n \\ l(\lambda)\leq d}}\binom{n}{\lambda}^2\frac{d_{\lambda_1}\dots d_{\lambda_d}}{l_1!\dots l_n!}\,,\ \ \ \ \ \ \text{where $d_k=\dim(B^{(0)}_k)$.} 
\end{equation}
Explicit examples will be given in \cref{sec:tied-braid_centralizers}.
 
\section{Framed braid group and centralizers of tensor products}
\label{sec:framedbraid}

In this section, we keep the setting of the previous section and we will add the assumption that for each algebra $A_b$ and its module $V_b$, we have a morphism from the braid group $\mathcal{B}_n$ to the centralizer $\End_{A_b}(V_b^{\otimes n})$. This assumption will imply that a certain ``framization'' of the braid group naturally appears when looking at the centralizers of the algebra $A$.

In this section, $d$ is still a fixed positive integer and we assume that the field $\Bbbk$ contains a primitive $d$-th root of unity $\zeta$, and that $d$ is invertible in $\Bbbk$. In particular $\Bbbk$ contains $d$ distinct $d$-th roots of unity, which are all powers $\zeta^b$, with $b=0,\dots,d-1$.

\subsection{Braid group and framed braid group}
\label{sec:braid-grp}

\subsubsection{The Artin--Tits braid group of type $A$}
\label{sec:braid-A}

We will denote by $\mathcal{B}_n$ the Artin--Tits braid group of type $A_{n-1}$. A presentation using generators and relations is given by
\[
  \mathcal{B}_n = \left\langle s_1,\ldots,s_{n-1} \middle\vert s_is_j=s_js_i \text{ if } \lvert i-j\rvert > 1, s_is_js_i=s_js_is_j\text{ if } \lvert i-j \rvert = 1\right\rangle.
\]
%We may think of this group as the braid group on $n$ strands, the generator $s_i$ corresponds to a positive crossing between the $i$-th and the $(i+1)$-strand.
%, as depicted on \cref{fig:crossing}
%
%\begin{figure}[h!]
%  \centering
%  \[
%  s_i=
%  \tikzdiagh{-.5ex}{
%    \draw (0,0) node[below] {$1$} -- (0,2);
%    \node at (.75,1) {$\cdots$};
%    \draw (1.5,0) -- (1.5,2);
%    \draw (3,0) node[below] {$i+1$} .. controls (3,.5) and (2,1.5) .. (2,2);
%    \draw[color=white,line width=3mm] (2,0) .. controls (2,.5) and (3,1.5) .. (3,2);
%    \draw (2,0) node[below] {$i$} .. controls (2,.5) and (3,1.5) .. (3,2);
%    \draw (3.5,0) -- (3.5,2);
%    \node at (4.25,1) {$\cdots$};
%    \draw (5,0) node[below] {$n$} -- (5,2);
%  }
%  \]
%  \caption{Graphical interpretation of $s_i$ in $\mathcal{B}_n$.}
%  \label{fig:crossing}
%\end{figure}

\subsubsection{The framed braid group}
\label{sec:framed-A}

The framed braid group $\mathcal{FB}_{d,n}$ (of type $A$) is defined as the wreath product $\mathcal{B}_n \wr \mathbb{Z}/d\mathbb{Z}$, where the braid group $\mathcal{B}_n$ acts on $(\mathbb{Z}/d\mathbb{Z})^n$ via permutations. An explicit presentation by generators and relations is given by the following
\[
  \mathcal{FB}_{d,n} = 
  \left\langle
    \begin{array}{l}
      s_1,\ldots,s_{n-1},\\
      t_1,\ldots,t_n
    \end{array} 
    \middle\vert
    \begin{array}{ll}
      s_is_j=s_js_i &\text{ if } \lvert i-j\rvert > 1,\\
      s_is_js_i=s_js_is_j & \text{ if } \lvert i-j \rvert = 1,\\
      t_i^d = 1 & \text{ for all } 1 \leq i \leq n,\\
      t_it_j = t_jt_i &\text{ for all } 1 \leq i,j \leq n,\\
      s_it_j = t_{s_i(j)}s_i & \text{ for all } 1 \leq i < n \text{ and } 1 \leq j \leq n
    \end{array}
  \right\rangle.
\]
Here $s_i$ acts on indices $1,\dots,n$ as the transposition $(i,i+1)$.

\subsection{Framed braid group and centralizers}
\label{sec:framed-cent}

For all $b\in\{1,\dots,d\}$, we assume the existence of an element $\check{R}^{(b)}\in \End(V_b\otimes V_b)$ such that, for any $n\geq 2$, we have an algebra morphism given by
\begin{equation}\label{braid_in_Ab}
  \phi_b\ \ \ :\ \
  \begin{array}{rcl} \Bbbk\mathcal{B}_n & \to & \End_{A_b}(V_b^{\otimes n})\\[0.4em]
    s_i & \mapsto & \check{R}^{(b)}_i\ \ \ (i=1,\dots,n-1)\ ,
  \end{array}
\end{equation}
where as usual $\check{R}^{(b)}_i$ denotes the map which acts as $\check{R}^{(b)}$ on factors $i$ and $i+1$ in $V_b^{\otimes n}$. Since $\check{R}^{(b)}_i$ and $\check{R}^{(b)}_j$ obviously commute when $|i-j|>1$, the assumption amounts to the braid relation $\check{R}^{(b)}_i\check{R}^{(b)}_{i+1}\check{R}^{(b)}_i=\check{R}^{(b)}_{i+1}\check{R}^{(b)}_i\check{R}^{(b)}_{i+1}$ being satisfied.

Note that we slightly abuse notations by not indicating the dependence on $n$ for the maps $\phi_b$. The relevant $n$ will always be clear from the context. Note also that $\phi_b$ is also trivially defined for $n=0,1$ since in these cases the braid group $\mathcal{B}_n$ is the trivial group.

\begin{ex}
  If $A_{b}$ is a quasi-triangular Hopf algebra, the action of the $R$-matrix of $A$ on the tensor product $V_{b}\otimes V_{b}$ provides a map $\check{R}^{(b)}$ satisfying the above assumptions.

  Our main examples will be when $A_{b}=U_q(\mathfrak{g})$, namely a quantum enveloping algebra, where $\mathfrak{g}$ is $\mathfrak{gl}_N$ or a complex simple Lie algebra. In this case, $\check{R}^{(b)}$ is obtained through the action of the universal $R$-matrix on a finite-dimensional weight representation $V_{b}$. We refer to \cite{KlSc} for more details on quantum enveloping algebras.
\end{ex}

\begin{rem}
  The braid group does not always generate the centralizer $\End_{A_b}(V_b^{\otimes n})$. If $A_b$ is a quantum enveloping algebra of a simple complex Lie algebra, Lehrer and Zhang \cite{LZ} give a sufficient condition on the representations $V_b$ such that the braid group generates the centralizer algebra.
\end{rem}

Our goal now is to define elements realizing the framed braid group in the centralizer $\End_A(V^{\otimes n})$ of $A$ in $V^{\otimes n}$. First, we define $\tau \colon V \rightarrow V$ by
\[
 \tau(v) = \zeta^{b-1}v\quad\text{ if }v\in V_b,\ \ \forall b=1,\dots,d\ .
\]
% where $\zeta$ is a primitive $d$-th root of unity in $\Bbbk$.
With respect to the decomposition $V=\bigoplus_{a=1}^d V_a$, the endomorphism $\tau$ is simply block diagonal, namely, we have:
\[
  \tau=\bigoplus_{b=1}^d\zeta^{b-1}\Id_{V_b}\ .
\]
Then we define $\sigma \colon V\otimes V \rightarrow V\otimes V$ by
\[
  \sigma(v\otimes w) =
  \begin{cases}
    w\otimes v & \text{if } v \in V_b, w\in V_c \text{ with }b\neq c,\\
    \check{R}^{(b)}(v\otimes w) &\text{if } v,w\in V_b.
  \end{cases}
\]
Note that the subspaces $V_b\otimes V_c$ and $V_c\otimes V_b$ are simply permuted by $\sigma$ if $b\neq c$, and $V_b\otimes V_b$ is globally fixed by $\sigma$. An equivalent description of $\sigma$ is:
\[
  \sigma=\bigoplus_{b=1}^d\check{R}^{(b)}|_{V_b\otimes V_b}\oplus \bigoplus_{b\neq c} P|_{V_b\otimes V_c\oplus V_c\otimes V_b}\,,
\]
where $P|_{V_b\otimes V_c\oplus V_c\otimes V_b}$ is the flip operator sending $v\otimes w$ to $w\otimes v$.

To denote endomorphisms of $V^{\otimes n}$, we use the following standard notations:
\[
  \tau_i=\Id_V^{\otimes i-1}\otimes\tau\otimes\Id_V^{\otimes n-i}\ \ \ \text{and}\ \ \ \sigma_i=\Id_V^{\otimes i-1}\otimes\sigma\otimes\Id_V^{n-i-1}\ .
\]
We are ready to state the relation between the framed braid group and the centralizer of $A$ in $V^{\otimes n}$. Recall that the maps $\phi_b$ denote the morphisms from the usual braid group to the centralizers $\End_{A_b}(V_b^{\otimes n})$, set up in \eqref{braid_in_Ab}.

\begin{thm}\label{thm:action-framed}
  Let $n\geq 1$.
  \begin{enumerate}[label=(\roman*)]
  \item We have the following morphism into the centralizer of $A$ in $V^{\otimes n}$:
    \[
      \Phi\ :\
      \begin{array}{rcl}
        \Bbbk\mathcal{FB}_{d,n} & \rightarrow &  \End_A(V^{\otimes n})\\[0.4em]
        t_i & \mapsto & \tau_i\ \ \ \ (i=1,\dots,n)\,,\\[0.4em]
        s_i & \mapsto & \sigma_i\ \ \ \ (i=1,\dots,n-1)\,.
      \end{array}
    \]
  \item We have:
    \begin{equation}\label{iso-imageframed}
      \Phi(\Bbbk\mathcal{FB}_{d,n}) \simeq \bigoplus_{\nu \vDash_d n}\Mat_{\binom{n}{\nu}}\Bigl(\phi_1(\Bbbk\mathcal{B}_{\nu_1})\otimes \cdots \otimes \phi_d(\Bbbk\mathcal{B}_{\nu_d})\Bigr)\,.
    \end{equation}
  \end{enumerate}
\end{thm}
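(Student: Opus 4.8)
The plan is to prove the two parts in turn: first that $\tau_i$ and $\sigma_i$ genuinely lie in $\End_A(V^{\otimes n})$ and satisfy the defining relations of $\mathcal{FB}_{d,n}$, so that $\Phi$ is a well-defined morphism; and then to compute its image blockwise along the grading of $V^{\otimes n}$ by $d$-compositions. For the $A$-linearity in part (i), note that $\tau$ is the scalar $\zeta^{b-1}$ on each $A$-submodule $V_b$, hence an $A$-endomorphism of $V$, so each $\tau_i$ is an $A$-endomorphism of $V^{\otimes n}$ because a tensor product of $A$-module maps is again one (the action being through the coproduct). For $\sigma$ I would argue blockwise: on $V_b\otimes V_b$ the $A$-action factors through $A_b$ via its coproduct, up to the counit scalars $\prod_{e\neq b}\epsilon_e(a_e)$ contributed by the other tensorands, so the $A_b$-linear map $\check{R}^{(b)}$ is automatically $A$-linear there; and on $V_b\otimes V_c\oplus V_c\otimes V_b$ with $b\neq c$ a short Sweedler computation shows the action decouples as $a\cdot(v\otimes w)=\bigl(\prod_{e\neq b,c}\epsilon_e(a_e)\bigr)(a_b v)\otimes(a_c w)$, which is manifestly intertwined by the flip $P$ (equivalently, $P$ realizes the canonical $A$-isomorphism $V_b\otimes V_c\cong V_c\otimes V_b$ already used in the proof of \cref{lem:iso-matrix-alg}). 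Hence every $\sigma_i\in\End_A(V^{\otimes n})$.

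Next I would check the relations of $\mathcal{FB}_{d,n}$. Those not mixing the two families of generators are immediate: $\tau^d=\zeta^{(b-1)d}=1$ on $V_b$ gives $\tau_i^d=\Id$; the $\tau_i$ commute; and generators with disjoint supports commute, which disposes of $s_is_j=s_js_i$ for $\lvert i-j\rvert>1$, of $t_it_j=t_jt_i$, and of $s_it_j=t_js_i$ for $j\neq i,i+1$. The mixed relations $s_it_i=t_{i+1}s_i$ and $s_it_{i+1}=t_is_i$ reduce to $\sigma(\tau\otimes\Id)=(\Id\otimes\tau)\sigma$ and $\sigma(\Id\otimes\tau)=(\tau\otimes\Id)\sigma$ on $V\otimes V$, both of which follow from a one-line case check on $V_b\otimes V_c$ using that $\tau$ is scalar on each block and that $\sigma$ either swaps the blocks $(b\neq c)$ or preserves $V_b\otimes V_b$ $(b=c)$. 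The only real computation is the braid relation $\sigma_1\sigma_2\sigma_1=\sigma_2\sigma_1\sigma_2$ on $V^{\otimes 3}$, which I would verify on each summand $V_a\otimes V_b\otimes V_c$, splitting into three cases according to the multiset $\{a,b,c\}$: all distinct (the $\sigma_i$ act as the flips $P_i$ and the relation is the symmetric-group braid relation), all equal (the $\sigma_i$ are $\check{R}^{(b)}_i$ and the relation is the assumed braid relation behind $\phi_b$ in \eqref{braid_in_Ab}), and exactly two equal (a direct check on the three arrangements of $\{b,b,c\}$, tracking the single occurrence of $\check{R}^{(b)}$ produced when the two equal colors meet).

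For part (ii) the starting point is that all generators preserve the composition grading $V^{\otimes n}=\bigoplus_{\nu\vDash_d n}W_\nu$, where $W_\nu=\bigoplus_{\overline{a}\in C_\nu}V_{a_1}\otimes\dots\otimes V_{a_n}$: indeed $\tau_i$ is scalar on each summand, while $\sigma_i$ sends the summand $\overline{a}$ to itself if $a_i=a_{i+1}$ and to $s_i\overline{a}$ otherwise, staying inside the same $C_\nu$. Hence $\Phi(\Bbbk\mathcal{FB}_{d,n})\subseteq\bigoplus_\nu\End_A(W_\nu)$, which is precisely the subalgebra of \cref{rem-assumptionlemma}; in particular \emph{no vanishing hypothesis on the $\Hom_{A_b}$ is needed for part (ii)}. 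By the proof of \cref{lem:iso-matrix-alg}, each summand of $W_\nu$ is $A$-isomorphic to $V_1^{\otimes\nu_1}\boxtimes\dots\boxtimes V_d^{\otimes\nu_d}$, so fixing sorting isomorphisms $\theta_{\overline a}$ (built from the $A$-linear flips $P$, using the minimal-length coset representative that stably sorts $\overline a$) identifies $\End_A(W_\nu)\cong\Mat_{\binom{n}{\nu}}(D_\nu)$ with $D_\nu=\End_{A_1}(V_1^{\otimes\nu_1})\otimes\dots\otimes\End_{A_d}(V_d^{\otimes\nu_d})$ and rows and columns indexed by $C_\nu$. Writing $D_\nu^{\mathcal{B}}=\phi_1(\Bbbk\mathcal{B}_{\nu_1})\otimes\dots\otimes\phi_d(\Bbbk\mathcal{B}_{\nu_d})$, the inclusion ``$\subseteq$'' of \eqref{iso-imageframed} is read off the generators: $\tau_i$ becomes diagonal with scalar entries, an internal $\sigma_i$ becomes a diagonal entry $\check{R}^{(a_i)}=\phi_{a_i}(s_j)$ sitting in the $a_i$-th tensor factor, and a swapping $\sigma_i$ becomes an off-diagonal matrix unit with entry $\Id$. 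For ``$\supseteq$'' I would extract the idempotents $E_{\overline a}=\prod_{i}\frac1d\sum_{k=0}^{d-1}\zeta^{-(a_i-1)k}\tau_i^k$ projecting onto the individual summands (here the primitivity of $\zeta$ and invertibility of $d$ enter), realize every off-diagonal matrix unit $e_{\overline a,\overline b}\otimes\Id$ through a product of swapping $\sigma_i$ along a flip-only path joining $\overline b$ to $\overline a$, and obtain a diagonal copy of $D_\nu^{\mathcal B}$ at the sorted word from the internal $\sigma_i$; multiplying these together fills out $\Mat_{\binom{n}{\nu}}(D_\nu^{\mathcal B})$.

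I expect the genuine obstacle to be the ``$\supseteq$'' direction of part (ii), i.e. showing the image is \emph{large enough}. The delicate point is the consistent choice of the sorting isomorphisms $\theta_{\overline a}$: with the minimal-length coset representatives (stable sort), a standard property of such representatives guarantees that a distinct-color swap $\sigma_i$ indeed produces a matrix unit with \emph{identity} entry rather than a spurious flip of equal-colored factors (which need not lie in $D_\nu^{\mathcal B}$), and similarly that an equal-color $\sigma_i$ produces an honest braid generator $\phi_{a_i}(s_j)$ in one tensor factor. Combined with the root-of-unity Fourier idempotents that separate the $\binom{n}{\nu}$ mutually isomorphic summands, this is what makes the image exactly a sum of matrix algebras over the $D_\nu^{\mathcal B}$; both ingredients rely essentially on the standing hypotheses that $\Bbbk$ contains a primitive $d$-th root of unity and that $d$ is invertible. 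By contrast, the case analysis for the braid relation in part (i) is routine.
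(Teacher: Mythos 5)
Your proposal is correct and follows essentially the same route as the paper's proof: part (i) is checked blockwise on the summands $V_{a_1}\otimes\dots\otimes V_{a_n}$ (with $\tau_i$ scalar, $\sigma_i$ either an internal $\check{R}^{(a_i)}_i$ or an $A$-linear flip, and the braid relation split into the same three colour cases), and part (ii) is obtained by combining the Fourier idempotents built from the $\tau_i$ to separate summands, flip-only products of $\sigma_i$'s to connect summands of the same composition, and the internal $\sigma_i$'s on the sorted summand to generate $\phi_1(\Bbbk\mathcal{B}_{\nu_1})\otimes\cdots\otimes\phi_d(\Bbbk\mathcal{B}_{\nu_d})$. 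Your extra care with the stable-sort identifications $\theta_{\overline{a}}$ (guaranteeing that distinct-colour swaps become identity-entry matrix units) makes explicit a point the paper leaves implicit, but the argument is the same.
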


\begin{proof}
  (i) As in the proof of \cref{lem:iso-matrix-alg}, we look at the following decomposition of $V$ as a direct sum of $A$-modules:
  \begin{equation}\label{proof_dec2}
    V^{\otimes n}=\bigoplus_{a_1,\dots,a_n=1}^dV_{a_1}\otimes\dots\otimes V_{a_n}\simeq \bigoplus_{\nu \vDash_d n} \left(V_1^{\otimes \nu_1}\boxtimes \cdots \boxtimes V_d^{\otimes \nu_d}\right)^{\oplus \binom{n}{\nu}}\ ,
  \end{equation}
  where each $(a_1,\dots,a_n)$ corresponds to a composition $\nu$ such that $\nu_j$ is the number of elements among $a_1,\dots,a_d$ which are equal to $j$.
  
  The map $\tau_i$, $i=1,\dots,n$, is equal to $\zeta^{a_i-1}\Id$ on the summand $V_{a_1}\otimes\dots\otimes V_{a_n}$, and therefore commutes with the action of $A$. If $a_i=a_{i+1}=b$, the map $\sigma_i$ is equal to $\check{R}^{(b)}_i$ on $V_{a_1}\otimes\dots\otimes V_{a_n}$, that is it acts as $\check{R}^{(b)}$ on factors $i$ and $i+1$ and as the identity on other factors. This also commutes with $A$, since only $A_{b}$ acts non-trivially on $V_b\otimes V_b$ and its action is centralized by $\check{R}^{(b)}$. Finally, if $a_i\neq a_{i+1}$ then the summand $V_{a_1}\otimes\dots\otimes V_{a_n}$ is isomorphic to the summand with $V_{a_i}$ and $V_{a_{i+1}}$ exchanged. This isomorphism of $A$-modules is given by the flip operator acting on factors $i$ and $i+1$. This coincides with the action of $\sigma_i$ in this case, which therefore commutes with the action of $A$.

  We have shown that the image of the given map indeed takes values in the centralizer of $A$. Now it is straightforward to check that the relations of the framed braid group are satisfied. For example, to check the braid relation between $\sigma_i$ and $\sigma_{i+1}$, the action is non-trivial only on $V_{a_i}\otimes V_{a_{i+1}}\otimes V_{a_{i+2}}$, and one splits the verification in several cases, depending on which among $a_i,a_{i+1},a_{i+2}$ are equal to each other.

  \vskip .1cm
  (ii) First of all, the summands corresponding to different sequences $(a_1,\dots,a_n)$ in \eqref{proof_dec2} are distinguished by the eigenvalues of the commuting operators $\tau_1,\dots,\tau_n$. Therefore, in the image of the framed braid group, we have all projections onto the different summands $V_{a_1}\otimes\dots\otimes V_{a_n}$.

  Then for two summands $V_{a_1}\otimes\dots\otimes V_{a_n}$ and $V_{b_1}\otimes\dots\otimes V_{b_n}$ corresponding to the same composition, one is obtained from the other by a permutation of the indices. The corresponding permutation operator is the corresponding isomorphism of $A$-modules. It is easy to see that such a permutation operator is in the image of the framed braid group, using suitable $\sigma$'s for permuting factors with different indices. 
  
  Now for a composition $\nu$, we consider the simplest summand $V_{a_1}\otimes\dots\otimes V_{a_n}$ corresponding to $\nu$, namely, the summand
  \[
    V_1^{\otimes \nu_1}\otimes \cdots\otimes V_d^{\otimes \nu_d}\ ,
  \]
  which is obtained by taking $a_1=\dots=a_{\nu_1}=1$, $a_{\nu_1+1}=\dots=a_{\nu_1+\nu_2}=2$ and so on. At this point, it remains to show that the subalgebra
  \[
    \phi_1(\Bbbk\mathcal{B}_{\nu_1})\otimes \cdots \otimes \phi_d(\Bbbk\mathcal{B}_{\nu_d})
  \]
  of its endomorphism algebra is obtained in the image of $\Bbbk\mathcal{FB}_{d,n}$. This is the case since this subalgebra is generated by the restrictions of the operators $\sigma_1,\dots,\sigma_{\nu_1-1}$, and $\sigma_{\nu_1+1},\dots,\sigma_{\nu_1+\nu_2-1}$, and so on.
\end{proof}

We can strengthen the conclusion of the preceding theorem if we add some natural assumptions on the centralizers of the algebras $A_b$. First, assume that for all $b=1,\dots,d$, the image of the braid group $\mathcal{B}_n$ generates the centralizer algebra $\End_{A_b}(V_b^{\otimes n})$ for all $n\geq 0$. In other words, the maps $\phi_b$ are all surjective. In this case, the formula in the second item becomes obviously:
\begin{equation}\label{iso-imageframed2}
  \Phi(\Bbbk\mathcal{FB}_{d,n}) \simeq \bigoplus_{\nu \vDash_d n}\Mat_{\binom{n}{\nu}}\Bigl(\End_{A_1}(V_1^{\otimes \nu_1})\otimes \cdots \otimes \End_{A_d}(V_d^{\otimes \nu_d})\Bigr)\,.
\end{equation}
Comparing with \cref{lem:iso-matrix-alg}, we see that this is the full centralizer $\End_{A}(V^{\otimes n})$ if we add the further assumption that $\Hom_{A_b}(V_b^{\otimes r},V_b^{\otimes s}) = 0$ if $r\neq s$. We summarize in the following statement.

\begin{cor}\label{cor:action-framed}
  For all $b=1,\dots,d$, assume that $\Hom_{A_b}(V_b^{\otimes r},V_b^{\otimes s}) = 0$ if $r\neq s$ and that for all $n\geq 0$, the image of the braid group $\mathcal{B}_n$ generates the centralizer algebra $\End_{A_b}(V_b^{\otimes n})$. Then the centralizer algebra $\End_{A}(V^{\otimes n})$ is generated by the image of $\mathcal{FB}_{d,n}$.
\end{cor}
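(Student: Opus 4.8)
The plan is to read off the equality $\Phi(\Bbbk\mathcal{FB}_{d,n})=\End_A(V^{\otimes n})$ by comparing the two explicit matrix-algebra descriptions already available: the one for the image given in \cref{thm:action-framed}(ii) and the one for the full centralizer given in \cref{lem:iso-matrix-alg}. Since \cref{thm:action-framed}(i) already provides an inclusion $\Phi(\Bbbk\mathcal{FB}_{d,n})\subseteq\End_A(V^{\otimes n})$ of subalgebras of $\End_\Bbbk(V^{\otimes n})$, there is nothing to construct; the task is only to show that this inclusion is onto under the two hypotheses.

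First I would feed the surjectivity hypothesis into \eqref{iso-imageframed}: for each composition $\nu$ and each $b$ the map $\phi_b\colon\Bbbk\mathcal{B}_{\nu_b}\to\End_{A_b}(V_b^{\otimes\nu_b})$ is onto, so $\phi_b(\Bbbk\mathcal{B}_{\nu_b})=\End_{A_b}(V_b^{\otimes\nu_b})$, and \eqref{iso-imageframed} becomes \eqref{iso-imageframed2}. Next I would feed the vanishing hypothesis $\Hom_{A_b}(V_b^{\otimes r},V_b^{\otimes s})=0$ for $r\neq s$ into \cref{lem:iso-matrix-alg}: this is exactly that lemma's assumption, and its conclusion \eqref{iso-matrix-alg} exhibits $\End_A(V^{\otimes n})$ as precisely the right-hand side of \eqref{iso-imageframed2}. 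Thus both algebras are abstractly isomorphic to the same direct sum of matrix algebras.

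The point that needs real care---and the step I expect to be the only genuine obstacle---is that an abstract isomorphism of the smaller algebra with the larger one does not by itself force the inclusion to be an equality. I would close this gap by noting that both identifications are built from one and the same decomposition \eqref{proof_dec2} of $V^{\otimes n}$ into $A$-submodules, so that they are compatible and the inclusion corresponds to the identity map of the common direct sum of matrix algebras. Concretely, following the mechanism in the proof of \cref{thm:action-framed}(ii), every element of $\End_A(V^{\otimes n})$ is assembled from the $\tau$-eigenspace projections, the permutation operators obtained from the $\sigma_i$ acting on distinct neighbouring indices, and the restrictions of the $\sigma_i$ to equal-index factors; under the surjectivity hypothesis these last restrictions exhaust each factor $\End_{A_b}(V_b^{\otimes\nu_b})$, so the whole centralizer lies in the image. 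Alternatively, in the finite-dimensional setting one may simply compare dimensions: \eqref{iso-imageframed2} and \eqref{iso-matrix-alg} yield equal dimensions, and an inclusion of algebras of equal finite dimension is an equality.
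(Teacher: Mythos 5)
Your proposal is correct and follows essentially the same route as the paper: the surjectivity of the maps $\phi_b$ upgrades \eqref{iso-imageframed} to \eqref{iso-imageframed2}, and the vanishing hypothesis identifies this, via \cref{lem:iso-matrix-alg}, with the full centralizer. Your extra remark that both matrix-algebra descriptions are realized through the same decomposition \eqref{proof_dec2}, so that the inclusion of the image in the centralizer is genuinely an equality rather than a mere abstract isomorphism, is exactly the point the paper leaves implicit in the phrase ``comparing with \cref{lem:iso-matrix-alg}''.
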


\begin{ex}
  The assumptions of the corollary are satisfied for $U_q(\mathfrak{gl}_N)$ with its fundamental vector representation or for $U_q(\mathfrak{gl}_2)$ with any of its irreducible representations. 
  
  Note that the assumption $\Hom_{A_b}(V_b^{\otimes r},V_b^{\otimes s}) = 0$ if $r\neq s$ will not always be satisfied if $A_b=U_q(\mathfrak{sl}_N)$. For example if $V_b$ is the vector representation, we have $\Hom_{A_b}(V_b^{\otimes {(N+k)}},V_b^{\otimes k}) \neq 0$ for any $k\geq 0$. Here the assumption will be satisfied if $r,s<N$.
\end{ex}

\subsection{The idempotents $E_{i,j}$}\label{sec_Eij}

We define elements $E_{i,j}$, for $i,j=1,\dots,n$, which will be important in the following. They are defined in the commutative subalgebra of the group algebra $\Bbbk\mathcal{FB}_{d,n}$ generated by $t_1,\dots,t_n$:
\[
  E_{i,j}=\frac{1}{d}\sum_{a=1}^dt_i^at_{j}^{-a}\ \ \ \ \ \text{and}\ \ \ \ \ E_i=E_{i,i+1}\ .
\]
The elements $E_{i,j}$ are idempotents, which satisfy obviously $E_{i,j}=E_{j,i}$ and:
\[
  t_iE_{i,j}=t_{j}E_{i,j}\ .
\]
Moreover, their relations with the other generators $s_1,\dots,s_{n-1}$ are
\[
  s_kE_{i,j}=E_{s_k(i),s_k(j)}s_k\ \ \ \ \text{and in particular}\ \ \ s_iE_i=E_is_i\ .
\]
To calculate the images of the idempotents $E_{i,j}$ by $\Phi$, the map given in \cref{thm:action-framed}, we introduce the following operator on $V\otimes V$:
\[
  \varepsilon=\bigoplus_{b=1}^d \Id_{V_b\otimes V_b}\oplus\bigoplus _{b\neq c}0_{V_b\otimes V_c}\ ,
\]
or equivalently
\[
  \varepsilon(v\otimes w) =
  \begin{cases}
    v\otimes w & \text{if } v,w\in V_b\,,\\
    0 & \text{if } v \in V_b, w\in V_c \text{ with }b\neq c\,.
  \end{cases}
\]
For any $i\neq j$, we denote $\varepsilon_{i,j}$ the endomorphism of $V^{\otimes n}$ acting as $\varepsilon$ on factors $i$ and $j$, and as the identity on the other factors. As before, we denote $\varepsilon_i=\varepsilon_{i,i+1}$. 

\begin{prop}\label{action_E}
  We have:
  \[
    \varepsilon_{i,j}=\Phi(E_{i,j})\ \ \ \text{and in particular}\ \ \ \varepsilon_{i}=\Phi(E_i)\ .
  \]
\end{prop}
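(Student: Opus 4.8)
The plan is to verify the identity by computing $\Phi(E_{i,j})$ directly on each summand of the decomposition
\[
  V^{\otimes n}=\bigoplus_{a_1,\dots,a_n=1}^dV_{a_1}\otimes\dots\otimes V_{a_n}
\]
used throughout \cref{sec:framed-cent}, and checking that it agrees there with $\varepsilon_{i,j}$. Both operators preserve this decomposition (the $\tau_k$ are diagonal with respect to it, and $\varepsilon$ acts blockwise), so it suffices to match their actions summand by summand.

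First I would apply $\Phi$ to the definition of $E_{i,j}$. Since $\Phi$ is an algebra morphism with $\Phi(t_k)=\tau_k$, we get $\Phi(E_{i,j})=\frac{1}{d}\sum_{a=1}^d\tau_i^a\tau_j^{-a}$. On the summand $V_{a_1}\otimes\dots\otimes V_{a_n}$, the operator $\tau_i$ acts by the scalar $\zeta^{a_i-1}$ and $\tau_j$ by $\zeta^{a_j-1}$, so $\tau_i^a\tau_j^{-a}$ acts by $\zeta^{a(a_i-a_j)}$. Hence $\Phi(E_{i,j})$ acts on this summand by the scalar $\frac{1}{d}\sum_{a=1}^d\zeta^{a(a_i-a_j)}$. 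Next I would invoke the orthogonality relation for the primitive $d$-th root of unity $\zeta$: the sum $\frac{1}{d}\sum_{a=1}^d\zeta^{am}$ equals $1$ when $m\equiv 0 \pmod d$ and $0$ otherwise. Since $a_i,a_j\in\{1,\dots,d\}$, their difference $a_i-a_j$ lies in $\{-(d-1),\dots,d-1\}$, so it is divisible by $d$ precisely when $a_i=a_j$. Therefore $\Phi(E_{i,j})$ acts as the identity on the summand when $a_i=a_j$ and as $0$ when $a_i\neq a_j$.

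Finally I would compare with $\varepsilon_{i,j}$. By the definition of $\varepsilon$, the operator $\varepsilon_{i,j}$ acts on $V_{a_1}\otimes\dots\otimes V_{a_n}$ as the identity when the factors $V_{a_i}$ and $V_{a_j}$ lie in the same summand $V_b$ of $V$, that is when $a_i=a_j$, and as $0$ otherwise. This matches $\Phi(E_{i,j})$ on every summand, so the two endomorphisms coincide. The special case $\varepsilon_i=\Phi(E_i)$ follows by taking $j=i+1$. There is no genuine obstacle in this argument: it is a direct diagonalization combined with root-of-unity orthogonality. The only point requiring a moment of care is the bound on the range of $a_i-a_j$, which is what upgrades the congruence $d\mid(a_i-a_j)$ to the honest equality $a_i=a_j$ and thereby pins down the support of $\Phi(E_{i,j})$ exactly to the blocks fixed by $\varepsilon_{i,j}$.
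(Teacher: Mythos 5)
Your proof is correct and follows the same route as the paper: apply $\Phi$ to the definition of $E_{i,j}$, observe that $\tau_i^a\tau_j^{-a}$ acts by the scalar $\zeta^{a(a_i-a_j)}$ on each summand, and use orthogonality of the $d$-th roots of unity to conclude the sum is $1$ exactly when $a_i=a_j$ and $0$ otherwise. You simply spell out the root-of-unity argument in more detail than the paper does.
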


\begin{proof}
  We have that $\Phi(E_{i,j})$ is $\frac{1}{d}\sum_{a=1}^d\tau_i^a\tau_{j}^{-a}$ and thus if we have a vector in $V_b$ in the $i$-th factor and a vector in $V_c$ in the $j$-th factor, we find that it is multiplied by $\frac{1}{d}\sum_{a=1}^d\zeta^{(b-c)a}$. This is $1$ if $b = c$ and $0$ otherwise.
\end{proof}

\subsubsection{Framization of a characteristic equation} 

The idempotents $E_{i,j}$ and their images $\varepsilon_{i,j}$ are useful to find relations satisfied in the centralizer algebra $\End_A(V^{\otimes n})$ in addition to the relations of the framed braid group. We show how it works when we know a characteristic equation for the maps $\check{R}^{(b)}\in\End(V\otimes V)$, namely we assume that we know non-zero elements $\lambda_1,\dots,\lambda_k$ of $\Bbbk$ such that:
\[
  (\check{R}^{(b)}-\lambda_1)\dots (\check{R}^{(b)}-\lambda_k)=0\,, \ \ \ b=1,\dots,d\,,
\]
The point is that the eigenvalues of $\check{R}^{(b)}$ are the same for all $b=1,\ldots,d$.

This assumption easily implies the following relations for the images of the generators of the framed braid group $\mathcal{FB}_n$ in the centralizer $\End_A(V^{\otimes n})$:
\begin{equation}\label{chareq}
  \varepsilon_i(\sigma_i-\lambda_1)\dots (\sigma_i-\lambda_k)=0\ \ \ \ \text{and}\ \ \ \ (1-\varepsilon_i)(\sigma_i^2-1)=0\,.
\end{equation}
This is immediate, since $\sigma_i$ is the flip operator when restricted to the kernel of the projector $\varepsilon_i$, while it is the direct sum of the operators $\check{R}^{(b)}_i$, $b=1,\dots,d$, when restricted to the kernel of $1-\varepsilon_i$.
We note that any linear combination with two non-zero coefficients of the relations above implies both relations. Explicit examples will be given in \cref{sec:recollection}.

\section{The framed affine braid group and one-boundary centralizers}

In this section, we give the variants of the preceding section involving the affine braid group and its framed version when we consider the one-boundary setting.

\subsection{The affine braid group and its framed version.} \label{sec:braid-B}

We denote by $\mathcal{B}_n^{\mathrm{aff}}$ the affine braid group (or type $B_n$ Artin--Tits braid group) with generators $s_0, s_1, \ldots, s_{n-1}$ and defining relations: 
\begin{equation}\label{affinebraid}
  \begin{array}{rclcl}
    s_0s_1s_0s_1 & = & s_1s_0s_1s_0 && \\[0.1em]
    s_is_j & = & s_js_i && \mbox{for all $i,j=0,1,\ldots,n-1$ such that $\vert i-j\vert > 1$,}\\[0.1em]
    s_is_{i+1}s_i & = & s_{i+1}s_is_{i+1} && \mbox{for  all $i=1,\ldots,n-2$.}\\[0.1em]
  \end{array}
\end{equation}

The framed affine braid group $\mathcal{FB}^{\mathrm{aff}}_{d,n}$ is defined as the wreath product $\mathcal{B}^{\mathrm{aff}}_n \wr \mathbb{Z}/d\mathbb{Z}$, where the braid group $\mathcal{B}^{\mathrm{aff}}_n$ acts on $(\mathbb{Z}/d\mathbb{Z})^n$ as follows. The generators $s_1,\dots,s_{n-1}$ act as permutations as before, and the additional generator $s_0$ acts trivially. An explicit presentation of $\mathcal{FB}^{\mathrm{aff}}_{d,n}$ is with generators $s_0,s_1,\dots,s_{n-1}$ and $t_1, t_2, \ldots, t_n$, with defining relations \eqref{affinebraid} and
\begin{equation}\label{framedaffinebraid}
  \begin{array}{rclcl}
    t_j^d   & =  &  1 && \mbox{for all $j=1,\ldots,n$,}\\[0.1em]
    t_it_j & =  & t_jt_i &&  \mbox{for all $i,j=1,\ldots,n$,}\\[0.1em]
    t_js_0 & = & s_0t_j && \mbox{for all $j=1,\ldots,n$,}\\[0.1em]
    t_js_i & = & s_i t_{s_i(j)} && \mbox{for all  $j=1,\ldots,n$ and $i=1,\ldots,n-1$.}\\[0.1em]
  \end{array}
\end{equation}

\subsection{Framed affine braid group and centralizers} 

As before for the one-boundary situation, we add into the picture an $A$-module $M=M_1\boxtimes \cdots\boxtimes M_d$ where each $M_b$ is an $A_b$-module, and we look at the centralizer of $A$ in $M\otimes V^{\otimes n}$.

Note that we keep our assumption \eqref{braid_in_Ab} of the existence of a morphism from the usual braid group to each centralizer $\End_{A_b}(V_b^{\otimes n})$. Here, we assume moreover that, for all $b\in\{1,\dots,d\}$, we have an element $\check{K}^{(b)}\in \End_{A_b}(M_b\otimes V_b)$ satisfying on $M_b\otimes V_b\otimes V_b$ 
\[
  \check{K}^{(b)}\check{R}^{(b)}\check{K}^{(b)}\check{R}^{(b)}=\check{R}^{(b)}\check{K}^{(b)}\check{R}^{(b)}\check{K}^{(b)}\ ,
\]
where we have extended $\check{K}^{(b)}$ to $M_b\otimes V_b^{\otimes 2}$ by acting trivially on the last factor and $\check{R}^{(b)}$ to $M_b\otimes V_b^{\otimes 2}$ by acting trivially on the first factor. This means that we have the following morphism:
\begin{equation}\label{braidaff_in_Ab}
  \phi^{\mathrm{aff}}_b\ \ \ :\ \
  \begin{array}{rcl} \Bbbk\mathcal{B}^{\mathrm{aff}}_n & \to & \End_{A_b}(M_b\otimes V_b^{\otimes n})\\[0.4em]
    s_0 & \mapsto & \check{K}^{(b)}\ ,\\[0.4em]
    s_i & \mapsto & \check{R}^{(b)}_i\ \ \ (i=1,\dots,n-1)\ ,
  \end{array}
\end{equation}
where we have extended naturally $\check{K}^{(b)}$ to act on $M_b\otimes V_b^{\otimes n}$ by acting trivially on all but the two first factors (also $\check{R}_i^{(b)}$ acts on $V_b^{\otimes n}$ as before and trivially on $M_b$). The morphisms $\phi^{\mathrm{aff}}_b$ extend the morphisms $\phi_b$ from \eqref{braid_in_Ab}.

\begin{ex}
  The main example of such maps arises from the double braiding in a braided category. If $M_b$ and $V_b$ are two objects of a braided category, the map $\check{K}^{(b)}=\check{R}_{V_b,M_b}\check{R}_{M_b,V_b}$ satisfy our assumptions. In the case of quantum groups, this situation have been considered, for example, in \cite{OR}.
\end{ex}

Our goal now is to define elements realizing the framed affine braid group in the centralizer $\End_A(M\otimes V^{\otimes n})$. We have already the action of the framed braid group on $V^{\otimes n}$, realized by the elements $\tau_1,\dots,\tau_n$ and $\sigma_1,\dots,\sigma_{n-1}$. We extend naturally these actions to $M\otimes V^{\otimes n}$ by acting as the identity on $M$. 
So it remains to define an operator $\sigma_0\colon M\otimes V \rightarrow M\otimes V$. Recall that $V=V_1\oplus\dots \oplus V_d$. We will define $\sigma_0$ on each summand in the direct sum
\[
  M\otimes V=M\otimes V_1\oplus\dots\oplus M\otimes V_d\ .
\]
Let $b\in\{1,\dots,d\}$ and write
\[
  M\otimes V_b=(M_1\boxtimes \dots\boxtimes M_b\boxtimes\dots\boxtimes M_d)\otimes V_b\ .
\]
In words, the action of $\sigma_0$ is defined by acting with $\check{K}^{(b)}$ on legs $M_b$ and $V_b$ of the tensor product and trivially on the other factors. More precisely, we define
\[
  \sigma_0\ :\ (m_1\otimes \dots\otimes m_b\otimes\dots\otimes m_d)\otimes v_b\mapsto \sum (m_1\otimes \dots\otimes m'\otimes\dots\otimes m_d)\otimes v'\ ,
\]
where $\check{K}^{(b)}(m_b\otimes v_b)=\sum m'\otimes v'$. Of course $\sigma_0$ is extended to $M\otimes V^{\otimes n}$ by acting trivially on the last $n-1$ factors.

We have the natural one-boundary generalization of \cref{thm:action-framed}. Recall that the maps $\phi^{\mathrm{aff}}_b$ denote the morphisms from the affine braid group to the centralizers $\End_{A_b}(M_b\otimes V_b^{\otimes n})$, set up in \eqref{braidaff_in_Ab}.

\begin{thm}\label{thm:action-framed-aff}
  Let $n\geq 1$.
  \begin{enumerate}[label=(\roman*)]
  \item We have the following morphism into the centralizer of $A$ in $M\otimes V^{\otimes n}$:
    \[
      \Phi^{\mathrm{aff}}\ :\
      \begin{array}{rcl}
        \Bbbk\mathcal{FB}^{\mathrm{aff}}_{d,n} & \rightarrow &  \End_A(M\otimes V^{\otimes n})\\[0.4em]
        t_i & \mapsto & \tau_i\ \ \ \ (i=1,\dots,n)\,,\\[0.4em]
        s_i & \mapsto & \sigma_i\ \ \ \ (i=0,\dots,n-1)\,.
      \end{array}
    \]
  \item We have:
    \begin{equation}\label{iso-imageframed-aff}
      \Phi^{\mathrm{aff}}(\Bbbk\mathcal{FB}^{\mathrm{aff}}_{d,n}) \simeq \bigoplus_{\nu \vDash_d n}\Mat_{\binom{n}{\nu}}\Bigl(\phi^{\mathrm{aff}}_1(\Bbbk\mathcal{B}^{\mathrm{aff}}_{\nu_1})\otimes \cdots \otimes \phi^{\mathrm{aff}}_d(\Bbbk\mathcal{B}^{\mathrm{aff}}_{\nu_d})\Bigr)\,.
    \end{equation}
  \end{enumerate}
\end{thm}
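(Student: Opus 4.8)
The plan is to mirror the proof of \cref{thm:action-framed} as closely as possible, since the only new feature is the boundary module $M$ and the extra generator $s_0$. First I would verify item (i), namely that the proposed assignments land in the centralizer and respect the defining relations \eqref{affinebraid} and \eqref{framedaffinebraid} of $\mathcal{FB}^{\mathrm{aff}}_{d,n}$. The relations involving only the $t_i$'s and the $s_i$'s with $i\geq 1$ are handled exactly as in \cref{thm:action-framed}, since $\tau_i,\sigma_i$ act trivially on $M$ and the decomposition \eqref{proof_dec} (tensored on the left with $M$) is unchanged. The genuinely new verifications are: that $\sigma_0$ commutes with the action of $A$; that $\sigma_0$ satisfies $s_0t_j=t_js_0$ for all $j$ and $s_it_j=t_{s_i(j)}s_i$ interaction with $t_1$; and that the type-$B$ braid relation $\sigma_0\sigma_1\sigma_0\sigma_1=\sigma_1\sigma_0\sigma_1\sigma_0$ holds.

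For the $A$-linearity of $\sigma_0$, I would observe that on the summand $M\otimes V_b$ the operator $\sigma_0$ is $\check{K}^{(b)}$ acting on the legs $M_b,V_b$ and the identity elsewhere, and $\check{K}^{(b)}\in\End_{A_b}(M_b\otimes V_b)$ by hypothesis; since the other tensor factors of $A$ act only through $\epsilon_c$ on the trivial legs, the whole thing is $A$-linear. For the commutation $s_0t_j=t_js_0$: the operator $\tau_j$ for $j\geq 2$ acts on a factor untouched by $\sigma_0$, hence commutes trivially, while $\tau_1$ acts as the scalar $\zeta^{b-1}$ on every summand $M\otimes V_b$, and $\sigma_0$ preserves each such summand (because $\check{K}^{(b)}$ maps $M_b\otimes V_b$ to itself), so $\tau_1$ and $\sigma_0$ commute as well. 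The relation $s_1t_j=t_{s_1(j)}s_1$ and the commutations $s_0s_i=s_is_0$ for $i\geq 2$ are immediate since the operators act on disjoint sets of tensor legs.

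The main obstacle is the type-$B$ braid relation $\sigma_0\sigma_1\sigma_0\sigma_1=\sigma_1\sigma_0\sigma_1\sigma_0$, which is the only place where the boundary hypothesis
\[
  \check{K}^{(b)}\check{R}^{(b)}\check{K}^{(b)}\check{R}^{(b)}=\check{R}^{(b)}\check{K}^{(b)}\check{R}^{(b)}\check{K}^{(b)}
\]
is used and is the only relation not reducible to the earlier theorem. I would prove it by restricting to $M\otimes V_b\otimes V_c$ and splitting into the cases $b=c$ and $b\neq c$. When $b=c$ the action is entirely by $\check{K}^{(b)}$ and $\check{R}^{(b)}$ on $M_b\otimes V_b\otimes V_b$, and the identity is exactly the assumed relation. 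When $b\neq c$, the operator $\sigma_1$ acts as the flip $P$ exchanging the two $V$-legs (mapping $M\otimes V_b\otimes V_c$ to $M\otimes V_c\otimes V_b$), and $\sigma_0$ acts as $\check{K}^{(b)}$ on the summand whose first $V$-leg is $V_b$; a direct bookkeeping of how $P$ interchanges which leg $\check{K}$ acts on shows both sides reduce to the same composite, so no compatibility beyond the case $b=c$ is needed. This case analysis is the analogue of the ``several cases depending on which indices are equal'' step in the proof of \cref{thm:action-framed}.

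For item (ii), I would follow verbatim the argument for \eqref{iso-imageframed}. Using \eqref{proof_dec} tensored with $M$, the commuting operators $\tau_1,\dots,\tau_n$ separate the summands $M\otimes V_{a_1}\otimes\dots\otimes V_{a_n}$ by their eigenvalues, so all projections onto summands lie in the image of $\Phi^{\mathrm{aff}}$; the flip operators (built from suitable $\sigma$'s) identify summands corresponding to the same composition $\nu$, giving the matrix units of $\Mat_{\binom{n}{\nu}}$; and on the canonical summand $M_1\otimes V_1^{\otimes\nu_1}\otimes\dots\otimes M_d\otimes V_d^{\otimes\nu_d}$ the restrictions of $\sigma_0$ (acting in the $b$-th block as $\check K^{(b)}$) together with the relevant $\sigma_i$'s generate precisely $\phi^{\mathrm{aff}}_1(\Bbbk\mathcal{B}^{\mathrm{aff}}_{\nu_1})\otimes\dots\otimes\phi^{\mathrm{aff}}_d(\Bbbk\mathcal{B}^{\mathrm{aff}}_{\nu_d})$. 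The only point requiring a word of care is that $\sigma_0$ is supported on the $M$-leg and the first $V$-leg, so under the block decomposition it contributes the generator $s_0$ of $\mathcal{B}^{\mathrm{aff}}_{\nu_b}$ in exactly the block $b$ determined by the first index, which is what the tensor product on the right-hand side of \eqref{iso-imageframed-aff} records.
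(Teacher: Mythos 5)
Your item (i) follows the paper's proof essentially verbatim: same decomposition of $M\otimes V^{\otimes n}$ into summands $M\otimes V_{a_1}\otimes\cdots\otimes V_{a_n}$, same check that $\sigma_0$ is $A$-linear and commutes with the $\tau_j$'s and with $\sigma_i$ for $i\geq 2$, and the same two-case analysis of the type-$B$ braid relation ($a_1=a_2$ reducing to the hypothesis on $\check{K}^{(b)}$, and $a_1\neq a_2$ being a flip bookkeeping that both sides send $m_a\otimes m_b\otimes v_a\otimes v_b$ to $\sum m'\otimes m''\otimes v'\otimes v''$). This part is correct.

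Item (ii) has a genuine gap at the final step. The tensor factorization $\phi^{\mathrm{aff}}_1(\Bbbk\mathcal{B}^{\mathrm{aff}}_{\nu_1})\otimes\cdots\otimes\phi^{\mathrm{aff}}_d(\Bbbk\mathcal{B}^{\mathrm{aff}}_{\nu_d})$ is an algebra of endomorphisms of the \emph{single} canonical summand $M\otimes V_1^{\otimes\nu_1}\otimes\cdots\otimes V_d^{\otimes\nu_d}$, and on that summand the first $V$-leg is always a $V_1$-leg, so the restriction of $\sigma_0$ is $\check{K}^{(1)}$ acting on $M_1$ and that leg --- it yields only $\phi^{\mathrm{aff}}_1(s_0)$. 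Your claim that $\sigma_0$ ``contributes the generator $s_0$ of $\mathcal{B}^{\mathrm{aff}}_{\nu_b}$ in exactly the block $b$ determined by the first index'' confuses the behaviour of $\sigma_0$ across different summands (where the first index varies) with its restriction to the one summand that carries the block algebra; as written, your argument produces the affine generator only in the first tensor factor, and the remaining factors would come out as images of the ordinary braid groups $\mathcal{B}_{\nu_b}$, so \eqref{iso-imageframed-aff} would not follow. The missing ingredient, supplied in the paper, is to realize $\phi^{\mathrm{aff}}_b(s_0)$ for $b\geq 2$ as a conjugate, e.g.\ $\phi^{\mathrm{aff}}_2(s_0)$ is the restriction of $\sigma_{\nu_1}\cdots\sigma_1\sigma_0\sigma_1^{-1}\cdots\sigma_{\nu_1}^{-1}$: the surrounding $\sigma_i$'s act as flips (they exchange legs with distinct labels), so they bring the $V_2$-leg in position $\nu_1+1$ to the front, where $\sigma_0$ acts as $\check{K}^{(2)}$, and then return it. One could alternatively conjugate $\sigma_0$ by the matrix units you already constructed, but some such argument must be made explicit.
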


\begin{proof}
  (i) We only need to check that $\sigma_0$ is in the centralizer and that the relations of $\mathcal{FB}^{\mathrm{aff}}_{d,n}$ involving $s_0$ are satisfied. Write the following immediate decomposition as a direct sum of $A$-modules:
  \begin{equation}\label{proof_dec_aff2}
    M\otimes V^{\otimes n}=\bigoplus_{a_1,\dots,a_n=1}^d M\otimes V_{a_1}\otimes\dots\otimes V_{a_n}\ .
  \end{equation}
  On a given summand, the map $\sigma_0$ acts as $\check{K}^{(a_1)}$ on the factors $M_{a_1}$ and $V_{a_1}$ and trivially elsewhere. This commutes with $A$ since only $A_{a_1}$ acts non-trivially on these factors and its action is centralized by $\check{K}^{(a_1)}$.
  
  For the relations of $\mathcal{FB}^{\mathrm{aff}}_{d,n}$ that we need to check, the commutation of $\sigma_0$ with all generators $\tau_i$ is immediate since $\tau_i$ acts as the multiple of the identity $\zeta^{a_i-1}\Id$ on the summand $M\otimes V_{a_1}\otimes\dots\otimes V_{a_n}$, which is stable by $\sigma_0$. The commutation of $\sigma_0$ with $\sigma_2,\dots,\sigma_{n-1}$ is also immediate. For the braid relation involving $s_0$, let us apply it on one given summand $M\otimes V_{a_1}\otimes\dots\otimes V_{a_n}$.
  
  If $a_1=a_2=a$, the relation acts non-trivially only on the factors $M_a,V_{a_1},V_{a_2}$ of the tensor product, and it becomes on these factors the relation $\check{K}^{(a)}\check{R}^{(a)}\check{K}^{(a)}\check{R}^{(a)}=\check{R}^{(a)}\check{K}^{(a)}\check{R}^{(a)}\check{K}^{(a)}$. This is satisfied by our running hypothesis \eqref{braidaff_in_Ab}.
  
  If $a_1=a$ and $a_2=b$ with $a\neq b$, the relation acts non-trivially only on the factors $M_a,M_b,V_{a},V_{b}$ of the tensor product. Ignoring the other factors, let us apply it on a vector of the form $m_a\otimes m_b\otimes v_a\otimes v_b$. Denote $K^{(a)}(m_a\otimes v_a)=\sum m'\otimes v'$ and $K^{(b)}(m_b\otimes v_b)=\sum m''\otimes v''$, and recall that $\sigma_1$ acts on $v_a\otimes v_b$ as the flip. An easy calculation shows that both sides of the relation $\sigma_0\sigma_1\sigma_0\sigma_1=\sigma_1\sigma_0\sigma_1\sigma_0$ gives the same result:
  \[
    m_a\otimes m_b\otimes v_a\otimes v_b\mapsto \sum m'\otimes m''\otimes v'\otimes v''\ .
  \]
  
  \vskip .1cm
  (ii) We can reproduce verbatim the reasoning in the proof of item (ii) of \cref{thm:action-framed} until we are left with checking that we have in the image of $\Phi^{\mathrm{aff}}$ the subalgebra  
  \[
    \phi^{\mathrm{aff}}_1(\Bbbk\mathcal{B}^{\mathrm{aff}}_{\nu_1})\otimes \cdots \otimes \phi^{\mathrm{aff}}_d(\Bbbk\mathcal{B}^{\mathrm{aff}}_{\nu_d})
  \]
  of the endomorphism algebra for the summand
  \[
    M\otimes V_1^{\otimes \nu_1}\otimes \cdots\otimes V_d^{\otimes \nu_d}\ .
  \]
  The first factor $\phi^{\mathrm{aff}}_1(\Bbbk\mathcal{B}^{\mathrm{aff}}_{\nu_1})$ is generated by the operators $\sigma_0,\sigma_1,\dots,\sigma_{\nu_1-1}$. For the action $\phi^{\mathrm{aff}}_2(s_0)$ of the generator $s_0$ of $\mathcal{B}^{\mathrm{aff}}_{\nu_2}$, one needs to consider the operator $\sigma_{\nu_1}\dots \sigma_1\sigma_0\sigma_1^{-1}\dots \sigma_{\nu_1}^{-1}$, since the $\sigma_i$'s around $\sigma_0$ act as permutation operators in this case. The remaining generators of $\phi_2^{\mathrm{aff}}(\Bbbk\mathcal{B}^{\mathrm{aff}}_{\nu_2})$ are obtained with $\sigma_{\nu+1},\dots,\sigma_{\nu_1+\nu_2-1}$. And similarly one can get any $\phi_b^{\mathrm{aff}}(\Bbbk\mathcal{B}^{\mathrm{aff}}_{\nu_b})$ in the subalgebra generated by $\sigma_0,\sigma_1,\dots,\sigma_{n-1}$.
\end{proof}

With the same reasoning as before \cref{cor:action-framed}, we get its analogue for the one-boundary case.

\begin{cor}\label{cor:iso_imageframed_aff}
  Assume that for all $b=1,\dots,d$, the image of the affine braid group $\mathcal{B}^{\mathrm{aff}}_{n}$ generates the centralizer algebra $\End_{A_b}(M_b\otimes V_b^{\otimes n})$ for all $n\geq 0$. Then we have
  \begin{equation}\label{iso-imageframed-aff2}
    \Phi^{\mathrm{aff}}(\Bbbk\mathcal{FB}^{\mathrm{aff}}_{d,n}) \simeq \bigoplus_{\nu \vDash_d n}\Mat_{\binom{n}{\nu}}\Bigl(\End_{A_1}(M_1\otimes V_1^{\otimes \nu_1})\otimes \cdots \otimes \End_{A_d}(M_d\otimes V_d^{\otimes \nu_d})\Bigr)\,.
  \end{equation}
  If moreover we assume that $\Hom_{A_b}(M_b\otimes V_b^{\otimes r},M_b\otimes V_b^{\otimes s}) = 0$ when $r\neq s$, then the centralizer algebra $\End_{A}(M\otimes V^{\otimes n})$ is generated by the image of $\mathcal{FB}^{\mathrm{aff}}_{d,n}$.
\end{cor}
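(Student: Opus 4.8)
The plan is to follow verbatim the reasoning that produced \cref{cor:action-framed} in the non-affine setting, replacing the morphisms $\phi_b$ by their affine counterparts $\phi^{\mathrm{aff}}_b$ and \cref{lem:iso-matrix-alg} by its one-boundary generalization. The starting point is the second item of \cref{thm:action-framed-aff}, which already identifies the image $\Phi^{\mathrm{aff}}(\Bbbk\mathcal{FB}^{\mathrm{aff}}_{d,n})$ with the direct sum over $\nu\vDash_d n$ of matrix algebras of size $\binom{n}{\nu}$ having entries $\phi^{\mathrm{aff}}_1(\Bbbk\mathcal{B}^{\mathrm{aff}}_{\nu_1})\otimes \cdots \otimes \phi^{\mathrm{aff}}_d(\Bbbk\mathcal{B}^{\mathrm{aff}}_{\nu_d})$.

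First I would invoke the surjectivity hypothesis: for each $b$, the assumption that the image of $\mathcal{B}^{\mathrm{aff}}_{\nu_b}$ generates $\End_{A_b}(M_b\otimes V_b^{\otimes \nu_b})$ means precisely that $\phi^{\mathrm{aff}}_b(\Bbbk\mathcal{B}^{\mathrm{aff}}_{\nu_b})=\End_{A_b}(M_b\otimes V_b^{\otimes \nu_b})$. Substituting these equalities into \eqref{iso-imageframed-aff} then yields \eqref{iso-imageframed-aff2} directly, which settles the first assertion with no further computation.

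For the second assertion, the key is to compare this description of the image with the full centralizer. The one-boundary generalization of \cref{lem:iso-matrix-alg} states, under the additional hypothesis $\Hom_{A_b}(M_b\otimes V_b^{\otimes r},M_b\otimes V_b^{\otimes s})=0$ for $r\neq s$, that $\End_A(M\otimes V^{\otimes n})$ is isomorphic to exactly the same direct sum of matrix algebras appearing in \eqref{iso-imageframed-aff2}. Since $\Phi^{\mathrm{aff}}$ takes values in $\End_A(M\otimes V^{\otimes n})$, its image is a subalgebra of the centralizer; under both hypotheses this subalgebra and the whole centralizer are carried onto the same direct sum. I would therefore conclude that the image fills the entire centralizer, so that $\End_A(M\otimes V^{\otimes n})$ is generated by $\Phi^{\mathrm{aff}}(\Bbbk\mathcal{FB}^{\mathrm{aff}}_{d,n})$.

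The only point requiring care — and the step I expect to be the genuine, if mild, obstacle — is ensuring that the two matrix-algebra descriptions are compatible, i.e.\ that the subalgebra produced by \cref{thm:action-framed-aff} sits inside the centralizer compatibly with the block decomposition used in the one-boundary lemma, rather than merely being abstractly isomorphic to it. This holds because both identifications are built from the very same decomposition \eqref{proof_dec_aff2} of $M\otimes V^{\otimes n}$ and from grouping its summands according to the composition $\nu$; with that observation the inclusion of the image into the centralizer becomes an equality, completing the argument.
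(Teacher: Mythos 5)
Your proposal is correct and follows essentially the same route as the paper, which gives no separate proof but simply says the corollary is obtained ``with the same reasoning as before \cref{cor:action-framed}'': substitute the surjectivity of the $\phi^{\mathrm{aff}}_b$ into \eqref{iso-imageframed-aff} and then compare with the one-boundary version of \cref{lem:iso-matrix-alg}. Your closing remark that both identifications arise from the same decomposition \eqref{proof_dec_aff2} correctly supplies the (implicit) compatibility needed to upgrade the abstract isomorphism to an equality of subalgebras.
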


\begin{ex}
  When $A_b=U_q(\mathfrak{gl}_N)$ and $V_b$ is the vector representation, the surjectivity is satisfied \cite{OR}. If $M_b$ is a finite-dimensional irreducible representations, the other assumption is satisfied as well. Again, one has to be careful for this second assumption in other examples such as $U_q(\mathfrak{sl}_N)$.
\end{ex}

\section{Recollection of framizations of algebras}
\label{sec:recollection}

In this section, we consider several algebras appearing in the literature as framizations and we provide a new example for the Birman--Murakami--Wenzl algebra. Each example will be seen as a centralizer algebra for a certain (product of) quantized enveloping algebras.

In this section, we take an indeterminate $q$ and the field $\Bbbk=\mathbb{C}(q)$.

\subsection{The quantum Schur--Weyl duality}
\label{sec:SW}

We recall the well-known statements called quantum Schur--Weyl duality \cite{Jim, Res}. Let $N>1$ and $U_q(\mathfrak{gl}_N)$ denote the quantum group associated to the Lie algebra $\mathfrak{gl}_N$. Let $V$ be the vector representation of $U_q(\mathfrak{gl}_N)$. The centralizer $\End_{U_q(\mathfrak{gl}_N)}(V^{\otimes n})$ is described with the help of the Hecke algebra $\Hec_n$. We fix the normalizations such that the Hecke algebra $\Hec_n$ is defined as the quotient of the braid group algebra $\Bbbk\mathcal{B}_n$ by the following quadratic relations:
\begin{equation}\label{Hecke_relation}
  s_i^2=(q-q^{-1})s_i+1\,,\ \ \ \ \ i=1,\dots,n-1.
\end{equation}

\begin{thm}\label{thm_SW_Hecke}
  There is a surjective morphism from the Hecke algebra to the centralizer:
  \begin{equation}
    \label{Hecke_in_Ab}
    \phi\ \ :\ \ \Hec_n \to \End_{U_q(\mathfrak{gl}_N)}(V^{\otimes n})\ .
  \end{equation}
  It is an isomorphism if and only if $n\leq N$.
\end{thm}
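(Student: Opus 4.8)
The plan is to establish the quantum Schur--Weyl duality for $U_q(\mathfrak{gl}_N)$ in two parts: first the existence of the surjective morphism $\phi$, and then the precise condition $n \leq N$ for it to be an isomorphism. For the first part, I would begin by recalling that the universal $R$-matrix of $U_q(\mathfrak{gl}_N)$ acting on $V\otimes V$ yields an operator $\check R \in \End_{U_q(\mathfrak{gl}_N)}(V^{\otimes 2})$ that commutes with the coproduct action, and which is well known to satisfy the Hecke quadratic relation $(\check R - q)(\check R + q^{-1}) = 0$ after the chosen normalization in \eqref{Hecke_relation}. Since $\check R_i$ and $\check R_j$ commute for $|i-j|>1$ and satisfy the braid relation (a consequence of the quantum Yang--Baxter equation), one gets an algebra morphism $\Bbbk\mathcal B_n \to \End_{U_q(\mathfrak{gl}_N)}(V^{\otimes n})$ factoring through $\Hec_n$ by the quadratic relation, giving $\phi$.

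For surjectivity, the cleanest route is via classical Schur--Weyl duality and a specialization/semisimplicity argument. One shows that the centralizer $\End_{U_q(\mathfrak{gl}_N)}(V^{\otimes n})$ has dimension independent of $q$ (it equals the dimension of the classical centralizer $\End_{\mathfrak{gl}_N}((\mathbb C^N)^{\otimes n})$, since $V^{\otimes n}$ decomposes into the same multiplicity pattern of highest-weight modules as in the classical case, the quantum group being a flat deformation). By classical Schur--Weyl duality this dimension is $\sum_{\lambda} (\text{mult})^2$ summed over partitions $\lambda$ of $n$ with at most $N$ rows. On the other side, the image $\phi(\Hec_n)$ is a quotient of the generically semisimple algebra $\Hec_n$, whose simple modules are also indexed by partitions of $n$. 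Matching Bratteli diagrams — equivalently, checking that the simple $\Hec_n$-modules indexed by partitions with at most $N$ rows all survive in the image while those with more than $N$ rows act as zero — shows $\phi(\Hec_n)$ has exactly the same dimension as the full centralizer, forcing surjectivity.

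For the isomorphism claim, I would compare dimensions. The Hecke algebra $\Hec_n$ (generic $q$) is semisimple of dimension $n!$ with simple modules $S^\lambda$ indexed by \emph{all} partitions $\lambda \vdash n$, of dimensions given by the hook-length formula. The centralizer, being semisimple, is a direct sum of matrix algebras indexed only by partitions with $\ell(\lambda)\le N$. Thus $\phi$ is injective if and only if no partition of $n$ has more than $N$ rows, which happens precisely when $n \leq N$: for $n \le N$ every partition of $n$ fits within $N$ rows, whereas for $n > N$ there exists a partition (e.g. the column $(1^n)$) with more than $N$ rows, so the corresponding simple $\Hec_n$-module lies in the kernel and $\phi$ fails to be injective. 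Combined with surjectivity, this gives the isomorphism exactly when $n \le N$.

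The main obstacle is the surjectivity statement, i.e. proving that the image of the braid group already generates the \emph{entire} commutant, rather than just a subalgebra. The dimension-counting argument relies on the flatness of the quantum deformation (so that quantum and classical centralizer dimensions agree) together with the double centralizer theorem in the semisimple setting; making the comparison of Bratteli diagrams fully rigorous — tracking how simple $\Hec_n$-modules restrict and which survive the quotient $\phi$ — is the delicate point. Rather than reprove this from scratch, I would invoke the classical references \cite{Jim,Res} where this quantum Schur--Weyl duality is established, and simply record the normalization and the sharp bound $n\le N$ here.
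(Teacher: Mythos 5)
Your proposal is correct, and it is the standard argument: the paper itself gives no proof of this statement, recalling it as a known result from \cite{Jim,Res}, which is exactly what you do in your final paragraph after sketching the usual reasoning (Hecke quadratic relation for $\check R$, Yang--Baxter for the braid relations, flatness plus double centralizer for surjectivity, and the count of partitions with at most $N$ rows for the bound $n\leq N$). Your sketch is consistent with those references and with the paper's subsequent remark that for $n>N$ the kernel is generated by the $q$-antisymmetrizer on $N+1$ points, which is precisely the simple module $(1^{N+1})$ obstruction you identify.
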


Moreover, if $n>N$, the kernel of $\phi$ is generated by the $q$-antisymmetrizer on $N+1$-points.
For $N=2$, the (unnormalised) $q$-antisymmetrizer on $3$ points is given by
\begin{equation}\label{qAnti_3}
  \Lambda_3=1-q^{-1}s_1-q^{-1}s_2+q^{-2}s_1s_2+q^{-2}s_2s_1-q^{-3}s_1s_2s_1\ .
\end{equation}
% A recursive formula for the general $q$-antisymmetrizer is:
%\begin{equation}\label{qAnti_N}
%  \Lambda_{N+1}=\Lambda_N\cdot (1-q^{-1}s_{N}+q^{-2}s_Ns_{N-1}+\dots+(-q^{-1})^Ns_Ns_{N-1}\dots s_1)\ .
%\end{equation}

\subsection{The Yokonuma--Hecke algebra}
\label{sec:YH}

For each $b=1,\dots,d$, we take $A_b$ to be the quantum group $U_q(\mathfrak{gl}_{N_b})$, for some integer $N_b>1$, and $V_b$ its vector representation of dimension $N_b$. Then the algebra $A$ is 
\[
  A=U_q(\mathfrak{gl}_{N_1})\boxtimes \cdots\boxtimes U_q(\mathfrak{gl}_{N_d})\cong U_q(\mathfrak{gl}_{N_1}\oplus\dots\oplus\mathfrak{gl}_{N_d})\,,
\]
the quantum group associated to the Lie algebra of block-diagonal matrices with block sizes $N_1,\dots,N_d$. The representation $V$ of $A$ is the natural vector representation of dimension $N_1+\cdots+N_d$.

The Yokonuma--Hecke algebra $\YH_{d,n}$ is defined as the quotient of the framed braid group algebra $\Bbbk \mathcal{FB}_{d,n}$ by the additional relation
\begin{equation}\label{YHecke_relation}
  s_i^2=(q-q^{-1})E_{i}s_i+1\,,\ \ \ \ \ i=1,\dots,n-1,
\end{equation}
where we recall that $E_{i}=\frac{1}{d}\sum_{s=1}^dt_i^st_{i+1}^{-s}$.

\begin{thm}\label{thm_YH}
  There exists a surjective homomorphism 
  \[
    \Phi\ \ :\ \ \YH_{d,n}\to\End_{A}(V^{\otimes n})\ .
  \]
  It is an isomorphism if and only if $n \leq N_b$ for all $1 \leq b \leq d$.
\end{thm}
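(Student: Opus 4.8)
The plan is to combine the general framework established in \cref{thm:action-framed} with the classical quantum Schur--Weyl duality recalled in \cref{thm_SW_Hecke}. The key observation is that the Yokonuma--Hecke algebra $\YH_{d,n}$ is by definition the quotient of $\Bbbk\mathcal{FB}_{d,n}$ by the relations \eqref{YHecke_relation}, so to obtain the morphism $\Phi$ it suffices to check that the image $\Phi(\Bbbk\mathcal{FB}_{d,n})\subseteq\End_A(V^{\otimes n})$ already satisfies these quadratic relations. First I would recall that for $A_b=U_q(\mathfrak{gl}_{N_b})$ with $V_b$ the vector representation, the map $\check R^{(b)}$ satisfies the Hecke characteristic equation, i.e. $\check R^{(b)}$ has eigenvalues $q$ and $-q^{-1}$, so that $(\check R^{(b)})^2=(q-q^{-1})\check R^{(b)}+1$ uniformly for all $b$. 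Applying the mechanism of the paragraph ``Framization of a characteristic equation'', with $k=2$, $\lambda_1=q$, $\lambda_2=-q^{-1}$, the relations \eqref{chareq} become $\varepsilon_i(\sigma_i^2-(q-q^{-1})\sigma_i-1)=0$ and $(1-\varepsilon_i)(\sigma_i^2-1)=0$. Using \cref{action_E}, which identifies $\varepsilon_i=\Phi(E_i)$, a linear combination of these two relations yields precisely $\sigma_i^2=(q-q^{-1})\varepsilon_i\sigma_i+1$, which is the image under $\Phi$ of \eqref{YHecke_relation}. This produces the well-defined surjective homomorphism $\Phi\colon\YH_{d,n}\to\Phi(\Bbbk\mathcal{FB}_{d,n})\subseteq\End_A(V^{\otimes n})$.

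For the surjectivity onto the full centralizer and the isomorphism criterion, I would invoke part (ii) of \cref{thm:action-framed} together with \cref{thm_SW_Hecke}. Since each $\phi_b\colon\Bbbk\mathcal{B}_{\nu_b}\to\End_{A_b}(V_b^{\otimes\nu_b})$ factors through the Hecke algebra $\Hec_{\nu_b}$ and is surjective by \cref{thm_SW_Hecke}, the image $\Phi(\Bbbk\mathcal{FB}_{d,n})$ equals $\bigoplus_{\nu\vDash_d n}\Mat_{\binom{n}{\nu}}\bigl(\End_{A_1}(V_1^{\otimes\nu_1})\otimes\cdots\otimes\End_{A_d}(V_d^{\otimes\nu_d})\bigr)$, which by \cref{lem:iso-matrix-alg} is the full centralizer $\End_A(V^{\otimes n})$; here one uses that the vector representation of $U_q(\mathfrak{gl}_{N_b})$ satisfies $\Hom_{A_b}(V_b^{\otimes r},V_b^{\otimes s})=0$ for $r\neq s$, since $V_b^{\otimes r}$ lives in a single weight-degree. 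Thus $\Phi$ is surjective onto $\End_A(V^{\otimes n})$.

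To pin down when $\Phi$ is an isomorphism, I would compare dimensions or, more conceptually, compare the defining relations of $\YH_{d,n}$ with the matrix description. The Yokonuma--Hecke algebra admits the analogous decomposition $\YH_{d,n}\cong\bigoplus_{\nu\vDash_d n}\Mat_{\binom{n}{\nu}}(\Hec_{\nu_1}\otimes\cdots\otimes\Hec_{\nu_d})$, which can itself be deduced from \cref{thm:action-framed}(ii) applied to the \emph{generic} case where each $\phi_b$ is replaced by the identity of the abstract Hecke algebra (this is the content of \cref{rem:iso_matrix} alluded to in the introduction, or a standard fact about $\YH_{d,n}$). Under this matching, $\Phi$ is the direct sum over $\nu$ of matrix algebras of the maps $\phi\colon\Hec_{\nu_b}\to\End_{A_b}(V_b^{\otimes\nu_b})$, so $\Phi$ is an isomorphism precisely when every such $\phi$ is. By \cref{thm_SW_Hecke} the map $\Hec_{\nu_b}\to\End_{U_q(\mathfrak{gl}_{N_b})}(V_b^{\otimes\nu_b})$ is an isomorphism if and only if $\nu_b\leq N_b$. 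As $\nu$ ranges over all $d$-compositions of $n$, each $\nu_b$ attains every value up to $n$, so all the component maps are isomorphisms if and only if $n\leq N_b$ for all $b$, giving the stated criterion.

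I expect the main obstacle to be establishing the matrix decomposition of the \emph{abstract} algebra $\YH_{d,n}$ cleanly, so that the isomorphism question reduces factor-by-factor to the Hecke case; verifying the quadratic relation \eqref{YHecke_relation} and the surjectivity are by contrast immediate consequences of the framework already built. One must take care that the failure of injectivity for some single $\nu$ with $\nu_b>N_b$ genuinely forces $\Phi$ to fail to be injective overall, which follows because the kernel of $\phi$ on $\Hec_{\nu_b}$ (generated by the $q$-antisymmetrizer) produces a nonzero kernel in the corresponding matrix block.
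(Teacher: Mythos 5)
Your construction of $\Phi$ and the surjectivity argument match the paper exactly: the quadratic relation \eqref{YHecke_relation} is verified via the characteristic-equation mechanism \eqref{chareq} and \cref{action_E}, and surjectivity onto the full centralizer follows from \cref{thm:action-framed}(ii) together with \cref{thm_SW_Hecke} and the weight-grading argument for $\Hom_{A_b}(V_b^{\otimes r},V_b^{\otimes s})=0$ (this is precisely \cref{cor:action-framed}).

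The gap is in your treatment of the isomorphism criterion. You want to first establish the abstract decomposition $\YH_{d,n}\cong\bigoplus_{\nu}\Mat_{\binom{n}{\nu}}(\Hec_{\nu_1}\otimes\cdots\otimes\Hec_{\nu_d})$ and then compare $\Phi$ block-by-block with the Schur--Weyl maps $\Hec_{\nu_b}\to\End_{A_b}(V_b^{\otimes\nu_b})$. But the justification you offer for that decomposition does not stand: \cref{thm:action-framed}(ii) describes only the \emph{image} $\Phi(\Bbbk\mathcal{FB}_{d,n})$ inside a concrete centralizer, and there is no ``generic case where each $\phi_b$ is replaced by the identity of the abstract Hecke algebra'' covered by that theorem --- the $\phi_b$ must land in $\End_{A_b}(V_b^{\otimes n})$ for actual bialgebras $A_b$ and modules $V_b$. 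Appealing to \cref{rem:iso_matrix} is circular, since that remark records the decomposition as a \emph{consequence} of the theorem you are proving; citing the external isomorphism theorem of \cite{JPA} would close the gap but imports a much harder result than is needed. The paper avoids all of this with an elementary dimension count: the elements $t_1^{a_1}\cdots t_n^{a_n}s_w$ with $a_i\in\{1,\dots,d\}$ and $w\in\mathfrak{S}_n$ span $\YH_{d,n}$, so $\dim\YH_{d,n}\leq n!\,d^n$, while when $n\leq N_b$ for all $b$ the image is $\bigoplus_{\nu}\Mat_{\binom{n}{\nu}}(\Hec_{\nu_1}\otimes\cdots\otimes\Hec_{\nu_d})$ of dimension $\sum_{\nu}\binom{n}{\nu}^2\nu_1!\cdots\nu_d!=n!\,d^n$; hence $\Phi$ is an isomorphism (and the abstract decomposition of $\YH_{d,n}$ falls out as a corollary rather than an input). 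Your closing remark about the ``only if'' direction is essentially right --- once one knows $\dim\YH_{d,n}=n!\,d^n$ exactly, any $N_b<n$ forces the image (computed via \cref{thm:action-framed}(ii) for the composition with $\nu_b=n$) to have strictly smaller dimension, so $\Phi$ cannot be injective --- but it again presupposes the exact dimension of $\YH_{d,n}$, which is what the spanning-set bound plus the ``if'' direction delivers.
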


\begin{proof}
  Recall that we already have the algebra morphism
  \[
    \Phi\ :\ \Bbbk\mathcal{FB}_{d,n}\ \rightarrow \ \End_A(V^{\otimes n})
  \]
  following from \cref{thm:action-framed}. From the discussion of the action of $E_i$ in \cref{sec_Eij}, namely Formula \eqref{chareq}, we know that the following relations are satisfied in the image by $\Phi$:
  \[
    E_i(s_i^2-(q-q^{-1})s_i-1)=0\ \ \ \ \text{and}\ \ \ \ (1-E_i)(s_i^2-1)=0\,.
  \]
  The sum of these two relations gives the additional relation \eqref{YHecke_relation} of the algebra $\YH_{d,n}$. Therefore, the morphism $\Phi$ factors through the algebra $\YH_{d,n}$. 
  
  The surjectivity is obtained by an application of \cref{cor:action-framed}, whose hypotheses are satisfied here.
  
  If $n\leq N_b$ for all $1\leq b \leq d$, the centralizers $\End_{A_b}(V_b^{\otimes n})$ are all isomorphic to the Hecke algebra $\Hec_n$, from the usual Schur--Weyl duality in \cref{thm_SW_Hecke}. Thus the second item of \cref{thm:action-framed} reads in this case:
  \begin{equation}\label{iso-YH}
    \Phi(\YH_{d,n}) \simeq \bigoplus_{\nu \vDash_d n}\Mat_{\binom{n}{\nu}}\bigl(\Hec_{\nu_1}\otimes \cdots \otimes \Hec_{\nu_d}\bigr)\,.
  \end{equation}
  The right hand side has dimension
  \[
    \sum_{\nu \vDash_d n}\binom{n}{\nu}^2\nu_1!\dots \nu_d!=n!\sum_{\nu \vDash_d n}\binom{n}{\nu}=n!d^n\ .
  \]
  It is easy to see that $\YH_{d,n}$ is spanned by elements $t_1^{a_1}\dots t_n^{a_n}s_w$, where $a_1,\dots,a_n\in\{1,\dots,d\}$ and elements $s_w$ are indexed by elements $w$ of the symmetric group $\mathfrak{S}_n$ (see for example \cite{CPA,Juy}). This shows that the dimension of $\YH_{d,n}$ is less or equal than $n!d^n$. We conclude that $\Phi$ is an isomorphism.
\end{proof}

\begin{rem}\label{rem:iso_matrix}
  The proof shows that the Yokonuma--Hecke algebra $\YH_{d,n}$ is isomorphic to the direct sum of matrix algebras in the right hand side of \eqref{iso-YH}. This was also shown directly in \cite{JPA} over the ring $\mathbb{C}[q,q^{-1}]$. The representation theory of $\YH_{d,n}$ can be deduced, see \cite{CPA,JPA2} and \cref{sec:consequences}.
\end{rem}

\subsection{Framization of the Temperley--Lieb algebra}
\label{sec:framed-TL}

We keep the setting of the preceding section, $A_b=U_q(\mathfrak{gl}_{N_b})$ and $V_b$ the vector representation of dimension $N_b$, and we consider the case $N_1=\dots=N_d=2$.

The Temperley--Lieb algebra $\TL_n$ is defined as the quotient of the Hecke algebra $\Hec_n$ by the additional relation, if $n>2$,
\[
  1-q^{-1}s_1-q^{-1}s_2+q^{-2}s_1s_2+q^{-2}s_2s_1-q^{-3}s_1s_2s_1=0\ .
\]
Note that this relation implies the same relation with $s_1,s_2$ replaced by $s_i,s_{i+1}$. Since all $N_b$'s are equal to $2$, the Temperley--Lieb algebra is isomorphic to the centralizer $\End_{A_b}(V_b^{\otimes n})$ for any $b=1,\dots,d$, as was recalled in \cref{sec:SW}.

The following analogue of the Temperley--Lieb algebra in the framized situation was defined in \cite{GJKL}, see also \cite{CP,Gou}.

\begin{defn}
  The framization of the Temperley--Lieb algebra, denoted $\FTL_{d,n}$, is the quotient of the Yokonuma--Hecke algebra $\YH_{d,n}$ by the relation:
  \begin{equation}\label{rel-FTL}
    E_1E_2(1-q^{-1}s_1-q^{-1}s_2+q^{-2}s_1s_2+q^{-2}s_2s_1-q^{-3}s_1s_2s_1)=0\ .
  \end{equation}
\end{defn}

Here again, the same relation with indices $1,2$ replaced by $i,i+1$ is implied. Note that from the properties of the elements $E_i$ recalled in \cref{sec_Eij}, the product $E_1E_2$ commutes with $s_1$ and $s_2$.

\begin{thm}\label{thm_FTL}
  The algebra $\FTL_{d,n}$ is isomorphic to the centralizer $\End_{U_q(\mathfrak{gl}_2^d)}(V^{\otimes n})$ where $V$ is the vector representation of dimension $2d$.
\end{thm}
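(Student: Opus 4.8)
The plan is to mirror the proof of \cref{thm_YH}: first factor the surjection $\Phi$ through $\FTL_{d,n}$, then identify the target centralizer as a sum of matrix algebras over tensor products of Temperley--Lieb algebras, and finally match dimensions. We keep $A_b=U_q(\mathfrak{gl}_2)$ and $V_b$ the $2$-dimensional vector representation.

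First I would check that the morphism $\Phi\colon\YH_{d,n}\to\End_A(V^{\otimes n})$ of \cref{thm_YH} kills the defining relation \eqref{rel-FTL}, so that it factors through $\FTL_{d,n}$. Writing $\Lambda_3$ for the $q$-antisymmetrizer \eqref{qAnti_3}, \cref{action_E} gives $\Phi(E_1E_2)=\varepsilon_1\varepsilon_2$, the projector onto the summands $V_{a_1}\otimes\cdots\otimes V_{a_n}$ with $a_1=a_2=a_3$. On such a summand $\sigma_1,\sigma_2$ restrict to $\check{R}^{(b)}_1,\check{R}^{(b)}_2$ acting on the first three factors of $V_b^{\otimes 3}$ (with $b=a_1$), so $\Phi(E_1E_2\Lambda_3)$ restricts to the $U_q(\mathfrak{gl}_2)$-antisymmetrizer on three points, which vanishes on $V_b^{\otimes 3}$ because $\dim V_b=2$; equivalently, by \cref{thm_SW_Hecke} with $N=2$ the kernel of $\phi$ on $V_b^{\otimes 3}$ is generated by $\Lambda_3$. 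Hence $\Phi$ descends to a surjective morphism $\overline{\Phi}\colon\FTL_{d,n}\to\End_A(V^{\otimes n})$.

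Next I would identify the target. For $N_b=2$ the centralizer $\End_{A_b}(V_b^{\otimes m})$ is the Temperley--Lieb algebra $\TL_m$ for every $m$, as recalled in \cref{sec:SW}, and the hypothesis $\Hom_{A_b}(V_b^{\otimes r},V_b^{\otimes s})=0$ for $r\neq s$ holds for the vector representation of $U_q(\mathfrak{gl}_2)$. Thus \cref{lem:iso-matrix-alg} yields
\[
  \End_A(V^{\otimes n})\simeq\bigoplus_{\nu\vDash_d n}\Mat_{\binom{n}{\nu}}\bigl(\TL_{\nu_1}\otimes\cdots\otimes\TL_{\nu_d}\bigr),
\]
of dimension $\sum_{\nu\vDash_d n}\binom{n}{\nu}^2\prod_{b=1}^d\dim\TL_{\nu_b}$.

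Finally I would bound $\dim\FTL_{d,n}$ from above. By \cref{rem:iso_matrix} we have $\YH_{d,n}\simeq\bigoplus_\nu\Mat_{\binom{n}{\nu}}(\Hec_{\nu_1}\otimes\cdots\otimes\Hec_{\nu_d})$, realized through the idempotents $E_{\overline{a}}$ (for $\overline{a}\in\{1,\dots,d\}^n$) of the commutative subalgebra generated by $t_1,\dots,t_n$, the diagonal corner $E_{\overline{a}}\YH_{d,n}E_{\overline{a}}$ for $\overline{a}$ of type $\nu$ being $\Hec_{\nu_1}\otimes\cdots\otimes\Hec_{\nu_d}$ (the Hecke relation in each factor coming from \eqref{YHecke_relation}, since $E_i$ acts as the identity on equal-colour strands). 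As $\FTL_{d,n}$ is a quotient of $\YH_{d,n}$ by a two-sided ideal, it decomposes as $\bigoplus_\nu\Mat_{\binom{n}{\nu}}(Q_\nu)$ with $Q_\nu$ a quotient of $\Hec_{\nu_1}\otimes\cdots\otimes\Hec_{\nu_d}$. The crux is that, for $\overline{a}$ with $a_1=a_2=a_3=b$, the element $E_{\overline{a}}E_1E_2\Lambda_3E_{\overline{a}}=E_{\overline{a}}\Lambda_3E_{\overline{a}}$ equals $\Lambda_3$ sitting in the factor $\Hec_{\nu_b}$; conjugating \eqref{rel-FTL} by the invertible elements $s_w$ produces the antisymmetrizers on all triples of equal-colour strands, so $Q_\nu$ is a quotient of $\TL_{\nu_1}\otimes\cdots\otimes\TL_{\nu_d}$ and $\dim Q_\nu\leq\prod_b\dim\TL_{\nu_b}$. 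Summing gives $\dim\FTL_{d,n}\leq\dim\End_A(V^{\otimes n})$, and together with the surjectivity of $\overline{\Phi}$ this forces it to be an isomorphism. I expect the main obstacle to be precisely this last step: tracking how \eqref{rel-FTL} and its $\mathfrak{S}_n$-conjugates land in the Hecke factors under the idempotent description of $\YH_{d,n}\simeq\bigoplus_\nu\Mat(\Hec_{\nu_1}\otimes\cdots\otimes\Hec_{\nu_d})$, so as to recognize the quotient in each factor as the Temperley--Lieb relation.
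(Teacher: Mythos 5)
Your proposal follows the paper's proof step for step in its first two stages: the verification that $\Phi$ kills \eqref{rel-FTL} (via $\Phi(E_1E_2)$ being the projector onto the summands with $a_1=a_2=a_3$, together with the vanishing of $\Lambda_3$ on $V_b^{\otimes 3}$ when $\dim V_b=2$) and the identification $\Phi(\FTL_{d,n})\simeq\bigoplus_{\nu}\Mat_{\binom{n}{\nu}}\bigl(\TL_{\nu_1}\otimes\cdots\otimes\TL_{\nu_d}\bigr)$ are exactly the paper's arguments. Where you genuinely diverge is the final dimension bound: the paper simply cites \cite{CP} for $\dim\FTL_{d,n}\leq\dim\End_A(V^{\otimes n})$, whereas you sketch a self-contained proof by pushing the defining relation through the abstract isomorphism $\YH_{d,n}\simeq\bigoplus_{\nu}\Mat_{\binom{n}{\nu}}\bigl(\Hec_{\nu_1}\otimes\cdots\otimes\Hec_{\nu_d}\bigr)$ of \cref{rem:iso_matrix}. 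Your argument is sound: the two-sided ideal generated by $E_1E_2\Lambda_3$ is diagonal in the matrix picture (every monomial of $\Lambda_3$ stabilizes any $\overline{a}$ with $a_1=a_2=a_3$), its nonzero entries are $q$-antisymmetrizers on consecutive equal-colour triples, and a single such entry per factor $\Hec_{\nu_b}$ with $\nu_b\geq 3$ already generates the Temperley--Lieb ideal there. In particular you do not actually need the claim that conjugating by the $s_w$ produces antisymmetrizers on \emph{all} equal-colour triples --- a claim that is delicate in a Hecke-type algebra, where conjugation does not behave as in a group algebra --- since $\TL_m$ is by definition the quotient of $\Hec_m$ by the antisymmetrizer on one consecutive triple. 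What your route buys is independence from \cite{CP}; what it costs is reliance on the explicit form of the isomorphism of \cref{rem:iso_matrix}, namely that the diagonal corners $E_{\overline{a}}\YH_{d,n}E_{\overline{a}}$ for $\overline{a}$ of type $\nu$ are exactly $\Hec_{\nu_1}\otimes\cdots\otimes\Hec_{\nu_d}$, which the paper itself only quotes from \cite{JPA} rather than proving.
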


\begin{proof}
  We already have the surjective morphism
  \[
    \Phi\ \ :\ \ \YH_{d,n}\ \to\ \End_{U_q(\mathfrak{gl}_2^d)}(V^{\otimes n})
  \]
  from \cref{thm_YH}. First we need to check that the defining relation \eqref{rel-FTL} of $\FTL_{d,n}$ is satisfied in the image by $\Phi$. From the description of the image by $\Phi$ of the idempotents $E_i$ in \cref{action_E}, we have that the product $E_1E_2$ acts as follows on $V\otimes V\otimes V$: it acts as the identity on subspaces of the form $V_a\otimes V_a\otimes V_a$, where $a=1,\dots,d$, and acts as $0$ on all other subspaces $V_a\otimes V_b\otimes V_c$. 

  On subspaces $V_a\otimes V_a\otimes V_a$ the relation \eqref{rel-FTL} is satisfied since the $q$-antisymmetrizer $\Lambda_3$ acts as $0$ from the usual Schur--Weyl duality, and on other subspaces it is trivially satisfied since $E_1E_2$ acts as $0$.

  Therefore, the morphism $\Phi$ factors through the algebra $\FTL_{d,n}$. The second item of \cref{thm:action-framed} reads in this case:
  \begin{equation}\label{iso-FTL}
    \Phi(\FTL_{d,n}) \simeq \bigoplus_{\nu \vDash_d n}\Mat_{\binom{n}{\nu}}\bigl(\TL_{\nu_1}\otimes \cdots \otimes \TL_{\nu_d}\bigr)\,.
  \end{equation}
  To conclude that $\Phi$ is an isomorphism, one can show that the dimension of $\FTL_{d,n}$ is less or equal than the dimension of the right hand side. This can be found in \cite{CP}.
\end{proof}

In \cite{CP} the algebra $\FTL_{d,n}$ is directly shown to be isomorphic to the direct sum in \eqref{iso-FTL} and the representation theory is described. This is an example of the general setting in \cref{sec:consequences}.

\subsubsection{The Complex Temperley--Lieb algebra.} 

Variants of the algebra $\FTL_{d,n}$ were defined, see \cite{CP,Gou}. In particular, the so-called complex Temperley--Lieb algebra $\CTL_{d,n}$ is defined as the quotient of the Yokonuma--Hecke algebra $\YH_{d,n}$ by the relation:
\begin{equation}\label{rel-CTL}
  \frac{1}{d^3}\sum_{a,b,c=1}^dt_1^at_2^bt_3^c(1-q^{-1}s_1-q^{-1}s_2+q^{-2}s_1s_2+q^{-2}s_2s_1-q^{-3}s_1s_2s_1)=0\ .
\end{equation}
From the point of view of the action of the algebra $\YH_{d,n}$ on $V^{\otimes n}$ from the preceding sections, the meaning of the prefactor 
\[
  \sum_{a,b,c=1}^dt_1^at_2^bt_3^c=(\frac{1}{d}\sum_{a=1}^dt_1^a)E_1E_2
\]
is easy to explain. It acts on $V\otimes V\otimes V$ as follows: it is proportional to the identity on the subspace $V_1\otimes V_1\otimes V_1$, and it acts as $0$ on all other subspaces $V_a\otimes V_b\otimes V_c$. In particular, on all these latter subspaces, the relation \eqref{rel-CTL} is automatically satisfied. On the subspace $V_1\otimes V_1\otimes V_1$ it is satisfied only if the dimension of $V_1$ is $2$. 

From the preceding discussion, using the same reasoning as in the preceding subsection, we get the following interpretation of the algebra $\CTL_{d,n}$ as a centralizer. We need to take $A=U_q(\mathfrak{gl}_2\oplus \mathfrak{gl}_{N_2}\oplus \dots\oplus \mathfrak{gl}_{N_d})$ (and $V$ is of dimension $2+N_2+\dots+N_d$). 

\begin{thm}\label{thm_CTL}
  There exists a surjective homomorphism 
  \[
    \Phi\ \ :\ \ \CTL_{d,n}\to\End_{A}(V^{\otimes n})\ .
  \]
  It is an isomorphism if and only if $n \leq N_b$ for all $2 \leq b \leq d$.
\end{thm}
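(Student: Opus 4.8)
The plan is to reduce everything to the structural isomorphism $\YH_{d,n}\cong\bigoplus_{\nu\vDash_d n}\Mat_{\binom{n}{\nu}}(\Hec_{\nu_1}\otimes\cdots\otimes\Hec_{\nu_d})$ recalled in \cref{rem:iso_matrix}, together with the surjection $\Phi\colon\YH_{d,n}\to\End_A(V^{\otimes n})$ of \cref{thm_YH}. The discussion preceding the theorem already shows that the prefactor $(\tfrac1d\sum_a t_1^a)E_1E_2$ maps under $\Phi$ to the projector onto $V_1\otimes V_1\otimes V_1$ in the first three factors, on which $\Lambda_3$ acts as the $U_q(\mathfrak{gl}_2)$ $q$-antisymmetrizer and hence vanishes because $N_1=2$; thus \eqref{rel-CTL} holds in the image and $\Phi$ descends to $\CTL_{d,n}$. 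Since the quotient map $\YH_{d,n}\to\CTL_{d,n}$ is surjective and $\Phi$ factors through it, we obtain the desired surjective homomorphism $\Phi\colon\CTL_{d,n}\to\End_A(V^{\otimes n})$. It then remains to decide injectivity, that is, to determine when the kernel of $\Phi\colon\YH_{d,n}\to\End_A(V^{\otimes n})$ equals the two-sided ideal $I\subseteq\YH_{d,n}$ generated by the left-hand side of \eqref{rel-CTL}.

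First I would identify $\ker\Phi$ in the matrix grading. Both $\YH_{d,n}$ and $\End_A(V^{\otimes n})\cong\bigoplus_\nu\Mat_{\binom n\nu}(\End_{A_1}(V_1^{\otimes\nu_1})\otimes\cdots\otimes\End_{A_d}(V_d^{\otimes\nu_d}))$ are graded by $d$-compositions $\nu$, and $\Phi$ respects this grading, acting in block $\nu$ as $\Id$ on the matrix part tensored with the quantum Schur--Weyl surjections $\phi_b\colon\Hec_{\nu_b}\to\End_{A_b}(V_b^{\otimes\nu_b})$. Writing $K^{(b)}_k=\ker(\Hec_k\to\End_{A_b}(V_b^{\otimes k}))$, the standard formula for the kernel of a tensor product of surjections of vector spaces gives $\ker\Phi|_\nu=\Mat_{\binom n\nu}\bigl(\sum_{b=1}^d\Hec_{\nu_1}\otimes\cdots\otimes K^{(b)}_{\nu_b}\otimes\cdots\otimes\Hec_{\nu_d}\bigr)$. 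Here $K^{(1)}_{\nu_1}$ (with $N_1=2$) is $\ker(\Hec_{\nu_1}\to\TL_{\nu_1})$, nonzero exactly for $\nu_1\geq3$, whereas for $b\geq2$ the kernel $K^{(b)}_{\nu_b}$ is nonzero exactly when $\nu_b>N_b$.

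Next I would compute $I$ in the same grading. Using \cref{action_E} and the relations of the $E_{i,j}$, the generator of $I$ is the sum over color sequences $\overline a$ with $a_1=a_2=a_3=1$ of the diagonal idempotents $E_{\overline a}$ times $\Lambda_3$; since $s_1,s_2$ fix such a sequence, cutting down by one $E_{\overline a}$ isolates $E_{\overline a}\otimes\Lambda_3^{(1)}$, where $\Lambda_3^{(1)}$ is the $q$-antisymmetrizer on the first three color-$1$ strands inside $\Hec_{\nu_1}$. As this is a matrix unit tensored with $\Lambda_3^{(1)}$, the two-sided ideal it generates in block $\nu$ is $\Mat_{\binom n\nu}\bigl(\langle\Lambda_3^{(1)}\rangle\otimes\Hec_{\nu_2}\otimes\cdots\otimes\Hec_{\nu_d}\bigr)$, and since $\Lambda_3$ generates $\ker(\Hec_{\nu_1}\to\TL_{\nu_1})=K^{(1)}_{\nu_1}$, this is precisely the $b=1$ summand of $\ker\Phi|_\nu$. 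In particular $I\subseteq\ker\Phi$, reconfirming that $\Phi$ factors through $\CTL_{d,n}$.

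Comparing the two ideals block by block, $\Phi$ is an isomorphism iff for every $\nu$ the summands with $b\geq2$ are contained in $K^{(1)}_{\nu_1}\otimes\Hec_{\nu_2}\otimes\cdots\otimes\Hec_{\nu_d}$. Over a field, an element $1\otimes\cdots\otimes x_b\otimes\cdots$ with $0\neq x_b\in K^{(b)}_{\nu_b}$ lies in that ideal only if $1\in K^{(1)}_{\nu_1}$, which never holds since $\Hec_{\nu_1}\to\TL_{\nu_1}$ is a nonzero map; hence the containment forces $K^{(b)}_{\nu_b}=0$ for all $b\geq2$. As $\nu_b$ takes every value up to $n$, this is equivalent to $n\leq N_b$ for all $2\leq b\leq d$, the stated condition. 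The main obstacle is exactly this kernel comparison: one must verify that the complex Temperley--Lieb relation imposes only the $\mathfrak{gl}_2$-antisymmetrizer on the color-$1$ block and therefore reproduces precisely the $b=1$ part of $\ker\Phi$, while the contributions from the blocks $b\geq2$ (governed by the $q$-antisymmetrizer on $N_b+1$ points) survive in $\ker\Phi$ as soon as some $\nu_b$ exceeds $N_b$.
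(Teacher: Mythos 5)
Your proof is correct. The first half (the prefactor $(\tfrac1d\sum_a t_1^a)E_1E_2$ acts as the projector onto $V_1\otimes V_1\otimes V_1$ in the first three factors, where the $q$-antisymmetrizer vanishes because $N_1=2$, so \eqref{rel-CTL} holds in the image and $\Phi$ descends) is exactly the paper's argument. For the injectivity criterion you take a genuinely different route: the paper argues as in \cref{thm_FTL}, identifying the image with $\bigoplus_{\nu}\Mat_{\binom{n}{\nu}}\bigl(\TL_{\nu_1}\otimes \Hec_{\nu_2}\otimes\cdots\otimes\Hec_{\nu_d}\bigr)$ when $n\le N_b$ for $b\ge 2$ and then concluding by a dimension count, quoting \cite{CP} for the upper bound on $\dim\CTL_{d,n}$ (and the ``only if'' direction comes from the fact that some $\End_{A_b}(V_b^{\otimes \nu_b})$ becomes a proper quotient of $\Hec_{\nu_b}$, so the dimension of the image strictly drops). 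You instead work inside the abstract matrix model of $\YH_{d,n}$ from \cref{rem:iso_matrix} and compare the two ideals $\ker\Phi$ and $I$ block by block; your separating-functional argument showing that $1\otimes\cdots\otimes x_b\otimes\cdots\in K^{(1)}_{\nu_1}\otimes\Hec_{\nu_2}\otimes\cdots\otimes\Hec_{\nu_d}$ forces $x_b=0$ is sound, and combined with \cref{thm_SW_Hecke} it yields exactly the condition $n\le N_b$ for $b\ge 2$. What this buys is a self-contained proof that does not import the dimension formula for $\CTL_{d,n}$ from \cite{CP}, and that re-derives, rather than assumes, the isomorphism $\CTL_{d,n}\cong\bigoplus_{\nu}\Mat_{\binom{n}{\nu}}\bigl(\TL_{\nu_1}\otimes \Hec_{\nu_2}\otimes\cdots\otimes\Hec_{\nu_d}\bigr)$. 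The only point you pass over quickly is that, under the isomorphism of \cref{rem:iso_matrix}, the map $\Phi$ really is block-diagonal of the form $\Id\otimes(\phi_1\otimes\cdots\otimes\phi_d)$ and that $E_{\overline a}\Lambda_3$ corresponds to a diagonal matrix unit tensored with the antisymmetrizer on the first three colour-$1$ strands; both facts follow from the explicit descriptions in the proofs of \cref{thm:action-framed} and \cref{thm_YH} (and \cref{action_E}), but each deserves a sentence of justification.
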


Again, as in the preceding subsection, we recover the isomorphism
\[
  \CTL_{d,n} \simeq \bigoplus_{\nu \vDash_d n}\Mat_{\binom{n}{\nu}}\bigl(\TL_{\nu_1}\otimes \Hec_{\nu_2}\otimes \cdots \otimes \Hec_{\nu_d}\bigr)\,,
\]
which was proved directly in \cite{CP} along with the representation theory of $\CTL_{d,n}$, which is a particular case of \cref{sec:consequences}.

\begin{rem}
  It is straightforward to generalize the above picture, by taking various idempotents and various $q$-antisymmetrizers in relations similar to \eqref{rel-FTL} or \eqref{rel-CTL}, in order to relate to centralizers $\End_{A}(V^{\otimes n})$ for various values of $N_1,\dots,N_d$. For example, one can take $s\in\{1,\dots,d\}$ and replace the prefactor in \eqref{rel-CTL} by $\sum_{a,b,c=1}^d\zeta^{-a(s-1)}t_1^at_2^bt_3^c$ to relate to the centralizers when $N_s$ is of dimension 2 and other $N_b$'s arbitrary.
\end{rem}

\begin{rem}\label{rem-TL}
  We conclude from this subsection, as was also advocated in \cite{CP}, that the most natural framized versions of the Temperley--Lieb algebra are, first, the algebra $\FTL_{d,n}$ and, second, the algebra $\CTL_{d,n}$. The other variant called Yokonuma--Temperley--Lieb algebra, see \cite{Gou}, does not seem to be naturally related to any centralizer.
\end{rem}

\subsection{Framization of the Birman--Murakami--Wenzl algebra}
\label{sec:framed-BMW}

In this section, we work over the field $\Bbbk=\mathbb{C}(q,a)$ with two indeterminates. The Birman--Murakami--Wenzl algebra, BMW algebra for short, is defined as the quotient of the braid group algebra $\Bbbk \mathcal{B}_n$ by the additional relations
\begin{align}
  e_i s_i & =  a^{-1} e_i & & \mbox{for} \ i=1,\dots, n-1\,, \label{BMW1}\\
  e_i s_{j}^{\pm1} e_i & = a^{\pm1} e_{i} & & \mbox{for} \ |i-j|=1\,,   \label{BMW2}
\end{align}
where
\begin{equation}
  \label{defei}
  e_i = 1- \frac{s_i-s_i^{-1}}{q - q^{-1}} \ .
\end{equation}
We will denote this algebra by $\BMW_n(q,a)$ or $\BMW_n$ if the parameters are clear from the context. As a consequence of the defining relations, the generators $s_i$'s satisfy a cubic relation:
\begin{equation}
  (s_i-a^{-1})(s_i^2-(q-q^{-1})s_i-1)=0. \label{BMW6}
\end{equation}
The quotient by the relations $e_i=0$ gives back the Hecke algebra. Other implied relations in $\BMW_n(q,a)$ are
\begin{align}
%  (s_i-a^{-1})(s_i-(q-q^{-1})s_i-1)=0\,, \label{BMW6} \\
  e_i^2 & = \left( \frac{a - a^{-1}}{q - q^{-1}} +1 \right) e_{i}\,, \label{BMW5}\\
%  e_{i+1} s_{i}^{\pm1} e_{i+1} & = a^{\pm1} e_{i+1},  \label{BMW2'}\\
  e_i e_{j} e_i & = e_{i} & \mbox{for} \ |i-j|=1\,. \label{BMW3}
\end{align}
The algebra $\BMW_n$ can be seen as a deformation of the Brauer algebra and its dimension is equal to $ \frac{(2n)!}{2^n n!} = (2n-1) \cdot (2n-3) \cdots 5 \cdot 3 \cdot 1$.

An instance of Schur--Weyl duality \cite[Sections 5 and 6]{Res} shows that for specific specializations of $a$, this algebra is related to a centralizer algebra for $U_q(\mathfrak{sp}_{2N})$ or $U_q(\mathfrak{so}_{2N})$. Let $V$ be the vector representation of $U_q(\mathfrak{sp}_{2N})$ or $U_q(\mathfrak{so}_{2N})$.

\begin{thm}~
  \label{thm:SW_BMW}
  \begin{enumerate}[label=(\roman*)]
  \item Specialize $a$ to $q^{N-1}$. There is a surjective morphism from the BMW algebra to the centralizer
    \begin{equation}
      \label{eq:SW_BMW_so}
      \phi\ \ :\ \ \BMW_{n}(q,q^{N-1})\to\End_{U_q(\mathfrak{so}_{2N})}(V^{\otimes n})\ .
    \end{equation}
    It is an isomorphism if and only if $n\leq N$.
  \item Specialize $a$ to $-q^{N+1}$. There is a surjective morphism from the BMW algebra to the centralizer
    \begin{equation}
      \label{eq:SW_BMW_sp}
      \phi\ \ :\ \ \BMW_{n}(q,-q^{N+1})\to\End_{U_q(\mathfrak{sp}_{2N})}(V^{\otimes n})\ .
    \end{equation}
    It is an isomorphism if and only if $n\leq N$.  
  \end{enumerate}
\end{thm}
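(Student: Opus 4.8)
The plan is to produce the morphism $\phi$ by letting the BMW generators act on $V^{\otimes n}$ through the $R$-matrix of the quantum group, and then to settle the isomorphism question by a dimension count. Since $U_q(\mathfrak{so}_{2N})$ and $U_q(\mathfrak{sp}_{2N})$ are quasi-triangular, the universal $R$-matrix already endows $V^{\otimes n}$ with a braid group action $s_i\mapsto \check R_i$ commuting with the quantum group, exactly as in the hypothesis \eqref{braid_in_Ab}. The crucial input is the spectral decomposition of $\check R$ on $V\otimes V$: for generic $q$ the tensor square of the $2N$-dimensional vector representation splits into three irreducible summands, the $q$-deformed symmetric and antisymmetric parts together with the one-dimensional trivial summand spanned by the invariant bilinear form (here we use that $V$ is self-dual, which is the essential feature of the orthogonal and symplectic types). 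With the normalization fixed by \eqref{Hecke_relation}, $\check R$ acts on these three summands by the scalars $q$, $-q^{-1}$ and $a^{-1}$, where $a=q^{N-1}$ in the orthogonal case and $a=-q^{N+1}$ in the symplectic case. This is precisely the cubic relation \eqref{BMW6}, so that $e_i$ defined in \eqref{defei} becomes, on the factors $i,i+1$, a scalar multiple of the projector onto the trivial summand.

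With $e_i$ identified as this projector, I would check the remaining BMW relations. Relation \eqref{BMW1}, $e_is_i=a^{-1}e_i$, is immediate since $\check R$ acts by $a^{-1}$ on the summand which is the image of $e_i$, and \eqref{BMW5} merely records the idempotent normalization. For relation \eqref{BMW2} with $|i-j|=1$ one uses the description of the trivial summand through the coevaluation and evaluation maps of the invariant form: the composite $e_is_{j}^{\pm1}e_i$ is a ``curl'' diagram, and its value $a^{\pm1}e_i$ is governed by the ribbon twist on $V$, whose eigenvalue is exactly $a^{\pm1}$ for the chosen specialization; relation \eqref{BMW3} is the corresponding planar isotopy. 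These are the defining relations of the Kauffman tangle category, and they hold because $\check R$ together with the (co)evaluation of the invariant form assembles into a ribbon structure. This produces the algebra morphism $\phi\colon\BMW_n(q,a)\to\End_{U_q}(V^{\otimes n})$ landing in the centralizer.

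Surjectivity, together with the semisimplicity and the Bratteli diagram of the centralizer, is the content of Reshetikhin's Schur--Weyl duality \cite[Sections 5 and 6]{Res}, which I would invoke. For the final equivalence I would compare dimensions. The algebra $\BMW_n(q,a)$ has the fixed dimension $(2n-1)\cdot(2n-3)\cdots 3\cdot 1=\frac{(2n)!}{2^n n!}$, independent of $N$ and equal to that of the Brauer algebra. The centralizer $\End_{U_q}(V^{\otimes n})$ has dimension equal to the number of pairs of paths of length $n$ in the Bratteli diagram of $V^{\otimes\bullet}$; this number equals $\frac{(2n)!}{2^n n!}$ precisely when $n\leq N$, since then no dominant weight occurring in $V^{\otimes n}$ is truncated, and it is strictly smaller once $n>N$. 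Hence $\phi$ is surjective in all cases and is an isomorphism if and only if $n\leq N$.

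The main obstacle is the verification of \eqref{BMW2}: the cubic relation and \eqref{BMW1} are formal consequences of the eigenvalues of $\check R$, but \eqref{BMW2} genuinely requires controlling the interaction of $\check R$ with the (co)evaluation maps of the invariant form, equivalently the ribbon twist on $V$, which is exactly the point at which the specific values $a=q^{N-1}$ and $a=-q^{N+1}$ are forced. In practice one sidesteps the direct diagrammatic computation by citing \cite{Res} for both the morphism and its surjectivity, reserving the independent dimension count above for the isomorphism criterion.
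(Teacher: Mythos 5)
The paper does not actually prove \cref{thm:SW_BMW}: it is stated in a ``recollection'' section and imported wholesale from \cite[Sections 5 and 6]{Res}, so there is no internal argument to compare yours against, and your ultimate reliance on \cite{Res} for the existence of $\phi$ and its surjectivity is exactly what the authors do. The supplementary sketch you give is the standard route and is correct in outline: the three-fold spectral decomposition of $\check R$ on $V\otimes V$ yields the cubic relation \eqref{BMW6} and identifies $e_i$ as (a multiple of) the projector onto the invariant line, \eqref{BMW1} and \eqref{BMW5} are then formal, and \eqref{BMW2} is the curl/ribbon-twist computation that pins down the specialization of $a$. One caveat on the part you add beyond the citation, namely the dimension count for the ``if and only if'': in type $D$ the failure for $n>N$ is not only that dominant weights get truncated. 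Already at $n=N$ the summand of $V^{\otimes N}$ labelled by the column $(1^N)$, the $q$-analogue of $\Lambda^N V$, splits into two non-isomorphic $U_q(\mathfrak{so}_{2N})$-irreducibles (the self-dual and anti-self-dual parts), so the Bratteli diagram of $V^{\otimes\bullet}$ differs from the BMW one by node splitting as well as by truncation; this makes the centralizer \emph{larger} than $\frac{(2n)!}{2^n n!}$ rather than smaller, and it is precisely the point where surjectivity (not just injectivity) becomes delicate and where one genuinely needs the statement as proved in \cite{Res} rather than a pure path count. In the symplectic case your argument goes through as written. So: same approach as the paper at bottom, with a useful but not fully airtight expansion of the boundary case in type $D$.
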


\begin{rem}
  A similar statement also exists for $U_q(\mathfrak{so}_{2N+1})$ but one needs to add a square root of $q$.
\end{rem}

In \cite{JuLa}, a definition of the framization of the BMW algebra is proposed and seems not to be related to the context of the present article. We propose a different definition for the framization of the BMW algebra that we can relate with a centralizer. Recall the idempotents $E_i$ introduced in \cref{sec_Eij}.

\begin{defn}
  \label{def:FBMW}
  The framization of the BMW algebra, denoted $\FBMW_{d,n}(q,a)$, is the quotient of the framed braid group algebra $\Bbbk\mathcal{FB}_{d,n}$ by the additional relations
  \begin{align}
    e_i s_i & =  a^{-1} e_i & & \mbox{for} \ i=1,\dots, n-1\,, \label{FBMW1}\\
    e_i s_{i+1}^{\pm1} e_i E_{i+1} & = a^{\pm1} e_{i}E_{i+1} & & \mbox{for} \ i=1,\dots, n-2\,,   \label{FBMW2}
  \end{align}
  where
  \begin{equation}
    \label{defei-framed}
    e_i = E_i - \frac{s_i-s_i^{-1}}{q - q^{-1}}\ .
  \end{equation}
\end{defn}
Note that the definition of $e_i$ in the algebra $\FBMW_{d,n}(q,a)$ involves the idempotent $E_i$. As a consequence, the cubic relation \eqref{BMW6} is replaced by
\begin{equation}
  (s_i-a^{-1})(s_i^2-(q-q^{-1})s_iE_i-1)=0. \label{FBMW6}
\end{equation}
The quotient by the relations $e_i=0$ now gives back the Yokonuma--Hecke algebra. As for the usual BMW algebra, some additional relations are implied.
\begin{lem}\label{lem:FBMW}
  The following relations are satisfied in $\FBMW_{d,n}(q,a)$:
  \begin{align}
    e_iE_i & = e_i, \label{FBMWeE}\\
    e_i^2 &  = \left( \frac{a - a^{-1}}{q - q^{-1}} +1 \right) e_{i}\,, \label{FBMW5}\\
    e_iE_j & =E_je_i & \mbox{for all}\ i,j\,,\label{FBMWeE2}\\
    % e_{i+1} s_{i}^{\pm1} e_{i+1} E_{i} & = a^{\pm1} e_{i+1}E_i,  \label{FBMW2'}\\
    e_i e_{j} e_i & = e_{i} E_{j} & \mbox{for}\ |i-j|=1\ .  \label{FBMW3}
  \end{align}
\end{lem}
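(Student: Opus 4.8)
The plan is to derive all four identities from the defining relations \eqref{FBMW1} and \eqref{FBMW2}, the idempotent relations for the $E_i$ from \cref{sec_Eij}, and the observation that each summand of $e_i$ (see \eqref{defei-framed}) commutes with $s_i$ and with $E_i$; in particular $e_i$ commutes with $s_i$, so right-multiplying \eqref{FBMW1} by $s_i^{-1}$ gives $e_is_i^{-1}=ae_i$ and hence $e_i(s_i-s_i^{-1})=(a^{-1}-a)e_i$. Substituting $s_i-s_i^{-1}=(q-q^{-1})(E_i-e_i)$, which is just \eqref{defei-framed}, this becomes
\[
  e_iE_i-e_i^2=\tfrac{a^{-1}-a}{q-q^{-1}}\,e_i .
\]
Consequently \eqref{FBMW5} and \eqref{FBMWeE} are equivalent once either is known, and since a direct expansion gives $e_iE_i-e_i=\tfrac{1}{q-q^{-1}}(s_i-s_i^{-1})(1-E_i)$, the whole lemma rests on proving
\[
  (s_i-s_i^{-1})(1-E_i)=0 ,
\]
which is the main obstacle.

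To attack this I would resolve $1-E_i$ through the central element $z=t_it_{i+1}^{-1}$ of the algebra generated by $t_i,t_{i+1}$. As $z^d=1$, the orthogonal idempotents $f_k=\tfrac1d\sum_a\zeta^{-ka}z^a$ project onto the $\zeta^k$-eigenspaces of $z$, with $E_i=f_0$ and $1-E_i=\sum_{k\neq0}f_k$. From $s_iz=z^{-1}s_i$ one gets $s_if_k=f_{-k}s_i$, so $s_i$ intertwines $f_k$ and $f_{-k}$ while $s_i^2$ commutes with every $f_k$; moreover $E_if_k=0$ and hence $e_if_k=-\tfrac{1}{q-q^{-1}}f_{-k}(s_i-s_i^{-1})$ for $k\neq0$. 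Feeding this into \eqref{FBMW1} multiplied on the right by $f_k$ produces the identity $f_k(s_i^2-1)=a^{-1}f_{-k}(s_i-s_i^{-1})$. The left-hand side is fixed by left multiplication by $f_k$ and the right-hand side by $f_{-k}$, so as soon as $2k\not\equiv0\pmod d$ the two orthogonal projections force both sides to vanish, giving $(s_i^2-1)f_k=0$. This settles every $k\neq0$ when $d$ is odd and yields \eqref{FBMWeE}. The delicate case is $d$ even and $k=d/2$, where $z=-1=z^{-1}$ is fixed by $s_i$ and \eqref{FBMW1} only returns the cubic $(s_i-a^{-1})(s_i^2-1)f_{d/2}=0$; excluding the eigenvalue $a^{-1}$ of $s_i$ on this block is exactly the point where \eqref{FBMW1} alone is insufficient and the cross-strand relation \eqref{FBMW2} (or the resulting action on $V^{\otimes n}$, where the braiding between distinct colours is a genuine flip) must be brought in.

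Granting \eqref{FBMWeE}, the remaining relations are formal. Relation \eqref{FBMW5} follows from the displayed identity above. For \eqref{FBMWeE2}, when $|i-j|\geq2$ or $j=i$ the idempotent $E_j$ commutes with $s_i$ and with $E_i$, hence with $e_i$; for $j=i\pm1$ I would combine \eqref{FBMWeE} in the form $(s_i-s_i^{-1})E_i=s_i-s_i^{-1}$ with the conjugation relation $s_iE_{i+1}s_i^{-1}=E_{i,i+2}$ and the idempotent identity $E_iE_{i,i+2}=E_iE_{i+1}$ (both sides project onto $t_i=t_{i+1}=t_{i+2}$), which collapses $[\,s_i-s_i^{-1},E_{i+1}\,]$ to zero.

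Finally, for \eqref{FBMW3} I would use \eqref{FBMWeE} for the index $i+1$ together with \eqref{FBMWeE2} to write $e_ie_{i+1}e_i=e_ie_{i+1}e_iE_{i+1}$, and then expand $e_{i+1}=E_{i+1}-\tfrac{s_{i+1}-s_{i+1}^{-1}}{q-q^{-1}}$. The $E_{i+1}$-part becomes $e_iE_{i+1}e_iE_{i+1}=e_i^2E_{i+1}=\bigl(\tfrac{a-a^{-1}}{q-q^{-1}}+1\bigr)e_iE_{i+1}$ by \eqref{FBMW5} and \eqref{FBMWeE2}, while the flip-part is governed by \eqref{FBMW2}, namely $e_i(s_{i+1}-s_{i+1}^{-1})e_iE_{i+1}=(a-a^{-1})e_iE_{i+1}$. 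Subtracting the two contributions, the coefficients combine as $\bigl(\tfrac{a-a^{-1}}{q-q^{-1}}+1\bigr)-\tfrac{a-a^{-1}}{q-q^{-1}}=1$, which gives $e_ie_{i+1}e_i=e_iE_{i+1}$.
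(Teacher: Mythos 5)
Your reduction of the whole lemma to the single identity $(s_i-s_i^{-1})(1-E_i)=0$ is correct, and granting that identity your derivations of \eqref{FBMW5}, \eqref{FBMWeE2} and \eqref{FBMW3} are sound and essentially coincide with the paper's. The problem is that your proof of the key identity has a genuine gap exactly where you flag it: when $d$ is even, the block $f_{d/2}$ of $z=t_it_{i+1}^{-1}$ is fixed by $s_i$, and \eqref{FBMW1} restricted to that block only yields the cubic $(s_i-a^{-1})(s_i^2-1)f_{d/2}=0$, which does not force $(s_i^2-1)f_{d/2}=0$. Neither of your two proposed escapes closes this. Appealing to the action on $V^{\otimes n}$ is not admissible: the lemma asserts relations in the abstractly presented algebra $\FBMW_{d,n}(q,a)$, and \cref{prop:FBMW} only provides a morphism \emph{out of} it, so a relation verified in the image does not lift. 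Appealing to \eqref{FBMW2} cannot work either, at least not uniformly: for $n=2$ that relation is vacuous, yet \eqref{FBMWeE} is still claimed (and true) for $\FBMW_{d,2}$; and for $n\geq 3$ you give no indication of how a three-strand relation involving $E_{i+1}$ would control the two-strand block $f_{d/2}$. Since $d=2$ is the first nontrivial case of the whole construction, this is not a peripheral omission.

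The gap is an artifact of working only with the ratio $z$ rather than with $t_i$ and $t_{i+1}$ individually, and the paper's proof closes it in two lines valid for all $d$. From the definition one has $e_it_i=t_{i+1}e_i$, and since $e_i$ commutes with $s_i$, relation \eqref{FBMW1} also reads $e_i=as_ie_i$; hence
\[
  t_ie_i \;=\; at_is_ie_i \;=\; as_it_{i+1}e_i \;=\; as_ie_it_i \;=\; e_it_i \;=\; t_{i+1}e_i\,,
\]
so $t_it_{i+1}^{-1}e_i=e_i$ and therefore $E_ie_i=\frac1d\sum_a(t_it_{i+1}^{-1})^ae_i=e_i$. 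In your block language: $t_i$ commutes with every $f_k$, while on the block $f_{d/2}$ one has $t_{i+1}=-t_i$, so $t_ie_if_{d/2}=t_{i+1}e_if_{d/2}=-t_ie_if_{d/2}$ forces $e_if_{d/2}=0$ (as $2$ divides $d$, which is invertible). With this replacement for your treatment of \eqref{FBMWeE}, the rest of your argument goes through; note only that \eqref{FBMW3} for $j=i-1$ requires the variant of \eqref{FBMW2} with $s_{i-1}$, which, like the paper, you do not address explicitly.
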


\begin{proof}
  Using the definition of $e_i$, it is clear that $e_it_i = t_{i+1}e_i$. Therefore
  \[
    t_ie_i = at_is_ie_i = as_it_{i+1}e_i = as_ie_it_i = e_it_i = t_{i+1}e_i
  \]
  and $t_it_{i+1}^{-1}e_i = e_i$. Then Relation \eqref{FBMWeE} follows from the definition of $E_i$ in terms of $t_i,t_{i+1}$.
  
  Relation \eqref{FBMW5} follows immediately from \eqref{FBMWeE}. 
  
  As for Relation \eqref{FBMWeE2}, first note that $s_i$ commutes with $E_j$ if $j\neq i+1$. Moreover, $s_i$ commutes with $E_{i+1}E_i$. Recall also that all $E_i$'s commute. Now we claim that these facts together with \eqref{FBMWeE} imply Relation \eqref{FBMWeE2}. First, for $j\neq i+1$, this is immediate. Second, for $j=i+1$, we have:
  \[
    E_{i+1}e_i=E_{i+1}E_ie_i=e_iE_{i+1}E_i=e_iE_iE_{i+1}=e_iE_{i+1}\ .
  \]
  Finally, Relation \eqref{FBMW3} immediately follows replacing $e_j$ by its definition and using the previous relations.
\end{proof}

The main purpose of \cref{def:FBMW} is that the framization of the BMW algebra relates to some centralizers of $U_q(\mathfrak{so}_{2N}\oplus\dots\oplus \mathfrak{so}_{2N})$ and $U_q(\mathfrak{sp}_{2N}\oplus\dots\oplus \mathfrak{sp}_{2N})$. Note that, unlike the $\mathfrak{gl}_N$ situation with the Yokonuma--Hecke algebra, here we have a single integer $N$ involved. This is because this dimension fixes the value of the parameter $a$ and therefore can not vary.

\begin{prop}~\label{prop:FBMW}
  \begin{enumerate}[label=(\roman*)]
  \item For each $b=1,\ldots,d$, we choose the algebra $A_b = U_q(\mathfrak{so}_{2N})$ and $V_{b}$ its vector representation of dimension $2N$. There exists a homomorphism
    \[
      \Phi\ \ :\ \ \FBMW_{d,n}(q,q^{N-1})\to\End_{A}(V^{\otimes n})\ .
    \]
  \item For each $b=1,\ldots,d$, we choose the algebra $A_b = U_q(\mathfrak{sp}_{2N})$ and $V_{b}$ its vector representation of dimension $2N$. There exists a homomorphism
    \[
      \Phi\ \ :\ \ \FBMW_{d,n}(q,-q^{N+1})\to\End_{A}(V^{\otimes n})\ .
    \]
  \end{enumerate}
\end{prop}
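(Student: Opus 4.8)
The plan is to show that the morphism $\Phi\colon\Bbbk\mathcal{FB}_{d,n}\to\End_A(V^{\otimes n})$ already provided by \cref{thm:action-framed} factors through the quotient defining $\FBMW_{d,n}(q,a)$, for the relevant specialization of the parameter ($a=q^{N-1}$ in case (i), $a=-q^{N+1}$ in case (ii)). Since $\FBMW_{d,n}(q,a)$ is by definition $\Bbbk\mathcal{FB}_{d,n}$ modulo relations \eqref{FBMW1} and \eqref{FBMW2}, it suffices to verify that the images of these two relations hold in $\End_A(V^{\otimes n})$.

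The key preliminary observation concerns the image of the element $e_i$ from \eqref{defei-framed}. By \cref{action_E} we have $\Phi(e_i)=\varepsilon_i-\frac{\sigma_i-\sigma_i^{-1}}{q-q^{-1}}$, and I would analyze this summand by summand in the decomposition $V^{\otimes n}=\bigoplus V_{a_1}\otimes\cdots\otimes V_{a_n}$. On a summand with $V_b,V_c$ in positions $i,i+1$ and $b\neq c$, the projector $\varepsilon_i$ vanishes while $\sigma_i$ acts as the flip, which is an involution, so $\sigma_i-\sigma_i^{-1}=0$ and hence $\Phi(e_i)=0$; on a summand with $b=c$, one has $\varepsilon_i=\Id$ and $\sigma_i=\check{R}^{(b)}_i$, so that $\Phi(e_i)$ acts as $\Id-\frac{\check{R}^{(b)}_i-(\check{R}^{(b)}_i)^{-1}}{q-q^{-1}}=\phi(e_i)$, the image under the Schur--Weyl map of \cref{thm:SW_BMW} of the usual BMW generator. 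Thus $\Phi(e_i)$ is supported on the ``pure'' summands carrying the same block in positions $i,i+1$, and acts there exactly as $\phi(e_i)$.

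With this description both relations reduce to known facts. For \eqref{FBMW1}, on a pure summand the identity becomes $\phi(e_i)\check{R}^{(b)}_i=a^{-1}\phi(e_i)$, which is \eqref{BMW1} and holds because $\phi$ factors through $\BMW_n$; on a mixed summand both sides vanish. For \eqref{FBMW2} the extra factor $E_{i+1}$ is essential, and this is the step I expect to be the main obstacle. Reading operators on a summand from the right, $\Phi(E_{i+1})=\varepsilon_{i+1}$ first forces $a_{i+1}=a_{i+2}$ and the next $\Phi(e_i)$ forces $a_i=a_{i+1}$, so the whole computation is confined to pure summands $(b,b,b)$ in positions $i,i+1,i+2$, on which $\sigma_{i+1}$ specializes to $\check{R}^{(b)}_{i+1}$ and the relation becomes $\phi(e_i)(\check{R}^{(b)}_{i+1})^{\pm1}\phi(e_i)=a^{\pm1}\phi(e_i)$, namely \eqref{BMW2}. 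One sees why $E_{i+1}$ cannot be dropped: on a summand of type $(b,b,c)$ with $c\neq b$, the operator $\sigma_{i+1}$ would swap the last two factors into the configuration $(b,c,b)$, on which the left-hand $\Phi(e_i)$ vanishes, giving $0$, whereas $a^{\pm1}\Phi(e_i)$ is nonzero there; the idempotent $E_{i+1}$ precisely removes such summands. Finally, since all $A_b$ equal one and the same $U_q(\mathfrak{so}_{2N})$ (resp.\ $U_q(\mathfrak{sp}_{2N})$), the operators $\check{R}^{(b)}$ share the same eigenvalues and hence the same value of $a$ across all blocks, so the single specialization in the statement is consistent. This shows $\Phi$ descends to the desired homomorphism and settles both (i) and (ii); the only genuine input beyond this block-diagonal bookkeeping is the Schur--Weyl duality of \cref{thm:SW_BMW}.
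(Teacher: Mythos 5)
Your proposal is correct and follows essentially the same route as the paper: reduce to the decomposition of $V^{\otimes n}$ into summands $V_{a_1}\otimes\cdots\otimes V_{a_n}$, observe that $\Phi(e_i)$ vanishes on mixed summands and coincides with the usual BMW element on pure ones, and invoke \cref{thm:SW_BMW} there. Your treatment is in fact slightly more explicit than the paper's (in particular the computation $\sigma_i-\sigma_i^{-1}=0$ on mixed summands and the remark on why the factor $E_{i+1}$ in \eqref{FBMW2} is indispensable), but the argument is the same.
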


\begin{proof}
  We only prove the case of $A=U_q(\mathfrak{so}_{2N})^{\otimes d}$, the case of $\mathfrak{sp}_{2N}$ is similar. We already have the algebra morphism
  \[
    \Phi\ \ :\ \ \Bbbk\mathcal{FB}_{d,n}\to\End_{A}(V^{\otimes n})
  \]
  following from \cref{thm:action-framed-aff}. It suffices to show that the relations of the framed BMW algebra are satisfied. Let $v=v_1\otimes \cdots \otimes v_n \in V^{\otimes n}$ with $v_i$ being in the summand $V_{a_i}$ of $V$. 
  
  Let us start with \eqref{FBMW1}. If $a_i\neq a_{i+1}$, then $E_i$ acts by $0$ and $s_i$ acts on $v$ by permuting the components $v_{a_i}$ and $v_{a_{i+1}}$. Therefore the element $e_i$ acts by $0$ on $v$ and the relation \eqref{FBMW1} is satisfied. If $a_{i}=a_{i+1}$, then $E_i$ acts by the identity on $v$ and the relation \eqref{FBMW1} is satisfied by \cref{thm:SW_BMW}.
  
  Relation \eqref{FBMW2} is proven in the same fashion. The term $e_iE_{i+1}$ acts on $v$ by $0$ unless $a_i=a_{i+1}=a_{i+2}$ where it acts by the identity on $v$. In that second case, Relation \eqref{FBMW2} is once again a consequence of \cref{thm:SW_BMW}.
\end{proof}

From \cref{sec:framedbraid}, since the morphisms $\phi_b$ are surjective, we have in both cases  above:
\begin{equation}\label{iso-imageframed2'}
  \Phi(\FBMW_{d,n}) \simeq \bigoplus_{\nu \vDash_d n}\Mat_{\binom{n}{\nu}}\Bigl(\End_{A_1}(V_1^{\otimes \nu_1})\otimes \cdots \otimes \End_{A_d}(V_d^{\otimes \nu_d})\Bigr)\,.
\end{equation}
where for simplicity we omit the parameters of the algebras. Moreover, if $n\leq N$, the morphisms $\phi_b$ are isomorphisms, and therefore
\begin{equation}\label{iso-imageframed3'}
  \Phi(\FBMW_{d,n}) \simeq \bigoplus_{\nu \vDash_d n}\Mat_{\binom{n}{\nu}}\Bigl(\BMW_{\nu_1}\otimes \cdots \otimes \BMW_{\nu_d}\Bigr)\,.
\end{equation}
At this point it is natural to ask about the injectivity of $\Phi$. One way to answer this is to have an upper bound on the dimension of $\FBMW_{d,n}$ corresponding to the dimension of the right hand side of \eqref{iso-imageframed3'}. 

Actually, we conjecture the following natural isomorphism theorem similar to the ones obtained for the Yokonuma--Hecke algebra and their Temperley--Lieb versions. 

\begin{conj}\label{conjFBMW}
  Over some subring of $\mathbb{C}(q,a)$, we have
  \begin{equation}\label{iso-BMW}
    \FBMW_{d,n} \simeq \bigoplus_{\nu \vDash_d n}\Mat_{\binom{n}{\nu}}\Bigl(\BMW_{\nu_1}\otimes \cdots \otimes \BMW_{\nu_d}\Bigr)\,.
  \end{equation}
\end{conj}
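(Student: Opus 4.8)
Since the map $\Phi$ of \cref{prop:FBMW} is surjective and \eqref{iso-imageframed3'} already identifies its image with the right-hand side of \eqref{iso-BMW} whenever $n\le N$, the entire content of the conjecture is the injectivity of $\Phi$ at a suitable specialization, or equivalently the dimension bound
\[
  \dim_{\Bbbk}\FBMW_{d,n}\ \le\ \sum_{\nu\vDash_d n}\binom{n}{\nu}^2\prod_{b=1}^d\dim\BMW_{\nu_b}\ =:\ D ,
\]
since the algebra on the right of \eqref{iso-BMW} has dimension exactly $D$ (recall $\dim\BMW_k=(2k-1)!!$). So the plan is to produce a spanning set of $\FBMW_{d,n}$ of cardinality $D$, in the spirit of the proofs of \cref{thm_YH} and \cref{thm_FTL}. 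The novelty, and the reason this is only a conjecture, is that one cannot keep $a$ generic while imposing $n\le N$, so the clean Yokonuma--Hecke argument (where surjectivity onto a space of the right dimension forces an isomorphism over a generic field) is unavailable and must be replaced by a genuine basis theorem.

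The natural candidate for the spanning set is a family of \emph{coloured Brauer monomials}. Over a subring $R\subset\mathbb{C}(q,a)$ in which $q-q^{-1}$ and the loop parameter $\tfrac{a-a^{-1}}{q-q^{-1}}+1$ of \eqref{FBMW5} are invertible, I would start from a Brauer-diagram basis $\{D_\alpha\}$ of the ordinary algebra $\BMW_n$ (of cardinality $(2n-1)!!$, e.g.\ the Morton--Wassermann tangle basis), consider the elements $t_1^{a_1}\cdots t_n^{a_n}D_\alpha$ with $a_i\in\{0,\dots,d-1\}$, and reduce them using the framing relations. The decisive reduction is the merge relation extracted in the proof of \cref{lem:FBMW}, namely $t_ie_i=t_{i+1}e_i$ together with $e_iE_i=e_i$ from \eqref{FBMWeE}: whenever $D_\alpha$ contains a cup or a cap (an occurrence of some $e_j$), the two framings at its endpoints are forced to coincide. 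Propagating this through the diagram makes the framing constant along each strand and merged across every turn-back, so that the free framing data is carried only by the through-strands. A bookkeeping of colours then shows that the surviving monomials are indexed precisely by a composition $\nu\vDash_d n$, an assignment of colours to the top and to the bottom points ($\binom{n}{\nu}$ choices each) and a Brauer diagram of each colour ($\dim\BMW_{\nu_b}$ choices), giving cardinality $D$.

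To show that this set spans, I would set up a rewriting procedure: push every $t_i$ to the left through $s_it_j=t_{s_i(j)}s_i$ to reach the form $t_1^{a_1}\cdots t_n^{a_n}w$; reduce the $\BMW$-part $w$ to a combination of the $D_\alpha$ using the cubic relation \eqref{FBMW6}, the relations \eqref{FBMW1}--\eqref{FBMW2} and those of \cref{lem:FBMW}; and finally normalise the framing exponents by the merge relation. For the lower bound and the identification, I would specialise $a=q^{N-1}$ with $N\ge n$: then \cref{prop:FBMW} and \eqref{iso-imageframed3'} make $\Phi$ surjective onto an algebra of dimension $D$, so the specialised algebra has dimension exactly $D$ and the $D$ coloured Brauer monomials are linearly independent there. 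As this holds for all $N\ge n$, any $\mathbb{C}(q,a)$-linear dependence among these monomials would give a nonzero Laurent polynomial vanishing on the infinitely many distinct curves $\{a=q^{N-1}\}$, which is impossible; hence they are independent over $\mathbb{C}(q,a)$, $\FBMW^R_{d,n}$ is free of rank $D$, and \eqref{iso-BMW} follows by matching this basis with the analogous $R$-basis of $\bigoplus_\nu\Mat_{\binom{n}{\nu}}(\BMW_{\nu_1}\otimes\cdots\otimes\BMW_{\nu_d})$, exactly as the direct isomorphisms were obtained for the Yokonuma--Hecke and framed Temperley--Lieb algebras in \cite{JPA,CP}.

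The main obstacle is the spanning statement, that is, the framed analogue of the basis theorem for $\BMW_n$: proving that the rewriting procedure terminates and is confluent, and in particular controlling the interplay between the framing merges and the cup/cap creation produced by the cubic relation \eqref{FBMW6}. This is already the delicate point for the ordinary BMW algebra, and the presence of the idempotent $E_i$ in the definition \eqref{defei-framed} of $e_i$ makes the tracking of framings along turn-backs the genuinely new difficulty; securing the correct integral form over $R$ so that this reduction is valid is precisely what the phrase ``over some subring of $\mathbb{C}(q,a)$'' in the statement is meant to accommodate.
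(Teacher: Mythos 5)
This statement is a \emph{conjecture} in the paper: the authors give no proof, remarking only that the expected dimension formula ``was checked for small values of $n$ and $d$.'' So there is no ``paper proof'' to match your proposal against; the question is whether your argument closes the conjecture, and it does not. Your reduction of the problem is correct and is exactly the route the authors themselves point to: since \eqref{iso-imageframed3'} identifies $\Phi(\FBMW_{d,n})$ with the right-hand side of \eqref{iso-BMW} when $n\leq N$, everything comes down to the upper bound $\dim\FBMW_{d,n}\leq D$, i.e.\ to exhibiting a spanning set of cardinality $D=\sum_{\nu}\binom{n}{\nu}^2\prod_b(2\nu_b-1)!!$. Your proposed spanning set of coloured Brauer monomials has the right cardinality, and your lower-bound argument (specialising $a=q^{N-1}$ for infinitely many $N\geq n$ and using that a nonzero Laurent polynomial in $q,a$ cannot vanish on all these curves) is sound \emph{conditional} on the spanning statement.

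The genuine gap is precisely that spanning statement, which you yourself flag as ``the main obstacle'': you do not prove that the rewriting procedure terminates, is confluent, or even that the merge relations propagate as claimed when the cubic relation \eqref{FBMW6} creates new occurrences of $e_i$ carrying the idempotent $E_i$. For the ordinary BMW algebra the analogous basis theorem is already a substantive result (Morton--Wassermann), and here the interaction between the framing idempotents and the turn-backs is the new and unresolved difficulty --- note for instance that the relation $e_iE_i=e_i$ had to be \emph{derived} in \cref{lem:FBMW} from the specific form of $e_i$, while its braids-and-ties analogue \eqref{BTBMWeE} had to be imposed as an axiom because it does not follow; this is a warning that the reduction of framings along cups and caps is not automatic. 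Until the upper bound $\dim\FBMW_{d,n}\leq D$ is actually established, the statement remains a conjecture, and your proposal is a plausible strategy outline rather than a proof. (A minor additional inaccuracy: \cref{prop:FBMW} does not assert surjectivity of $\Phi$ onto the full centralizer --- indeed $\Hom_{A_b}(V_b^{\otimes r},V_b^{\otimes s})$ does not vanish for $r\neq s$ of the same parity in the orthogonal and symplectic cases --- but you only use the identification of the image in \eqref{iso-imageframed3'}, so this does not affect your argument.)
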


In particular, we conjecture that the dimension of the algebra $\FBMW_{d,n}$ is:
\[
  \dim\FBMW_{d,n}=\sum_{\nu\vDash_d n}\binom{n}{\nu}^2\frac{(2\nu_1)!\dots(2\nu_d)!}{2^n\nu_1!\dots \nu_d!}\ .
\]
This was checked for small values of $n$ and $d$. For $d=2$, the sequence of dimensions start with $1,2,10,84,1014,16140$ for $n=0,1,2,3,4,5$. This does not seem to be on \cite{OEIS}.

\subsection{Affine and cyclotomic Yokonuma--Hecke algebras}
\label{sec:cyclotomic-YH}

For $b=1,\dots,d$, we take once again $A_b=U_q(\mathfrak{gl}_{N_b})$ and $V_b$ the vector representation of dimension $N_b$. We also take a module $M_b$ in the category $\mathcal{O}$ for $U_q(\mathfrak{gl}_{N_b})$. In this case, we have for each $b$ a morphism
\[
  \phi^{\mathrm{aff}}_b\ :\ \Hec_n^{\mathrm{aff}}\to\End_{A_b}(M_b\otimes V_b^{\otimes n})\ ,
\]
where the affine Hecke algebra $\Hec_n^{\mathrm{aff}}$ is the quotient of the algebra of the affine braid group $\Bbbk[\mathcal{B}_n^{\mathrm{aff}}]$ by the Hecke relation $s_i^2=(q-q^{-1})s_i+1$ for all $i=1,\dots,n-1$. We refer to \cite{OR} where it is also shown that this morphism is surjective if $M_b$ is a finite-dimensional irreducible module. Another situation where $\phi^{\mathrm{aff}}_b$ is surjective is when $M_b$ is a parabolic universal Verma module, see \cite{LV}.

Now recall from \cite{CPA} the definition of the affine Yokonuma--Hecke algebra $\YH_{d,n}^{\mathrm{aff}}$ as the quotient of the algebra $\Bbbk[\mathcal{FB}_{d,n}^{\mathrm{aff}}]$ of the framed affine braid group by the quadratic relation $s_i^2=(q-q^{-1})E_is_i+1$ for all $i=1,\dots,n-1$. The following result is obtained immediately combining the results from \cref{sec:braid-B} with the calculation already made in \cref{thm_YH}.

\begin{thm}
  \label{thm_YHaff}
  There exists a homomorphism 
  \[
    \Phi^{\mathrm{aff}}\ \ :\ \ \YH^{\mathrm{aff}}_{d,n}\to\End_{A}(M\otimes V^{\otimes n})\ .
  \]
\end{thm}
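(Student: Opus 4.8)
The plan is to construct $\Phi^{\mathrm{aff}}$ by restricting the morphism from the framed affine braid group algebra already produced in \cref{thm:action-framed-aff}, and then to check that the single extra defining relation of the affine Yokonuma--Hecke algebra is annihilated. Concretely, in the present setting each $A_b=U_q(\mathfrak{gl}_{N_b})$ with $V_b$ its vector representation and $M_b$ a module in category $\mathcal{O}$; the $R$-matrix provides $\check{R}^{(b)}$ as in \eqref{braid_in_Ab} and the double braiding provides $\check{K}^{(b)}$ as in \eqref{braidaff_in_Ab}, so the hypotheses of \cref{thm:action-framed-aff} are met and its item (i) directly yields an algebra morphism $\Phi^{\mathrm{aff}}\colon\Bbbk\mathcal{FB}^{\mathrm{aff}}_{d,n}\to\End_A(M\otimes V^{\otimes n})$.

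The only thing left is to verify that the relation $s_i^2=(q-q^{-1})E_is_i+1$, for $i=1,\dots,n-1$, holds after applying $\Phi^{\mathrm{aff}}$. The key observation is that this relation involves neither the affine generator $s_0$ nor the boundary module $M$. Indeed, for $i\geq 1$ we have $\Phi^{\mathrm{aff}}(s_i)=\sigma_i$ and $\Phi^{\mathrm{aff}}(E_i)=\varepsilon_i$ (by \cref{action_E}), and these operators act as $\Id_M$ tensored with the corresponding operators on $V^{\otimes n}$; hence any polynomial identity among them on $V^{\otimes n}$ automatically lifts to $M\otimes V^{\otimes n}$. Since every $\check{R}^{(b)}$ satisfies the Hecke characteristic equation \eqref{Hecke_relation}, with eigenvalues $q,-q^{-1}$ independent of $b$, the relations \eqref{chareq} apply verbatim and give
\[
  \varepsilon_i\bigl(\sigma_i^2-(q-q^{-1})\sigma_i-1\bigr)=0
  \qquad\text{and}\qquad
  (1-\varepsilon_i)(\sigma_i^2-1)=0\,.
\]
Adding these yields $\sigma_i^2=(q-q^{-1})\varepsilon_i\sigma_i+1$, which is exactly $\Phi^{\mathrm{aff}}$ applied to the quadratic relation. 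Therefore $\Phi^{\mathrm{aff}}$ factors through the quotient $\YH^{\mathrm{aff}}_{d,n}$, producing the desired homomorphism.

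I expect no genuine obstacle here: this is simply the one-boundary counterpart of \cref{thm_YH}, and the entire content is that passing from $\mathcal{FB}_{d,n}$ to $\mathcal{FB}^{\mathrm{aff}}_{d,n}$ only adjoins the generator $s_0$, which does not appear in the extra relation defining $\YH^{\mathrm{aff}}_{d,n}$. Thus the verification of that relation is word-for-word the one carried out in the proof of \cref{thm_YH} and requires nothing new. Note also that the statement asserts only the existence of the homomorphism, with no surjectivity or isomorphism claim, so neither an appeal to \cref{cor:iso_imageframed_aff} nor any dimension count is needed.
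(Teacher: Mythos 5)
Your proposal is correct and follows exactly the route the paper takes: the paper's own proof is the one-line remark that the theorem "is obtained immediately combining the results from \cref{sec:braid-B} with the calculation already made in \cref{thm_YH}," which is precisely your argument of restricting the morphism of \cref{thm:action-framed-aff} and reusing the quadratic-relation check, since that relation involves neither $s_0$ nor $M$. Nothing further is needed.
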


It is easily obtained using the results from \cref{sec:braid-B} that it is surjective if for example each $M_b$ is a finite-dimensional irreducible module or a universal parabolic Verma module.

Now for simplicity, assume that all $N_b$'s are equal to a number $N$ and that all $M_b$'s are the same irreducible $U_q(\mathfrak{gl}_N)$-module $M^{(0)}$. Then all maps $\phi_b^{\mathrm{aff}}$ factors through the same cyclotomic quotient (or Ariki--Koike algebra)
\[
  \phi^{\mathrm{aff}}_b\ :\ \Hec_{m,n}^{\mathrm{cyc}}\to\End_{A_b}(M_b\otimes V_b^{\otimes n})\ ,
\]
where $\Hec_{m,n}^{\mathrm{cyc}}$ is the quotient of $\Hec_n^{\mathrm{aff}}$ by the relation $(s_0-\lambda_1)\dots (s_0-\lambda_m)=0$, where the eigenvalues $\lambda_1,\dots,\lambda_m$ depends on the choice of the module $M^{(0)}$. The number $m$ of eigenvalues is the number of irreducible components in the decomposition of $M^{(0)}\otimes V^{(0)}$ and these eigenvalues are computed in \cite[Remark after Proposition 5.1]{DR}.

In this situation, we have obviously that the morphism $\Phi^{\mathrm{aff}}$ from \cref{thm_YHaff} factors through a quotient of $\YH^{\mathrm{aff}}_{d,n}$ called in \cite{CPA} the cyclotomic Yokonuma--Hecke algebra. This is defined as the quotient of $\YH^{\mathrm{aff}}_{d,n}$ by the same relation $(s_0-\lambda_1)\dots (s_0-\lambda_m)=0$ and we denote this algebra by $\YH^{\mathrm{cyc}}_{d,m,n}$.

We now give conditions when the map $\Phi^{\mathrm{aff}}$ from Theorem \eqref{thm_YHaff} is an isomorphism between the cyclotomic Yokonuma--Hecke algebra and the endomorphism algebra $\End_{A}(M\otimes V^{\otimes n})$. Suppose that $M^{(0)}$ is the finite dimensional representation of $U_q(\mathfrak{gl}_N)$ associated to the partition $\mu$, which is of length at most $N$. The number of summands of $M^{(0)}\otimes V^{(0)}$ is given by the number of partitions of length at most $N$ obtained from $\mu$ by adding one box. Denote by $1=r_1<r_2<\cdots<r_m\leq N$ (resp. $c_1>c_2>\cdots>c_m$) the rows (resp. columns) of addable boxes of $\mu$. 

\begin{lem}\label{lem:iso_YH_glN}
  The map $\phi^{\mathrm{aff}}_b \colon \Hec_{m,n}^{\mathrm{cyc}}\rightarrow \End_{A^{(0)}}(M^{(0)}\otimes (V^{(0)})^{\otimes n})$ is an isomorphism if and only if $n\leq c_i-c_{i+1}$, $n\leq r_{i+1}-r_i$ and $n\leq N+1-r_m$ for all $1 \leq i < m$.
\end{lem}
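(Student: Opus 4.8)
The plan is to leverage the surjectivity of $\phi^{\mathrm{aff}}_b$ (which holds because $M^{(0)}$ is finite-dimensional irreducible, by \cite{OR}) together with the fact that the cyclotomic Hecke algebra is free of rank $m^nn!$. Since $\phi^{\mathrm{aff}}_b$ is onto, it is an isomorphism if and only if $\dim\End_{A^{(0)}}(M^{(0)}\otimes(V^{(0)})^{\otimes n})=m^nn!$, and more precisely its kernel is detected by the common generalized eigenspaces of the images of the Jucys--Murphy elements $X_1,\dots,X_n$ of $\Hec^{\mathrm{cyc}}_{m,n}$. Decomposing $M^{(0)}\otimes(V^{(0)})^{\otimes n}=\bigoplus_\lambda V_\lambda^{\oplus m_\lambda}$ as a $U_q(\mathfrak{gl}_N)$-module, where $\lambda$ runs over partitions of length at most $N$ obtained from $\mu$ by adding $n$ boxes and $m_\lambda$ is the number of standard tableaux of the skew shape $\lambda/\mu$ (by iterated Pieri), one has $\dim\End_{A^{(0)}}(M^{(0)}\otimes(V^{(0)})^{\otimes n})=\sum_\lambda m_\lambda^2$. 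The statement thus reduces to comparing $\sum_\lambda m_\lambda^2$ with $\sum_{\boldsymbol\lambda}(f^{\boldsymbol\lambda})^2=m^nn!$, the latter sum being over $m$-tuples $\boldsymbol\lambda=(\lambda^{(1)},\dots,\lambda^{(m)})$ of partitions of total size $n$.

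The geometric observation I would isolate first is that the addable corners $(r_1,c_1),\dots,(r_m,c_m)$ cut $\mu$ into horizontal blocks of constant row-length, and that the $i$-th corner can be grown rightward by exactly $c_{i-1}-c_i$ boxes (unboundedly for $i=1$) and downward by exactly $r_{i+1}-r_i$ boxes (and by $N+1-r_m$ for $i=m$, the length constraint on $\lambda$ providing the floor). Consequently a partition $\lambda^{(i)}$ of size at most $n$ can be grafted at corner $i$, independently of the others, precisely when its number of columns is at most $c_{i-1}-c_i$ and its number of rows is at most $r_{i+1}-r_i$ (resp. $N+1-r_m$). Under the hypotheses $n\le c_i-c_{i+1}$, $n\le r_{i+1}-r_i$ and $n\le N+1-r_m$, every such $\lambda^{(i)}$ fits, and since consecutive blocks are then separated by at least $n$ in both the row and the column direction, the grafted regions occupy pairwise disjoint bands of rows and of columns. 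This yields an assembly map $\boldsymbol\lambda\mapsto\lambda$ which I claim is a bijection onto the reachable shapes, with $m_\lambda=f^{\boldsymbol\lambda}=\binom{n}{|\lambda^{(1)}|,\dots,|\lambda^{(m)}|}\prod_i f^{\lambda^{(i)}}$, since the disjointness of bands makes a standard tableau of $\lambda/\mu$ nothing but an independent shuffle of standard tableaux of the individual regions. Summing squares gives $\sum_\lambda m_\lambda^2=\sum_{\boldsymbol\lambda}(f^{\boldsymbol\lambda})^2=m^nn!$, proving the \emph{if} direction.

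For the \emph{only if} direction I would argue by contraposition: if one inequality fails, the corresponding extremal multipartition cannot be realized. If $r_{i+1}-r_i<n$ (or $N+1-r_m<n$) the column $\lambda^{(i)}=(1^n)$ overflows downward into the next block (or past row $N$), while if $c_i-c_{i+1}<n$ the row $\lambda^{(i+1)}=(n)$ overflows rightward; in each case the content multiset $c_i-r_i,\,c_i-r_i-1,\dots$ (resp. the rightward contents) prescribed by that multipartition is never attained along any path in $M^{(0)}\otimes(V^{(0)})^{\otimes n}$. Hence the associated common generalized eigenspace of the images of $X_1,\dots,X_n$ vanishes, so the corresponding simple $\Hec^{\mathrm{cyc}}_{m,n}$-module lies in the kernel of $\phi^{\mathrm{aff}}_b$ and the map is not injective; equivalently $\sum_\lambda m_\lambda^2<m^nn!$, as one already sees on the toy case $\mu=(2,1)$, $n=2$, where $\sum_\lambda m_\lambda^2=14<18=m^nn!$.

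The step I expect to be the main obstacle is establishing that the assembly map is genuinely onto the reachable shapes, that is, that under the separation hypotheses no standard filling of $\lambda/\mu$ can transport a box out of its originating band. Morally this holds because escaping an $n$-separated band costs more than $n$ boxes, but turning this budget heuristic into a clean argument—simultaneously for the row, the column, and the length-$N$ constraints—is where the real combinatorial work lies. By contrast, the independence statement $m_\lambda=f^{\boldsymbol\lambda}$ and the verification that the grafted diagram is again a partition are comparatively routine consequences of the disjoint-band structure.
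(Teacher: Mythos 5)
Your route is the one the paper itself only sketches: the printed proof is a one-line appeal to \cite[Theorem~6.20]{OR} for the dimension of the endomorphism algebra, followed by precisely the ``compare the Bratteli diagram of $M^{(0)}\otimes (V^{(0)})^{\otimes n}$ with the poset of $m$-partitions'' argument that you carry out in detail. Your \emph{if} direction is correct, and the step you single out as the main obstacle is in fact not one: the band decomposition is a property of the shape $\lambda$ alone, not of any standard filling. Every row $1,\dots,N$ lies in exactly one block $[r_i,r_{i+1})$ (resp.\ $[r_m,N]$), on which $\mu$ is constant equal to $c_i-1$; hence for any $\lambda\supseteq\mu$ the added boxes in block $i$ automatically form a partition $\lambda^{(i)}$ anchored at column $c_i$, the only cross-block constraint being $\lambda_{r_{i+1}}\le\lambda_{r_{i+1}-1}$, which is automatic once $c_i-c_{i+1}\ge n$. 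Column-disjointness of the bands then follows from $c_{i+1}+\lambda^{(i+1)}_1-1\le c_{i+1}+n-1<c_i$, and with it the factorization $m_\lambda=f^{\boldsymbol{\lambda}}$.

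The one genuine (but repairable) soft spot is in your \emph{only if} direction. Speaking of ``the corresponding simple $\Hec^{\mathrm{cyc}}_{m,n}$-module'' attached to a multipartition presupposes that the Ariki--Koike algebra at the specialized parameters determined by the contents $c_i-r_i$ is semisimple, and precisely when the separation conditions fail this can break down (e.g.\ $\mu=(1)$, $n=3$ gives a ratio of parameters equal to $q^{4}$ with $2<n$). The clean fix avoids simple modules altogether: the assignment $\lambda\mapsto\boldsymbol{\lambda}$ reading off the added boxes block by block is always defined and injective, with $m_\lambda\le f^{\boldsymbol{\lambda}}$ since cross-block column overlaps can only impose extra constraints on standard fillings; hence $\sum_\lambda m_\lambda^2\le\sum_{\boldsymbol{\lambda}\ \mathrm{realizable}}(f^{\boldsymbol{\lambda}})^2$, and the failure of any one inequality makes the extremal multipartition with $(1^n)$ or $(n)$ in the offending slot unrealizable, forcing $\sum_\lambda m_\lambda^2<m^nn!=\dim\Hec^{\mathrm{cyc}}_{m,n}$, which together with the surjectivity of $\phi^{\mathrm{aff}}_b$ rules out injectivity. (Also, in your toy example $\mu=(2,1)$, $n=2$, the count is $1+4+4+4=13$, not $14$.)
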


\begin{proof}
  This follows from \cite[Theorem 6.20]{OR}, which provides the dimension of the endomorphism algebra. An equivalent argument would be to compare the Bratelli diagram describing the branching rule of $M^{(0)}\otimes (V^{(0)})^{\otimes n}$ and the poset of $m$-partitions: they are equal at the levels $n$ satisfying the condition of the lemma, which implies the equality of dimensions of $\Hec_{m,n}^{\mathrm{cyc}}$ and $\End_{A^{(0)}}(M^{(0)}\otimes (V^{(0)})^{\otimes n})$.
\end{proof}

Using the results obtained in \cref{sec:framedbraid} we immediately obtain the following proposition.

\begin{prop}\label{prop:iso_cyclo_YH_glN}
  The map $\Phi^{\mathrm{aff}}\colon \YH^{\mathrm{cyc}}_{d,m,n} \rightarrow \End_{A^{(0)}}(M\otimes V^{\otimes n})$ is an isomorphism if and only if $n\leq c_i-c_{i+1}$, $n\leq r_{i+1}-r_i$ and $n\leq N+1-r_m$ for all $1 \leq i < m$.
\end{prop}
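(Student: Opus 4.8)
The plan is to reduce the cyclotomic Yokonuma–Hecke statement to the already-established $\mathfrak{gl}_N$ statement of \cref{lem:iso_YH_glN}, exactly as the proof of \cref{thm_YH} reduced the Yokonuma–Hecke isomorphism to the Hecke-algebra Schur–Weyl duality of \cref{thm_SW_Hecke}. First I would invoke \cref{thm_YHaff} together with the surjectivity remark following it: since each $M_b=M^{(0)}$ is a finite-dimensional irreducible module, every $\phi^{\mathrm{aff}}_b$ is surjective, so the map $\Phi^{\mathrm{aff}}\colon \YH^{\mathrm{cyc}}_{d,m,n}\to\End_A(M\otimes V^{\otimes n})$ is surjective as well. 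Thus the content of the proposition is entirely the question of \emph{injectivity}, which as in \cref{thm_YH} becomes a matter of matching dimensions.

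Next I would apply the second item of \cref{thm:action-framed-aff}, specialized to the common situation $A_b=U_q(\mathfrak{gl}_N)$, $V_b=V^{(0)}$, $M_b=M^{(0)}$. Because all the maps $\phi^{\mathrm{aff}}_b$ factor through the same cyclotomic quotient $\Hec^{\mathrm{cyc}}_{m,n}$, the image decomposes as
\[
  \Phi^{\mathrm{aff}}(\YH^{\mathrm{cyc}}_{d,m,n}) \simeq \bigoplus_{\nu \vDash_d n}\Mat_{\binom{n}{\nu}}\Bigl(\phi^{\mathrm{aff}}_1(\Hec^{\mathrm{cyc}}_{m,\nu_1})\otimes \cdots \otimes \phi^{\mathrm{aff}}_d(\Hec^{\mathrm{cyc}}_{m,\nu_d})\Bigr)\,.
\]
Under the stated numerical hypotheses on $n$, each $\nu_b\leq n$ also satisfies them, so \cref{lem:iso_YH_glN} applies at each level $\nu_b$ and identifies $\phi^{\mathrm{aff}}_b(\Hec^{\mathrm{cyc}}_{m,\nu_b})$ with the full centralizer $\End_{A^{(0)}}(M^{(0)}\otimes (V^{(0)})^{\otimes\nu_b})\cong \Hec^{\mathrm{cyc}}_{m,\nu_b}$. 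Hence the right-hand side is the full $\End_A(M\otimes V^{\otimes n})$, and the computed dimension of the image equals $\sum_{\nu\vDash_d n}\binom{n}{\nu}^2\prod_b \dim\Hec^{\mathrm{cyc}}_{m,\nu_b}$.

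It remains to bound $\dim\YH^{\mathrm{cyc}}_{d,m,n}$ from above by this same number, so that surjectivity forces injectivity. For this I would produce a spanning set of $\YH^{\mathrm{cyc}}_{d,m,n}$ of the expected cardinality, mimicking the $t_1^{a_1}\cdots t_n^{a_n}s_w$ argument in \cref{thm_YH} but now incorporating the affine generator $s_0$ and its cyclotomic truncation by the degree-$m$ relation; the expected basis size is $d^n\,m^n\,n!$, matching $\sum_{\nu}\binom{n}{\nu}^2\prod_b\dim\Hec^{\mathrm{cyc}}_{m,\nu_b}$ via the standard multinomial identity. For the converse direction, if one of the inequalities fails then \cref{lem:iso_YH_glN} shows the corresponding $\phi^{\mathrm{aff}}_b$ is not injective, so $\Phi^{\mathrm{aff}}$ cannot be either, since the relevant cyclotomic relation sits inside $\YH^{\mathrm{cyc}}_{d,m,n}$ but lies in the kernel.

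I expect the main obstacle to be the dimension upper bound for $\YH^{\mathrm{cyc}}_{d,m,n}$: whereas the finite Yokonuma–Hecke case has a clean spanning set already recorded in the literature, the cyclotomic affine version requires controlling the interaction between the truncated $s_0$-powers, the framing generators $t_i$, and the braid generators. The cleanest route is probably to cite an existing basis theorem for $\YH^{\mathrm{cyc}}_{d,m,n}$ from \cite{CPA}, in which case the argument is immediate; absent that, one must verify that the cyclotomic relation reduces all powers of the Jucys–Murphy-type elements so that the $d^n m^n n!$ spanning set genuinely spans, which is the only genuinely non-formal step.
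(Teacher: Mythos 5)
Your proposal follows essentially the same route as the paper: surjectivity plus the matrix-algebra decomposition of the image (the paper packages your two steps into \cref{cor:iso_imageframed_aff}), identification of each factor with $\Hec^{\mathrm{cyc}}_{m,\nu_b}$ via \cref{lem:iso_YH_glN}, and then the dimension count $\sum_{\nu}\binom{n}{\nu}^2\prod_b m^{\nu_b}\nu_b!=(dm)^n n!$ matched against the known dimension of $\YH^{\mathrm{cyc}}_{d,m,n}$, which the paper indeed just cites (from \cite{CPA2}) rather than re-deriving, exactly as you anticipated. Your explicit treatment of the ``only if'' direction is a small but welcome addition that the paper leaves implicit.
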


\begin{proof}
  By \cref{cor:iso_imageframed_aff} we have an isomorphism
  \[
    \Phi^{\mathrm{aff}}(\YH^{\mathrm{cyc}}_{d,m,n}) \simeq \bigoplus_{\nu \vDash_d n}\Mat_{\binom{n}{\nu}}\Bigl(\End_{A^{(0)}}(M^{(0)}\otimes (V^{(0)})^{\otimes \nu_1})\otimes \cdots \otimes \End_{A^{(0)}}(M^{(0)}\otimes (V^{(0)})^{\otimes \nu_d})\Bigr)\,.
  \]

  Using \cref{lem:iso_YH_glN}, we obtain that $\End_{A^{(0)}}(M^{(0)}\otimes (V^{(0)})^{\otimes \nu_k}) \simeq H_{m,n}^{\mathrm{cyc}}$ for all $1\leq k \leq d$. Therefore $\Phi^{\mathrm{aff}}(\YH^{\mathrm{cyc}}_{d,m,n})$ is of dimension
  \[
    \sum_{\nu \vDash_d n}\binom{n}{\nu}^2m^{\nu_1}\nu_1!\cdots m^{\nu_d}\nu_d! = m^nn!\sum_{\nu \vDash_d n}\binom{n}{\nu} = (dm)^nn! = \dim(\YH^{\mathrm{cyc}}_{d,m,n}),
  \]
  and therefore $\Phi^{\mathrm{aff}}$ is an isomorphism. The last equality above is proved in \cite{CPA2}.
\end{proof}

As in the previous sections, we recover an isomorphism proved in \cite{Pou}
\[
  \YH^{\mathrm{cyc}}_{d,m,n} \simeq \bigoplus_{\nu\vDash_d n}\Mat_{\binom{n}{\nu}}(\Hec^{\mathrm{cyc}}_{m,\nu_1}\otimes \cdots \otimes \Hec^{\mathrm{cyc}}_{m,\nu_d}).
\]
Indeed, given $d,n$ and $m$, it suffices to find a partition $\mu$ and an integer $N$ large enough such that the conditions of \cref{prop:iso_cyclo_YH_glN} are satisfied. For example, one can choose $N\geq mn$ and $\mu=(((m-1)n)^n,((m-2)n)^n,\ldots,n^n)$, the exponent being repetition of entries:
\[
  \mu =
  \begin{tikzpicture}[baseline={(0,-1.3)},scale=.6]
    \draw[thick] (2,-3) -- (1,-3) -- (1,-4) -- (0,-4) -- (0,0) -- (4,0) -- (4,-1) -- (3,-1) -- (3,-2);
    \draw[thick, decoration={brace, raise=.1em},decorate] (4,0) -- (4,-1) node[anchor=south,yshift=.1em,xshift=.7em] {$n$};
    \draw[thick, decoration={brace, raise=.1em},decorate] (3,-1) -- (3,-2) node[anchor=south,yshift=.1em,xshift=.7em] {$n$};
    \draw[thick, decoration={brace, raise=.1em},decorate] (1,-3) -- (1,-4) node[anchor=south,yshift=.1em,xshift=.7em] {$n$};
    \draw[thick, decoration={brace, mirror, raise=.1em},decorate] (4,-1) -- (3,-1) node[anchor=south,yshift=.1em,xshift=.75em] {$n$};
    \draw[thick, decoration={brace, mirror, raise=.1em},decorate] (2,-3) -- (1,-3) node[anchor=south,yshift=.1em,xshift=.75em] {$n$};
    \draw[thick, decoration={brace, mirror, raise=.1em},decorate] (1,-4) -- (0,-4) node[anchor=south,yshift=.1em,xshift=.75em] {$n$};
    \node at (2.5,-2.3) {$\iddots$};
  \end{tikzpicture}
\]

Similar results can be obtained if $M^{(0)}$ is a universal parabolic Verma module, using the surjectivity result in \cite[Theorem 4.2]{LV}.

\begin{rem}
  We could as well consider specific choices of $N$ and of $M^{(0)}$, where the quotient of $\Hec_{m,n}^{\mathrm{cyc}}$ isomorphic to the endomorphism algebra of $M^{(0)}\otimes (V^{(0)})^{\otimes n}$ has an explicit description in terms of generators and relations. For example,
  \begin{enumerate}
  \item if $N=2$ and $M^{(0)}$ is the $k$-th symmetric power of the vector representation of $\mathfrak{gl}_2$, then for $n\leq k$, the centralizer of $M^{(0)}\otimes (V^{(0)})^{\otimes n}$ is isomorphic to a specialization of the blob algebra of Martin and Saleur \cite{MS}, also known as the one-boundary Temperley--Lieb algebra. See also \cite{PZ} for generalizations to $\mathfrak{gl}_N$.
  \item if $M^{(0)}$ is the irreducible representation of $\mathfrak{gl}_N$ given by the partition $((N-1)k,(N-2)k,\ldots,k)$, then for $n\leq k$, the centralizer of $M^{(0)}\otimes (V^{(0)})^{\otimes n}$ is isomorphic to a specialization of the generalized blob algebra of Martin and Woodcock \cite{MW}.
  \end{enumerate}

  In these two cases, we leave to the reader to find a presentation by generators and relations of the quotient of $\YH^{\mathrm{cyc}}_{d,m,n}$ ($m=2$ in the first case, $m=N$ in the second case) isomorphic to the centralizer $\End_A(M\otimes V^{\otimes n})$, thereby defining framizations of the one-boundary Temperley--Lieb algebra and of the generalized blob algebra.
\end{rem}

\section{Tied braid algebra and fixed points subalgebra of $\Bbbk[\mathcal{FB}_{d,n}]$}
\label{sec:tied-braid_fixedpoints}

In the section, we still suppose that $d$ is invertible in our base field $\Bbbk$ and that a primitive $d$-th root of unity exists in $\Bbbk$. We then fix once again a primitive $d$-th root of unity $\zeta$. 

\subsection{Another presentation of the group algebra of $\mathcal{FB}_{d,n}$}
\label{sec:presentation-framed-idempotents}

We give another presentation of the group algebra, over the field $\Bbbk$, of the framed braid group. This is similar to \cite[Section 2.2]{JPA2}, where another presentation of the Yokonuma--Hecke is given, in terms of idempotents.

\begin{defn}
  An ordered partition of $n$ in $d$ parts is a $d$-tuple $(I_1,\ldots,I_d)$ of subsets of $\{1,\ldots,n\}$ such that
  \begin{enumerate}
  \item if $a\neq b$ then $I_a\cap I_b = \emptyset$,
  \item $I_1\cup\cdots\cup I_d = \{1,\ldots,n\}$.
  \end{enumerate}
  
  We denote by $\mathcal{P}_d(n)$ the set of ordered partitions of $n$ in $d$ parts.
\end{defn}

Note that a part $I_a$ is allowed to be empty and that the order of the sets $(I_1,\ldots,I_d)$ in such an ordered partition is relevant. A direct application of the multinomial theorem shows that $\lvert \mathcal{P}_d(n) \rvert = d^n$. Another explanation of this equality is that the set $\mathcal{P}_d(n)$ parametrizes the one-dimensional representations of the group $(\mathbb{Z}/d\mathbb{Z})^n$, as developed below. We define the position of $j$ in an ordered partition $I=(I_1,\ldots,I_d)$, denoted by $\pos_j(I)$, as the unique integer $1 \leq a \leq d$ such that $j \in I_a$.

Given an element $I\in \mathcal{P}_d(n)$, we define an element $E_I$ in the group algebra of the group $(\mathbb{Z}/d\mathbb{Z})^n$, that we also consider in the group algebra of the framed braid group $\mathcal{FB}_{d,n}$, by
\begin{equation}\label{def_EI}
  E_I = \prod_{i=1}^n\left(\frac{1}{d}\sum_{l=1}^d \zeta^{-l(\pos_i(I)-1)}t_i^l\right).
\end{equation}
The elements $(E_I)_{I\in \mathcal{P}_d(n)}$ form a complete family of mutually orthogonal minimal central idempotents in $\Bbbk[(\mathbb{Z}/d\mathbb{Z})^n]$. They satisfy:
\[
  t_iE_I=E_It_i=\zeta^{\pos_i(I)-1}E_I\ .
\]
Note that we can recover the element $t_i$ from the $E_I$'s:
\[
  t_i = \sum_{a=1}^d\zeta^{(a-1)}\sum_{\substack{I\in \mathcal{P}_d(n)\\ \pos_i(I) = a}} E_I.
\]

Since $\{t_1^{k_1}\cdots t_n^{k_n} \sigma \mid 1 \leq k_i \leq d, \sigma\in \mathcal{B}_n\}$ is a basis of $\Bbbk[\mathcal{FB}_{d,n}]$ we deduce that
\[
  \{E_I \sigma \mid I\in \mathcal{P}_{d}(n), \sigma\in \mathcal{B}_n\}
\]
is a basis of $\Bbbk[\mathcal{FB}_{d,n}]$. We also have immediately the following alternative presentation of the algebra $\Bbbk[\mathcal{FB}_{d,n}]$.

\begin{prop}\label{other_presentation_FB}
  The group algebra $\Bbbk[\mathcal{FB}_{d,n}]$ has a presentation with generators $s_1,\ldots,s_{n-1}$ and $E_I$, $I\in \mathcal{P}_d(n)$ with relations
  \begin{align*}
    s_is_j &= s_js_i, & \text{if }\lvert i-j \rvert >1,\\
    s_is_js_i &= s_js_is_j, & \text{if }\lvert i-j\rvert =1,\\
    E_IE_J &= \delta_{I,J}, & \text{for } I,J\in \mathcal{P}_n(d),\\
    \sum_{I\in \mathcal{P}_n(d)}E_I &= 1, &\\
    s_i E_I &= E_{s_i(I)}s_i, & \text{for } 1 \leq i < n \text{ and } I\in \mathcal{P}_d(n).
  \end{align*}
Here, $s_i(I)$ denotes the element of $\mathcal{P}_d(n)$ obtained from $I=(I_1,\ldots,I_d)$ by applying the transposition $(i, i+1)$ to each $I_k$.
\end{prop}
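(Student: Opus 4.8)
The plan is to exhibit mutually inverse algebra homomorphisms between the algebra $\tilde{A}$ defined by the presentation in the statement and the group algebra $\Bbbk[\mathcal{FB}_{d,n}]$, where throughout we regard the generators $s_i$ of $\tilde{A}$ as invertible, as is implicit in a presentation of a group algebra. Everything rests on the two facts recorded just before the statement: the elements $E_I$ of \eqref{def_EI} form a complete system of orthogonal idempotents in $\Bbbk[\mathcal{FB}_{d,n}]$ with $t_iE_I=\zeta^{\pos_i(I)-1}E_I$, and $t_i$ is recovered from them by $t_i=\sum_{a=1}^d\zeta^{a-1}f_{i,a}$, where $f_{i,a}=\sum_{\pos_i(I)=a}E_I$. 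For fixed $i$, the $f_{i,a}$ are themselves orthogonal idempotents summing to $1$ over $a$.

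First I would construct $\Psi\colon\tilde{A}\to\Bbbk[\mathcal{FB}_{d,n}]$ sending each abstract generator to the element of the same name (with $E_I$ going to \eqref{def_EI}). This is well defined because all five families of relations hold in $\Bbbk[\mathcal{FB}_{d,n}]$: the braid relations hold among the $s_i$, the orthogonality and completeness of the $E_I$ were just recalled, and $s_iE_I=E_{s_i(I)}s_i$ follows by conjugating \eqref{def_EI} through the relation $s_it_j=t_{s_i(j)}s_i$. The map $\Psi$ is surjective, since its image contains the $s_i$ and, through the recovery formula, all the $t_i$, which together generate $\Bbbk[\mathcal{FB}_{d,n}]$.

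Next I would construct an inverse $\Theta\colon\Bbbk[\mathcal{FB}_{d,n}]\to\tilde{A}$ using the universal property of a group algebra, sending $s_i\mapsto s_i$ and $t_i\mapsto\sum_{a=1}^d\zeta^{a-1}f_{i,a}$ to units of $\tilde{A}$ (the image of $t_i$ is a unit with inverse $\sum_a\zeta^{-(a-1)}f_{i,a}$). I then verify the defining relations of $\mathcal{FB}_{d,n}$: the braid relations are immediate; $t_i^d\mapsto\sum_a\zeta^{(a-1)d}f_{i,a}=\sum_af_{i,a}=1$; the $t_i$ commute because the $f_{i,a}$ do; and $s_it_j=t_{s_i(j)}s_i$ is obtained from $s_iE_I=E_{s_i(I)}s_i$ after reindexing ordered partitions by $I\mapsto s_i(I)$, using that $\pos_j(I)=\pos_{s_i(j)}(s_i(I))$.

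Finally I would check $\Psi\Theta=\mathrm{id}$ and $\Theta\Psi=\mathrm{id}$ on generators. The equality $\Psi\Theta(t_i)=t_i$ is precisely the recovery formula, and $\Psi\Theta(s_i)=s_i$ is trivial. The one computation of substance is $\Theta\Psi(E_I)=E_I$: substituting $\Theta(t_i^l)=\sum_a\zeta^{l(a-1)}f_{i,a}$ into \eqref{def_EI} and invoking the discrete orthogonality $\frac{1}{d}\sum_{l=1}^d\zeta^{l(a-\pos_i(I))}=\delta_{a,\pos_i(I)}$ collapses the $i$-th factor to $f_{i,\pos_i(I)}$, after which expanding $\prod_{i=1}^n f_{i,\pos_i(I)}$ and applying $E_JE_{J'}=\delta_{J,J'}E_J$ leaves only $E_I$. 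The main thing to get right is the bookkeeping of positions under the $s_i$-action on ordered partitions in the relation $s_it_j=t_{s_i(j)}s_i$; the rest is routine, which is why the statement follows almost immediately from the preceding discussion. As an alternative to building $\Theta$, one may argue by a spanning set: the relations allow one to push every $E_I$ to the left and collapse products of them, so $\tilde{A}$ is spanned by the elements $E_I s_\sigma$ for $I\in\mathcal{P}_d(n)$ and $\sigma\in\mathcal{B}_n$, which $\Psi$ sends bijectively onto the known basis $\{E_I\sigma\}$ of $\Bbbk[\mathcal{FB}_{d,n}]$, forcing injectivity.
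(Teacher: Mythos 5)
Your proposal is correct and is essentially the paper's argument made explicit: the paper declares the presentation ``immediate'' from the facts that the $E_I$ form a complete orthogonal family of idempotents with $t_iE_I=\zeta^{\pos_i(I)-1}E_I$, that $t_i$ is recoverable from them, and that $\{E_I\sigma\}$ is a basis, and your two mutually inverse homomorphisms (or the spanning-set alternative) are exactly the standard way to turn those facts into a proof. The only points of care, which you handle correctly, are reading the $s_i$ as invertible generators and reading the orthogonality relation as $E_IE_J=\delta_{I,J}E_I$.
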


\subsection{Action of $\mathfrak{S}_d$ on $\Bbbk[\mathcal{FB}_{d,n}]$}
\label{sec:action-Sd}

The above presentation of the algebra $\Bbbk[\mathcal{FB}_{d,n}]$ makes apparent an action by automorphisms of the symmetric group $\mathfrak{S}_d$. Indeed the symmetric group $\mathfrak{S}_d$ acts on the set $\mathcal{P}_d(n)$ by 
\[
  w\cdot (I_1,\ldots,I_d) = (I_{w^{-1}(1)},\ldots,I_{w^{-1}(d)})\ \ \ \text{ for } w\in \mathfrak{S}_d\text{ and }I\in \mathcal{P}_d(n).
\]
This action naturally endows $\Bbbk[\mathcal{FB}_{d,n}]$ with an action of $\mathfrak{S}_d$ by linearly extending
\begin{equation}
  \label{eq:action_Sd}
  w\cdot\left (E_I \sigma\right) = E_{w\cdot I} \sigma\,,\ \ \ \text{ for }w\in \mathfrak{S}_d\,,\ I\in \mathcal{P}_d(n)\text{ and } \sigma\in \mathcal{B}_n.
\end{equation}
From the presentation in \cref{other_presentation_FB}, it follows easily that this induces an action of $\mathfrak{S}_d$ on $\Bbbk[\mathcal{FB}_{d,n}]$ by automorphisms of algebras (the only argument needed is that the action of $\mathfrak{S}_d$ on $\mathcal{P}_d(n)$ permuting the subsets commutes with the action of $\mathfrak{S}_n$ permuting the letters $1,\dots,n$).

\subsection{Fixed points subalgebra $\left(\Bbbk[\mathcal{FB}_{d,n}]\right)^{\mathfrak{S}_d}$}

The set of unordered partitions of $\{1,\ldots,n\}$ in at most $d$ parts is then in bijection with the set of orbits of $\mathcal{P}_d(n)$ under the action of the symmetric group $\mathfrak{S}_d$:
\[
  \mathcal{P}_d(n)/\mathfrak{S}_d \simeq \{ \{I_1,\ldots,I_d\} \mid I_1\cup\cdots\cup I_d =\{1,\ldots,n\} \text{ and } I_a\cap I_b = \emptyset \text{ for } 1 \leq a \neq b \leq d\}.
\] 
Once again, we stress that some of the $I_a$ might be empty. We denote by $B_{d}(n)$ the cardinal of $\mathcal{P}_d(n)/\mathfrak{S}_d$. It is easy to see that:
\begin{equation}\label{Bdn}
  B_d(n)=|\mathcal{P}_d(n)/\mathfrak{S}_d|=\sum_{\substack{\lambda\vdash n \\ \ell(\lambda)\leq d}}\binom{n}{\lambda}\frac{1}{l_1!\dots l_n!}\ .
\end{equation}
where $l_i$ in the sum is the number of parts of $\lambda$ equal to $i$, namely $\lambda=(1^{l_1},2^{l_2},\dots,n^{l_n})$. For $d\geq n$, the number $B_d(n)$ does not depend on $d$ any more and is equal to the Bell number.

Given $I\in \mathcal{P}_d(n)$ we denote by $[I]$ its orbit in $\mathcal{P}_d(n)$ under the action of $\mathfrak{S}_d$. For such an orbit $[I]$, we then define an element $E_{[I]}\in \Bbbk[\mathcal{FB}_{d,n}]$ fixed under the action of $\mathfrak{S}_d$:
\[
  E_{[I]} = \sum_{J\in [I]} E_J.
\]
From the formula \eqref{eq:action_Sd} giving the action of $\mathfrak{S}_d$ on the group algebra of $\mathcal{FB}_{d,n}$, it is immediate that the set 
\[
  \{E_{[I]}\sigma \mid [I]\in \mathcal{P}_d(n)/\mathfrak{S}_d, \sigma\in \mathcal{B}_n\}
\]
is a $\Bbbk$-basis of $\left(\Bbbk[\mathcal{FB}_{d,n}]\right)^{\mathfrak{S}_d}$. 

Finally, recall that we have introduced for $1 \leq i ,j \leq d$ the elements
\[
  E_{i,j}=\frac{1}{d}\sum_{a=1}^dt_i^at_{j}^{-a}\ \ \ \ \ \text{and}\ \ \ \ \ E_i=E_{i,i+1}\ .
\]
Multiplying $E_{i,j}$ by $\sum_{I\in \mathcal{P}_n(d)}E_I$ (which is $1$), we obtain that:
\[
  E_{i,j}=\sum_{\substack{I\in \mathcal{P}_n(d)\\ \pos_i(I) = \pos_j(I)}}E_{I}
\]
Moreover, one also easily checks (as in \cite[Lemma 4.1]{JPA2}) that
\[
  E_{[I]} = \prod_{\substack{1 \leq i,j \leq n\\\pos_i(I) = \pos_j(I)}}E_{i,j} \prod_{\substack{1 \leq i,j \leq n\\\pos_i(I) \neq \pos_j(I)}}(1-E_{i,j}).
\]
A statement similar to \cite[Proposition 4.2]{JPA2} follows straightforwardly.

\begin{prop}\label{prop-fixedpoints}
  The subalgebra $\left(\Bbbk[\mathcal{FB}_{d,n}]\right)^{\mathfrak{S}_d}$ of $\Bbbk[\mathcal{FB}_{d,n}]$ is the subalgebra generated by $s_1,\ldots,s_{n-1}$ and $E_1,\ldots,E_{n-1}$.
\end{prop}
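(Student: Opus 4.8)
The plan is to prove the two inclusions separately. Write $\cA$ for the subalgebra of $\Bbbk[\mathcal{FB}_{d,n}]$ generated by $s_1,\ldots,s_{n-1}$ and $E_1,\ldots,E_{n-1}$, and write $F=\left(\Bbbk[\mathcal{FB}_{d,n}]\right)^{\mathfrak{S}_d}$ for the fixed-point subalgebra; the goal is to establish $\cA=F$.

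For the inclusion $\cA\subseteq F$ I would simply check that the generators of $\cA$ are fixed by $\mathfrak{S}_d$. Each $s_i$ lies in $\mathcal{B}_n$, and since $w\cdot\sigma = w\cdot\bigl(\sum_I E_I\sigma\bigr)=\sum_I E_{w\cdot I}\,\sigma=\sigma$ for $\sigma\in\mathcal{B}_n$ (the action merely permutes the orthogonal idempotents $E_I$), the braid generators are fixed. For $E_i=E_{i,i+1}$ I would use the expression $E_{i,j}=\sum_{\pos_i(I)=\pos_j(I)}E_I$ together with the equivariance $\pos_i(w\cdot I)=w(\pos_i(I))$, which follows directly from the definition of the $\mathfrak{S}_d$-action; this shows that $w$ permutes the index set $\{I\mid\pos_i(I)=\pos_j(I)\}$, so $E_{i,j}$, and in particular $E_i$, is fixed.

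For the reverse inclusion $F\subseteq\cA$ I would exploit the basis $\{E_{[I]}\sigma \mid [I]\in\mathcal{P}_d(n)/\mathfrak{S}_d,\ \sigma\in\mathcal{B}_n\}$ of $F$ exhibited above. Every $\sigma\in\mathcal{B}_n$ is a product of the $s_k^{\pm1}$ and hence lies in $\cA$, so it suffices to show that each $E_{[I]}$ lies in $\cA$. By the product formula
\[
  E_{[I]} = \prod_{\substack{1 \leq i,j \leq n\\\pos_i(I) = \pos_j(I)}}E_{i,j} \prod_{\substack{1 \leq i,j \leq n\\\pos_i(I) \neq \pos_j(I)}}(1-E_{i,j})
\]
this reduces to showing that every $E_{i,j}$ with $i\neq j$ belongs to $\cA$. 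The crucial ingredient is the relation $s_k E_{i,j}=E_{s_k(i),s_k(j)}s_k$ recalled in \cref{sec_Eij}, which rewrites as $E_{s_k(i),s_k(j)}=s_kE_{i,j}s_k^{-1}$. Since the transpositions $(k,k+1)$ generate $\mathfrak{S}_n$, which is transitive on pairs of distinct indices, any $E_{i,j}$ is obtained from $E_1=E_{1,2}$ by successive conjugations by the $s_k$, all of which remain inside $\cA$. Hence each $E_{i,j}\in\cA$, so each $E_{[I]}\in\cA$, and therefore $F\subseteq\cA$.

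I expect the whole argument to be routine once these two ingredients are in place. The only genuine computation is the equivariance $\pos_i(w\cdot I)=w(\pos_i(I))$ needed for the first inclusion; the conceptual point of the second inclusion is the observation that all off-diagonal idempotents $E_{i,j}$ are braid-conjugate to the generators $E_i$, so that no generators beyond $s_1,\ldots,s_{n-1}$ and $E_1,\ldots,E_{n-1}$ are required to reproduce the fixed-point basis. This is the analogue, in the present framed setting, of \cite[Proposition 4.2]{JPA2}.
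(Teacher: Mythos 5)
Your proof is correct and follows exactly the route the paper intends: the paper itself only remarks that the statement ``follows straightforwardly'' from the preceding material (the basis $\{E_{[I]}\sigma\}$ of the fixed points, the product formula for $E_{[I]}$ in terms of the $E_{i,j}$, and the conjugation relation $s_kE_{i,j}=E_{s_k(i),s_k(j)}s_k$), and these are precisely the ingredients you assemble, together with the easy check that the generators are $\mathfrak{S}_d$-fixed. The only implicit convention (shared with the paper) is that the subalgebra generated by the $s_i$ is understood to contain their inverses, which is needed to reach all of $\mathcal{B}_n$ and all conjugates $E_{i,j}$.
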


\subsection{The tied braid algebra}
\label{sec:tied-braid-alg}

The tied braid monoid has been introduced by Aicardi and Juyumaya in \cite{AiJ}. In this section, we relate the algebra of the tied braid monoid to the subalgebra of fixed points $\left(\Bbbk[\mathcal{FB}_{d,n}]\right)^{\mathfrak{S}_d}$. This is similar to \cite[Section 4]{JPA2}, where a  relationship is obtained between the algebra of braids and ties and the fixed points of the Yokonuma--Hecke algebra under the action of $\mathfrak{S}_d$.

Let us start by defining the tied braid algebra as the algebra of the tied braid monoid.

\begin{defn}
  The tied braid algebra $\TB_n$ on $n$ strands is the $\Bbbk$-algebra with generators $\tilde{s}_1,\ldots,\tilde{s}_{n-1}$ that we require to be invertible and satisfying the usual braid relations, together with additional generators $\tilde{E}_1,\ldots,\tilde{E}_{n-1}$ satisfying the relations
  \begin{align*}
    \tilde{E}_i\tilde{E}_j &= \tilde{E}_j \tilde{E}_i \text{ for all } 1 \leq i,j < n,  & \tilde{E}_i^2&=\tilde{E}_i \text{ for all } 1 \leq i < n,\\
    \tilde{s}_i\tilde{E}_i &= \tilde{E}_i\tilde{s}_i \text{ for all } 1 \leq i < n, & \tilde{s}_i\tilde{E}_j &= \tilde{E}_j \tilde{s}_i \text{ for } \lvert i-j\rvert >1,\\
    \tilde{E}_i\tilde{s}_j\tilde{s}_i &= \tilde{s}_j\tilde{s}_i\tilde{E}_j \text{ for } \lvert i-j\rvert = 1, & \tilde{E}_i\tilde{s}_j\tilde{s}_i^{-1} &= \tilde{s}_j\tilde{s}_i^{-1}\tilde{E}_j \text{ for } \lvert i-j\rvert = 1,\\
    \tilde{E}_i\tilde{E}_j\tilde{s}_i &= \tilde{E}_j\tilde{s}_i\tilde{E}_j = \tilde{s}_i\tilde{E}_j\tilde{E}_i\text{ for } \lvert i-j\rvert = 1.
  \end{align*}
\end{defn}

Arcis and Juyumaya showed \cite[Proposition 4.8]{ArJ} that the tied braid monoid is a semidirect product between the partition monoid and the braid group. As a consequence, one can describe a basis of the tied braid algebra. Since the elements $\tilde{s}_i$ for $1 \leq i \leq n$ satisfy the braid relations, we have a well defined element $\tilde{\sigma}\in \TB_n$ corresponding to an element $\sigma\in \mathcal{B}_n$.

For $1 \leq i < j \leq n$ we set
\[
  \tilde{E}_{i,j} = \tilde{s}_{j-1}\cdots \tilde{s}_{i+1} \tilde{E}_i \tilde{s}_{i+1}^{-1}\cdots \tilde{s}_{j-1}^{-1}
\]
and also $\tilde{E}_{i,i}=1$ for $i\in\{1,\ldots,n\}$. Note that $\tilde{E}_{i,i+1} = \tilde{E}_i$. Finally, a set partition of $\{1,\dots,n\}$ having at most $n$ non-empty parts can be identified with an unordered partition $[I] \in \mathcal{P}_{n}(n)/\mathfrak{S}_n$ and we set
\[
  \tilde{f}_{[I]} = \prod_{\substack{1 \leq i < j \leq n\\ \pos_i(I) = \pos_j(I)}}\tilde{E}_{i,j}. 
\]
This element is an idempotent of $\TB_n$ and the description by Arcis--Juyumaya \cite{ArJ} of the tied monoid as a semidirect product immediately shows that
\[
  \{ \tilde{f}_{[I]}\tilde{\sigma} \mid [I]\in\mathcal{P}_n(n), \sigma\in \mathcal{B}_n \}
\]
is a basis of $\TB_n$. Therefore, $\TB_n$ is free over $\Bbbk[\mathcal{B}_n]$ of rank the usual Bell number $B_n(n)$. Moreover, a triangular change of basis easily shows that the elements $\tilde{f}_{[I]}$ can be replaced by the elements $\tilde{E}_{[I]}$ where
\[
  \tilde{E}_{[I]} = \prod_{\substack{1 \leq i,j \leq n\\\pos_i(I) = \pos_j(I)}}\tilde{E}_{i,j} \prod_{\substack{1 \leq i,j \leq n\\\pos_i(I) \neq \pos_j(I)}}(1-\tilde{E}_{i,j})\ .
\]
Similarly to \cite[Corollary 4.5]{JPA2}, we have:

\begin{thm}\label{thm_BT_fixedpoints}
  The following map defines a surjective morphism of algebras:
  \[
    \begin{array}{rcl}
      \TB_n & \to & \Bbbk[\mathcal{FB}_{d,n}]^{\mathfrak{S}_d} \\[0.4em]
      \tilde{s}_i & \mapsto & s_i\\[0.2em]
      \tilde{E}_i & \mapsto & E_i
    \end{array}
  \]
  which is an isomorphism if and only if $d\geq n$.
\end{thm}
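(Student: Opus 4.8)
The plan is to first verify that the assignment $\tilde{s}_i\mapsto s_i$, $\tilde{E}_i\mapsto E_i$ respects all defining relations of $\TB_n$, then deduce surjectivity from \cref{prop-fixedpoints}, and finally compare explicit bases to pin down exactly when injectivity fails.

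\emph{Well-definedness.} The braid relations among the $\tilde{s}_i$ map to the braid relations holding in $\Bbbk[\mathcal{FB}_{d,n}]$, and the relations $\tilde{E}_i\tilde{E}_j=\tilde{E}_j\tilde{E}_i$, $\tilde{E}_i^2=\tilde{E}_i$, $\tilde{s}_i\tilde{E}_i=\tilde{E}_i\tilde{s}_i$, and $\tilde{s}_i\tilde{E}_j=\tilde{E}_j\tilde{s}_i$ for $\lvert i-j\rvert>1$ are immediate from the properties of the $E_{i,j}$ recorded in \cref{sec_Eij}: they are commuting idempotents and satisfy $s_kE_{i,j}=E_{s_k(i),s_k(j)}s_k$ (so $s_i E_i=E_i s_i$). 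The remaining relations are short computations using only this conjugation rule; for $j=i+1$ one pushes $s_{i+1}$ then $s_i$ past $E_i=E_{i,i+1}$ to obtain $E_is_{i+1}s_i=s_{i+1}E_{i,i+2}s_i=s_{i+1}s_iE_{i+1}$, giving the relation $\tilde{E}_i\tilde{s}_j\tilde{s}_i=\tilde{s}_j\tilde{s}_i\tilde{E}_j$, and the mixed relation $\tilde{E}_i\tilde{E}_j\tilde{s}_i=\tilde{E}_j\tilde{s}_i\tilde{E}_j=\tilde{s}_i\tilde{E}_j\tilde{E}_i$ follows once one notes that $E_iE_{i+1}$, $E_iE_{i+2}$ and $E_{i+1}E_{i+2}$ all coincide with the projector onto the configurations in which positions $i,i+1,i+2$ carry equal colours, which commutes with $s_i$. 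The relations involving $\tilde{s}_i^{-1}$ are handled identically.

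\emph{Surjectivity.} By \cref{prop-fixedpoints}, the algebra $\Bbbk[\mathcal{FB}_{d,n}]^{\mathfrak{S}_d}$ is generated by $s_1,\dots,s_{n-1}$ and $E_1,\dots,E_{n-1}$, which are precisely the images of the generators $\tilde{s}_i$, $\tilde{E}_i$; hence the morphism is onto.

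\emph{The isomorphism criterion.} Iterating the conjugation rule shows $\tilde{E}_{i,j}\mapsto E_{i,j}$, so the idempotent $\tilde{E}_{[I]}$ is sent to $E_{[I]}=\prod_{\pos_i(I)=\pos_j(I)}E_{i,j}\prod_{\pos_i(I)\neq\pos_j(I)}(1-E_{i,j})$; expanding in the minimal idempotents $E_K$ of $\Bbbk[(\mathbb{Z}/d\mathbb{Z})^n]$ yields $E_{[I]}=\sum_K E_K$, the sum over ordered $d$-partitions $K\in\mathcal{P}_d(n)$ inducing the same set partition as $I$. Thus the morphism carries the basis $\{\tilde{E}_{[I]}\tilde{\sigma}\}$ of $\TB_n$, indexed by set partitions $[I]$ of $\{1,\dots,n\}$ and $\sigma\in\mathcal{B}_n$, onto the family $\{E_{[I]}\sigma\}$: if $[I]$ has at most $d$ blocks this is a basis element of the target, while if $[I]$ has more than $d$ blocks no ordered $d$-partition induces it and $E_{[I]}=0$. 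When $d\geq n$, every set partition of $\{1,\dots,n\}$ has at most $n\leq d$ blocks, so the basis of $\TB_n$ maps bijectively onto the basis $\{E_{[I]}\sigma\}_{[I]\in\mathcal{P}_d(n)/\mathfrak{S}_d}$ of $\Bbbk[\mathcal{FB}_{d,n}]^{\mathfrak{S}_d}$, and the morphism is an isomorphism. When $d<n$, the partition of $\{1,\dots,n\}$ into $n$ singletons has $n>d$ blocks, so the corresponding nonzero basis elements $\tilde{E}_{[I]}\tilde{\sigma}$ lie in the kernel and the morphism is not injective. I expect the main obstacle to be the bookkeeping in this last step: identifying the image of $\tilde{E}_{[I]}$ and proving it vanishes exactly when $[I]$ has more than $d$ blocks, which is what links the two bases as free $\Bbbk[\mathcal{B}_n]$-modules and produces the sharp threshold $d\geq n$; the relation-checking, though tedious, is routine given the conjugation formula from \cref{sec_Eij}.
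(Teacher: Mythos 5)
Your proof is correct and follows essentially the same route as the paper: verify the relations via the conjugation rule $s_kE_{i,j}=E_{s_k(i),s_k(j)}s_k$, get surjectivity from \cref{prop-fixedpoints}, and compare the bases $\{\tilde{E}_{[I]}\tilde{\sigma}\}$ and $\{E_{[I]}\sigma\}$ indexed by set partitions with at most $n$ (resp.\ $d$) blocks. Your treatment of the case $d<n$ is in fact slightly sharper than the paper's basis-counting, since you exhibit explicit kernel elements by showing $E_{[I]}=0$ whenever $[I]$ has more than $d$ blocks.
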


\begin{proof}
  The assertion that the assignment $\tilde{s}_i \mapsto s_i$ and $\tilde{E}_i \mapsto E_i$ defines a morphism of algebras boils down to a simple calculation using the relations in $\Bbbk[\mathcal{FB}_{d,n}]$. The surjectivity follows from \cref{prop-fixedpoints}. If $d\geq n$, en element $[I] \in \mathcal{P}_{d}(n)/\mathfrak{S}_d$ can be identified with an unordered partition in $\mathcal{P}_{n}(n)/\mathfrak{S}_n$ (by removing some empty subsets in $[I]$). In this case, one has immediately that a basis of $\TB_n$ is sent to a basis of $\Bbbk[\mathcal{FB}_{d,n}]^{\mathfrak{S}_d}$, and the morphism therefore becomes an isomorphism.

  If $d<n$, elements $[I] \in \mathcal{P}_{d}(n)/\mathfrak{S}_d$ are identified with a strictly smaller subset of $\mathcal{P}_{n}(n)/\mathfrak{S}_n$ (the set partitions with at most $d$ non-empty parts). In this case, a strictly smaller subset of the basis of $\TB_n$ is sent onto the basis of $\Bbbk[\mathcal{FB}_{d,n}]^{\mathfrak{S}_d}$ and the morphism cannot be injective.
\end{proof}

\section{Tied braid algebra and centralizers of tensor products}
\label{sec:tied-braid_centralizers}

We now study the action of the tied braid algebra on centralizers, in the spirit of \cref{sec:dble_grp}. We start from the construction of \cref{sec:framedbraid}, with the additional assumption that all bialgebras $A_1,\dots,A_d$ are equal to one and the same $\Bbbk$-bialgebra $A^{(0)}$, and all modules $V_1,\dots,V_d$ are equal to one and the same $A^{(0)}$-module $V^{(0)}$: 
\[
  A=A^{(0)}\boxtimes \cdots \boxtimes A^{(0)}\ \ \ \ \text{and}\ \ \ \ V_1=\dots=V_d=V^{(0)}\ .
\]
So we have an algebra morphism from the braid group algebra to the centralizer of $A^{(0)}$ on $(V^{(0)})^{\otimes n}$ (for any $n\geq 0$):
\begin{equation}\label{braid_in_tA}
  \phi^{(0)}\ \ \ :\ \ \  \Bbbk\mathcal{B}_n \to \End_{A^{(0)}}\bigl((V^{(0)})^{\otimes n}\bigr)\ .
\end{equation}
Note that in this section this single morphism plays the role of the morphisms $\phi_b$, for all $b=1,\dots,d$. This allowed us to define, in \cref{thm:action-framed}, a morphism of algebras 
\begin{equation}\label{braid_in_A_bis}
  \Phi\ \ \ :\ \ \  \Bbbk[\mathcal{FB}_{d,n}] \rightarrow \End_{A}(V^{\otimes n})\ .
\end{equation}
This morphism was given explicitly on the generators $t_i,s_i$ of $\mathcal{FB}_{d,n}$. Below we will also give it on the new generators $E_I$, defined in \eqref{def_EI}, of $\Bbbk[\mathcal{FB}_{d,n}]$.

\subsection{Centralizers of semi-direct product.}

As explained in \cref{sec:alg_prel}, we have the natural action of $\mathfrak{S}_d$ on $A$ (permuting the factors) and the corresponding algebra $A\rtimes \mathfrak{S}_d$ acting on $V^{\otimes n}$. Moreover, we recall that:
\[
  \End_{A\rtimes \mathfrak{S}_d}(V^{\otimes n})= \End_A(V^{\otimes n})^{\mathfrak{S}_d}\ .
\]
The action of $\mathfrak{S}_d$ on the centralizer $\End_A(V^{\otimes n})$ is simply by conjugating with the action of $\mathfrak{S}_d$ on $V^{\otimes n}$. We recall for convenience of the reader that we have the following decomposition of $V^{\otimes n}$:
\begin{equation}\label{dec-Votimesn}
  V^{\otimes n}=\bigoplus_{a_1,\dots,a_n=1}^dV_{a_1}\otimes\dots\otimes V_{a_n}\ .
\end{equation}
and that an element $\sigma\in\mathfrak{S}_d$ acts by permuting the summands as
\begin{equation}\label{act_Sd}
  \sigma\ :\ V_{a_1}\otimes\dots\otimes V_{a_n}\ \to\ V_{\sigma(a_1)}\otimes\dots\otimes V_{\sigma(a_n)}\ .
\end{equation}
Note that this is different from the action of $\mathfrak{S}_n$ by permuting the factors.

\subsection{Action of the tied braid algebra.}
\label{sec:action-tied-centralizers}

The image of the element $E_I$ under the morphism $\Phi$ in \eqref{braid_in_A_bis} is easily seen to be the projector onto one summand of the decomposition \eqref{dec-Votimesn} of $V^{\otimes n}$:
\begin{equation}\label{act:EI}
  \Phi(E_I)\ :\ V^{\otimes n}\ \to\ V_{\pos_1(I)}\otimes \dots\otimes V_{\pos_n(I)}\ .
\end{equation}
Indeed, recall that $E_I$ is the idempotent associated to the irreducible representation of $(\mathbb{Z}/d\mathbb{Z})^n$ corresponding  to $t_i\mapsto \zeta^{\pos_i(I)-1}$. In view of the action of $\Phi(t_i)$ in \cref{thm:action-framed}, the above description of $\Phi(E_I)$ follows immediately.

We are ready to state the general result relating the tied braid algebra with centralizers of tensor products. Note that the images by $\Phi$ of the elements $s_i$ and $E_i$ of the framed braid group algebra were described explicitly in \cref{sec:framedbraid}.
\begin{thm}
  \label{thm:BT_centralizer}
  For any $d\geq 1$, we have a morphism of algebras
  \begin{equation}
    \Psi\ :\
    \begin{array}{rcl}
      \TB_n & \rightarrow &  \End_{A\rtimes \mathfrak{S}_d}(V^{\otimes n})\\[0.4em]
      \tilde{s}_i & \mapsto & \Phi(s_i)\ \ \ \ (i=1,\dots,n-1)\,,\\[0.4em]
      \tilde{E}_i & \mapsto & \Phi(E_i)\ \ \ \ (i=1,\dots,n-1)\,.
    \end{array}\label{eq:mor_tb}
  \end{equation}
  This morphism is surjective if the morphism $\Phi$ in \eqref{braid_in_A_bis} is surjective.
\end{thm}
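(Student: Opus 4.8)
The plan is to avoid checking the defining relations of $\TB_n$ directly and instead realize $\Psi$ as a composite of two morphisms already at our disposal. Indeed, \cref{thm_BT_fixedpoints} gives a surjective morphism $\iota\colon\TB_n\to\Bbbk[\mathcal{FB}_{d,n}]^{\mathfrak{S}_d}$ with $\tilde{s}_i\mapsto s_i$ and $\tilde{E}_i\mapsto E_i$, while \cref{thm:action-framed} gives $\Phi\colon\Bbbk[\mathcal{FB}_{d,n}]\to\End_A(V^{\otimes n})$. So the first thing I would do is show that the restriction of $\Phi$ to the fixed-point subalgebra $\Bbbk[\mathcal{FB}_{d,n}]^{\mathfrak{S}_d}$ takes values in $\End_A(V^{\otimes n})^{\mathfrak{S}_d}$, which by \cref{lem:centralizer_semi}(i) is exactly $\End_{A\rtimes\mathfrak{S}_d}(V^{\otimes n})$. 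Granting this, the composite $\Phi\circ\iota$ is automatically a well-defined algebra morphism with the values on $\tilde{s}_i$ and $\tilde{E}_i$ prescribed in \eqref{eq:mor_tb}, so it is the desired $\Psi$.

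The heart of the argument is therefore to show that $\Phi$ intertwines the combinatorial $\mathfrak{S}_d$-action on $\Bbbk[\mathcal{FB}_{d,n}]$ from \cref{sec:action-Sd} with the conjugation action on $\End_A(V^{\otimes n})$ from \cref{sec:dble_grp}. By \cref{other_presentation_FB} it suffices to verify equivariance on the generators $E_I$ and $s_i$. For $E_I$, I would use the description \eqref{act:EI} of $\Phi(E_I)$ as the projector onto $V_{\pos_1(I)}\otimes\dots\otimes V_{\pos_n(I)}$, together with the elementary identity $\pos_i(w\cdot I)=w(\pos_i(I))$, and compare it with the way conjugation by $w$ relabels the summands via \eqref{act_Sd}; both yield the projector onto the summand with indices $(w(\pos_1(I)),\dots,w(\pos_n(I)))$, so they agree. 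For $s_i$, the element $s_i$ is already $\mathfrak{S}_d$-fixed in $\Bbbk[\mathcal{FB}_{d,n}]$, so it remains only to see that conjugating $\sigma_i=\Phi(s_i)$ by the permutation of summands returns $\sigma_i$.

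I expect this last point to be the main subtlety. Invariance of $\sigma_i$ relies crucially on the standing hypothesis of this section that all the $A_b$ and all the $V_b$ coincide with a single $A^{(0)}$ and $V^{(0)}$, so that all the operators $\check{R}^{(b)}$ are literally one and the same map $\check{R}^{(0)}$. Conjugation by $w$ carries the block of $\sigma_i$ acting as $\check{R}^{(b)}$ on $V_b\otimes V_b$ to the block on $V_{w(b)}\otimes V_{w(b)}$, and since all these blocks are equal this recovers $\sigma_i$, while the flip blocks on $V_b\otimes V_c$ with $b\neq c$ are manifestly permuted among themselves. In fact, for the mere well-definedness of $\Psi$ one can bypass full equivariance altogether: by \cref{prop-fixedpoints} the subalgebra $\Bbbk[\mathcal{FB}_{d,n}]^{\mathfrak{S}_d}$ is generated by $s_1,\dots,s_{n-1}$ and $E_1,\dots,E_{n-1}$, their images $\sigma_i$ and $\varepsilon_i$ are $\mathfrak{S}_d$-invariant by the same computation, and the invariants form a subalgebra, so $\Phi$ maps $\Bbbk[\mathcal{FB}_{d,n}]^{\mathfrak{S}_d}$ into $\End_{A\rtimes\mathfrak{S}_d}(V^{\otimes n})$.

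Finally, for surjectivity I would invoke \cref{lem:centralizer_semi}(ii) with $X=\Bbbk[\mathcal{FB}_{d,n}]$ and $G=\mathfrak{S}_d$: the equivariance established above, together with the assumed surjectivity of $\Phi$ and the invertibility of $\lvert\mathfrak{S}_d\rvert=d!$ in $\Bbbk$ (which holds in the characteristic-zero setting of the applications), shows that $\Phi$ restricts to a surjection $\Bbbk[\mathcal{FB}_{d,n}]^{\mathfrak{S}_d}\to\End_{A\rtimes\mathfrak{S}_d}(V^{\otimes n})$. Since $\iota$ is already surjective by \cref{thm_BT_fixedpoints}, the composite $\Psi$ is then surjective as well, which completes the proof.
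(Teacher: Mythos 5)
Your proposal is correct and follows essentially the same route as the paper: factor $\Psi$ through the surjection $\TB_n\to\Bbbk[\mathcal{FB}_{d,n}]^{\mathfrak{S}_d}$ of \cref{thm_BT_fixedpoints}, check $\mathfrak{S}_d$-equivariance of $\Phi$ on the generators $E_I$ (via $\pos_j(w\cdot I)=w(\pos_j(I))$) and $s_i$ (using that all $A_b$, $V_b$ coincide), and conclude surjectivity from \cref{lem:centralizer_semi}(ii). The only additions beyond the paper's argument are the explicit remark on the invertibility of $d!$ and the observation that \cref{prop-fixedpoints} lets one bypass full equivariance for mere well-definedness; both are accurate but do not change the substance.
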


\begin{proof}
  We start by checking that the morphism $\Phi$ in \eqref{braid_in_A_bis} is $\mathfrak{S}_d$-equivariant and thus induces an algebra morphism
  \begin{equation}\label{eq:mor_fixedpoints}
    \Bbbk[\mathcal{FB}_{d,n}]^{\mathfrak{S}_d}\rightarrow \End_{A}(V^{\otimes n})^{\mathfrak{S}_d}\ .
  \end{equation}
  The equivariance of the morphism $\Phi$ on the generators $s_i$ amounts to the fact that the action of $\mathfrak{S}_d$ on $V^{\otimes n}$ commutes with the image $\Phi(s_i)$. This follows easily from the description of $\Phi(s_i)=\sigma_i$ in \cref{sec:framedbraid}. Indeed either $\Phi(s_i)$ sends a summand $\dots V_{a_i}\otimes V_{a_{i+1}}\dots$ to $\dots V_{a_{i+1}}\otimes V_{a_i}\dots$, in which case it commutes with the action \eqref{act_Sd}, or it acts inside a given summand $\dots V_{a_i}\otimes V_{a_{i+1}}\dots$ (if $a_i=a_{i+1}$) in which case its action does not depend on the value of $a_i$, since all algebras $A_b$ are the same, and it also commutes with the action \eqref{act_Sd}.

  For the equivariance of the morphism $\Phi$ on the generators $E_I$, we have that $\Phi(\sigma\cdot E_I)=\Phi(E_{\sigma\cdot I})$ acts as the projector on the summand $V_{\pos_1(\sigma\cdot I)}\otimes \dots\otimes V_{\pos_n(\sigma\cdot I)}$. Since $\pos_j(\sigma\cdot I)=\sigma(\pos_j(I))$, it is equal to the conjugate by the action of $\sigma$ in \eqref{act_Sd} of the projector on $V_{\pos_1(I)}\otimes \dots\otimes V_{\pos_n(I)}$. This concludes the proof of the equivariance.

  Composing the morphism in \eqref{eq:mor_fixedpoints} with the one obtained in \cref{thm_BT_fixedpoints} sending $\TB_n$ to $\Bbbk[\mathcal{FB}_{d,n}]^{\mathfrak{S}_d}$, we get the morphism $\Psi$
  \[
    \Psi\ :\ \TB_n\rightarrow \End_{A}(V^{\otimes n})^{\mathfrak{S}_d}=\End_{A\rtimes \mathfrak{S}_d}(V^{\otimes n})\ ,
  \]
  the last equality being in \cref{lem:centralizer_semi}. The statement about the surjectivity of the morphism follows from the general fact in \cref{lem:centralizer_semi}.
\end{proof}

\subsection{Examples of algebras of braid and ties}

In this section, we take an indeterminate $q$ and the field $\Bbbk=\mathbb{C}(q)$.

\subsubsection{The Hecke algebra of braids and ties}

We take $A^{(0)} = U_q(\mathfrak{gl}_N)$ and $V^{(0)}$ the vector representation of $U_q(\mathfrak{gl}_N)$ of dimension $N$. Then the algebra $A$ is isomorphic to $U_q(\mathfrak{gl}_N^{\oplus d})$ and the representation $V$ is the vector representation of dimension $dN$.

The Hecke algebra of braids and ties $\BT^{\Hec}_n$ is the quotient of the tied braid algebra $\TB_n$ by the additional quadratic relation  of the Yokonuma--Hecke algebra
\[
  \tilde{s}_i^2 = (q-q^{-1})\tilde{s}_i\tilde{E}_i + 1.
\]
This algebra was defined in \cite{AJ-BT} but we have slightly modified the name.

\begin{thm}\label{thm:BT-H}
  We have a surjective morphism
  \[
    \Psi\ :\ \BT^{\Hec}_n \rightarrow \End_{U_q(\mathfrak{gl}_N^{\oplus d})\rtimes \mathfrak{S}_d}(V^{\otimes n})\ . 
  \]
  This is an isomorphism if and only if $n\leq d$ and $n\leq N$.
\end{thm}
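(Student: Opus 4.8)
The plan is to produce $\Psi$ from \cref{thm:BT_centralizer}, show that it descends to $\BT^{\Hec}_n$, and then settle the isomorphism question by comparing dimensions. \cref{thm:BT_centralizer} already supplies a surjective morphism $\TB_n\to\End_{A\rtimes\mathfrak{S}_d}(V^{\otimes n})$ with $\tilde{s}_i\mapsto\Phi(s_i)$ and $\tilde{E}_i\mapsto\Phi(E_i)$, the surjectivity coming from that of $\Phi$ (\cref{thm_YH}). First I would check that this morphism kills the extra relation of $\BT^{\Hec}_n$: its value on $\tilde{s}_i^2-(q-q^{-1})\tilde{s}_i\tilde{E}_i-1$ is $\Phi\bigl(s_i^2-(q-q^{-1})s_iE_i-1\bigr)$ (using that $\Phi$ is an algebra map and $s_iE_i=E_is_i$), which vanishes since the Yokonuma--Hecke relation \eqref{YHecke_relation} holds in $\End_A(V^{\otimes n})$, hence in its fixed-point subalgebra, by \cref{thm_YH}. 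Therefore $\Psi$ factors through $\BT^{\Hec}_n$ and stays surjective.

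Since $\Psi$ is surjective, it is an isomorphism precisely when $\dim_{\Bbbk}\BT^{\Hec}_n=\dim_{\Bbbk}\End_{A\rtimes\mathfrak{S}_d}(V^{\otimes n})$, so everything reduces to a dimension count. For the target, the hypothesis $\Hom_{U_q(\mathfrak{gl}_N)}(V^{\otimes r},V^{\otimes s})=0$ for $r\neq s$ holds for the vector representation, so the fixed-point description \eqref{iso:matrix-fixedpoints} applies and its total dimension is given by \eqref{dim:matrix-fixedpoints}:
\[
  \dim\End_{A\rtimes\mathfrak{S}_d}(V^{\otimes n})=\sum_{\substack{\lambda\vdash n\\ \ell(\lambda)\leq d}}\binom{n}{\lambda}^2\frac{d_{\lambda_1}\cdots d_{\lambda_d}}{l_1!\cdots l_n!}\,,\qquad d_k=\dim\End_{U_q(\mathfrak{gl}_N)}(V^{\otimes k})\,.
\]
By quantum Schur--Weyl duality (\cref{thm_SW_Hecke}) one has $d_k=k!=\dim\Hec_k$ when $k\leq N$, while $d_k<k!$ as soon as $k>N$, the Hecke surjection then having a nonzero kernel generated by the $q$-antisymmetrizer on $N+1$ points.

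Next I would pin down $\dim\BT^{\Hec}_n$, which does not depend on $d$ or $N$. From above it is enough to bound it by $n!\,B_n(n)$, where $B_n(n)$ is the $n$-th Bell number of \eqref{Bdn}. Using the basis $\{\tilde{f}_{[I]}\tilde{\sigma}\}$ of $\TB_n$ (free of rank $B_n(n)$ over $\Bbbk[\mathcal{B}_n]$), the quadratic relation rewrites any positive power of $\tilde{s}_i$ as a combination of shorter braid words with coefficients involving $\tilde{E}_i$; since right multiplication of $\tilde{f}_{[I]}$ by such an $\tilde{E}_i$ only coarsens the underlying set partition, the span of $\{\tilde{f}_{[I]}\tilde{s}_w\mid [I]\in\mathcal{P}_n(n)/\mathfrak{S}_n,\ w\in\mathfrak{S}_n\}$ is already a subalgebra, whence $\dim\BT^{\Hec}_n\leq n!\,B_n(n)$. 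On the other hand the elementary set-partition count
\[
  \sum_{\lambda\vdash n}\binom{n}{\lambda}^2\frac{\lambda_1!\cdots\lambda_d!}{l_1!\cdots l_n!}
  =n!\sum_{\lambda\vdash n}\frac{n!}{\lambda_1!\cdots\lambda_d!\,l_1!\cdots l_n!}
  =n!\,B_n(n)
\]
shows that for any $d\geq n$ and $N\geq n$ the target has dimension exactly $n!\,B_n(n)$; combined with the surjectivity of $\Psi$ this forces $\dim\BT^{\Hec}_n\geq n!\,B_n(n)$, hence $\dim\BT^{\Hec}_n=n!\,B_n(n)$.

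It then remains to compare the two sides for arbitrary $d,N$. Bounding $d_{\lambda_i}\leq\lambda_i!$ termwise and then dropping the constraint $\ell(\lambda)\leq d$ gives $\dim\End_{A\rtimes\mathfrak{S}_d}(V^{\otimes n})\leq n!\,B_n(n)=\dim\BT^{\Hec}_n$, with equality exactly when neither estimate is strict. If $n>N$, the part $\lambda=(n)$ (always admissible, $\ell=1\leq d$) contributes $d_n<n!$ in place of $n!$, making the first estimate strict; if $n>d$, the partition $(1^n)$ has length $n>d$ and is absent from the sum while contributing $n!>0$ to the full sum, making the second strict. In either case $\dim\End_{A\rtimes\mathfrak{S}_d}(V^{\otimes n})<\dim\BT^{\Hec}_n$ and $\Psi$ is not injective, whereas for $n\leq d$ and $n\leq N$ both estimates are equalities and $\Psi$ is an isomorphism. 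I expect the main obstacle to be precisely the upper bound $\dim\BT^{\Hec}_n\leq n!\,B_n(n)$: one must verify carefully that the proliferation of idempotents $\tilde{E}_i$ under the twisted quadratic relation keeps the proposed spanning set closed under multiplication. A conceptually cleaner variant, which packages the same two constraints, is to factor $\Psi$ as $\BT^{\Hec}_n\to\YH_{d,n}^{\mathfrak{S}_d}\to\End_{A\rtimes\mathfrak{S}_d}(V^{\otimes n})$, the first map being a Hecke-level analogue of \cref{thm_BT_fixedpoints} (an isomorphism iff $d\geq n$) and the second the restriction of $\Phi$ to $\mathfrak{S}_d$-fixed points via \cref{lem:centralizer_semi} (an isomorphism iff $n\leq N$).
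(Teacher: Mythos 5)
Your proof is correct, and the surjectivity half follows the paper exactly (pull back $\Psi$ from \cref{thm:BT_centralizer} and check the quadratic relation via \cref{thm_YH}). For the isomorphism criterion, however, you take a genuinely different route: the paper simply forgets the module structure and invokes the faithfulness results of Ryom-Hansen \cite{RH} for the tensor-space representation of the algebra of braids and ties, whereas you run an internal dimension count, comparing $\dim\BT^{\Hec}_n$ with the explicit formula \eqref{dim:matrix-fixedpoints} for $\dim\End_{A\rtimes\mathfrak{S}_d}(V^{\otimes n})$ and locating the strict inequalities at $\lambda=(n)$ (when $n>N$) and $\lambda=(1^n)$ (when $n>d$). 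The one input you must supply is the upper bound $\dim\BT^{\Hec}_n\leq n!\,B(n)$, which you rightly flag; it does go through by the rewriting you sketch (the span of the $\tilde{f}_{[I]}\tilde{s}_w$ contains $1$ and is stable under right multiplication by $\tilde{s}_i^{\pm 1}$ and $\tilde{E}_i$, using $\tilde{s}_w\tilde{E}_i=\tilde{E}_{w(i),w(i+1)}\tilde{s}_w$ and the quadratic relation), and it is only the easy spanning half of the basis theorem for $\BT^{\Hec}_n$, not the faithfulness statement the paper leans on. The trade-off: the paper's proof is two lines but outsources the hard direction to \cite{RH}; yours is longer but essentially self-contained given the paper's \cref{sec:dble_grp}, and it recovers $\dim\BT^{\Hec}_n=n!\,B(n)$ as a byproduct of the argument rather than as a corollary of the theorem.
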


\begin{proof}
  We already have a surjective morphism $\TB_n \rightarrow \End_{U_q(\mathfrak{gl}_n^{\oplus d})\rtimes \mathfrak{S}_d}(V^{\otimes n})$ thanks to \cref{thm:BT_centralizer}. It then suffices to recall that the quadratic relation is satisfied in the centralizer as shown in \cref{thm_YH}.

  Then we have to show that $\Psi$ is injective if and only if $n\leq d$ and $n\leq N$. One can forget about the $U_q(\mathfrak{gl}_N^{\oplus d})\rtimes \mathfrak{S}_d$-module structure and consider only the resulting map to $\End_{\Bbbk}(V^{\otimes n})$. Then the injectivity follows from the results of \cite{RH}.
\end{proof}

Here we take $d=n$ and we use the above theorem for $N\geq n$. Applying the general results from \cref{sec:dble_grp}, we deduce the following isomorphism
\begin{equation}
  \BT^{\Hec}_n\cong \bigoplus_{\lambda\vdash n} \Mat_{\binom{n}{\lambda}/l_1!\cdots l_n!}\left(\mathcal{C}_{\mathfrak{S}_{l_1}\times\dots\times \mathfrak{S}_{l_n}}\left(\Hec_{\lambda_1}\otimes\cdots\otimes \Hec_{\lambda_n}\right)\right),
\end{equation}
where we recall that a partition $\lambda$ in the sum is written as $(1^{l_1},2^{l_2},\dots,n^{l_n})$, that is, $l_i$ is the number of $i$ occurring in $\lambda$. The algebras $\Hec_{\lambda_i}$ are usual Hecke algebras. We recover, with a slightly different formulation, an isomorphism theorem proved in \cite{ER}.

Specifying the setting of \cref{sec:consequencesfixedpoints}, we find an indexation of irreducible representations over $\mathbb{C}(q)$ of $\BT^{\Hec}_n$ by
\[
  (\lambda,\rho_1,\dots,\rho_n,\Lambda_1,\dots,\Lambda_n)\,,\ \ \ \text{where}\ \lambda\vdash n\,,\ \ \rho_i\vdash \lambda_i\,,\ \ \Lambda_i \vdash l_i\,,
\]
where we have identified the irreducible representations of the symmetric group and of the Hecke algebra with partitions. The dimension of this representation is:
\[
  \binom{n}{\lambda}\frac{\prod_{i=1}^n\dim\rho_i\dim\Lambda_i}{l_1!\dots l_n!}\ .
\]
We recover the description of the semisimple representation theory of $\BT^{\Hec}_n$, see \cite{JPA2,RH11}.

Note that the general formula for the total dimension of the direct sum of matrix algebras simplifies nicely and we find:
\[
  \dim\BT^{\Hec}_n=\sum_{\lambda\vdash n}\binom{n}{\lambda}^2\frac{\lambda_1!\dots\lambda_n!}{l_1!\dots l_n!}=n!\sum_{\lambda\vdash n}\binom{n}{\lambda}\frac{1}{l_1!\dots l_n!}=n!B(n)\ .
\]
where $B(n)$ is the Bell number. The sequence $n!B(n)$ starting from $n=0$ with $1,1,4,30,360$ is the sequence A137341 on \cite{OEIS}.

\subsubsection{The Temperley--Lieb algebra of braids and ties.}

We keep the same setting and we impose moreover that $N=2$. In view of \cref{sec:framed-TL}, it is natural to define the Temperley--Lieb algebra of braids and ties $\BT^{\TL}_n$ as the quotient of the Hecke algebra of braids and ties $\BT^{\Hec}_n$ by the additional relation:
\[
  \tilde{E}_1\tilde{E}_2(1-q^{-1}\tilde{s}_1-q^{-1}\tilde{s}_2+q^{-2}\tilde{s}_1\tilde{s}_2+q^{-2}\tilde{s}_2\tilde{s}_1-q^{-3}\tilde{s}_1\tilde{s}_2\tilde{s}_1)=0\ .
\]

\begin{thm}\label{thm:BT-TL}
  We have a surjective morphism
  \[
    \Psi\ :\ \BT^{\TL}_n \rightarrow \End_{U_q(\mathfrak{gl}_2^{\oplus d})\rtimes \mathfrak{S}_d}(V^{\otimes n})\ . 
  \]
  This is an isomorphism if and only if $n\leq d$.
\end{thm}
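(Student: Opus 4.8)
The plan is to follow the template of \cref{thm_FTL} and \cref{thm:BT-H}: produce the morphism and its surjectivity essentially for free, then settle injectivity by a dimension count. For the existence, I would start from the surjective morphism $\Psi\colon\TB_n\to\End_{A\rtimes\mathfrak{S}_d}(V^{\otimes n})$ supplied by \cref{thm:BT_centralizer}; its hypothesis (surjectivity of $\Phi$) holds here because $A^{(0)}=U_q(\mathfrak{gl}_2)$ with its vector representation satisfies the assumptions of \cref{cor:action-framed}. Since $\Psi(\tilde{E}_i)=\Phi(E_i)=\varepsilon_i$ and $\Psi(\tilde{s}_i)=\Phi(s_i)=\sigma_i$, the defining relation of $\BT^{\TL}_n$ is sent to
\[
\varepsilon_1\varepsilon_2\bigl(1-q^{-1}\sigma_1-q^{-1}\sigma_2+q^{-2}\sigma_1\sigma_2+q^{-2}\sigma_2\sigma_1-q^{-3}\sigma_1\sigma_2\sigma_1\bigr),
\]
which is precisely the operator shown to vanish in the proof of \cref{thm_FTL} (relation \eqref{rel-FTL} read through $\Phi$). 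As $\End_{A\rtimes\mathfrak{S}_d}(V^{\otimes n})$ is a subalgebra of $\End_A(V^{\otimes n})$, the relation holds there as well, so $\Psi$ factors through $\BT^{\TL}_n$ and remains surjective.

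It then remains to prove that $\Psi$ is injective if and only if $n\le d$; as $\Psi$ is surjective this reduces to matching dimensions. First I would compute the target. Applying the circulant-matrix isomorphism \eqref{iso:matrix-fixedpoints} with $A^{(0)}=U_q(\mathfrak{gl}_2)$, for which $B^{(0)}_k=\End_{U_q(\mathfrak{gl}_2)}((V^{(0)})^{\otimes k})\cong\TL_k$ for \emph{all} $k$ and for which the vanishing hypothesis $\Hom_{A^{(0)}}((V^{(0)})^{\otimes r},(V^{(0)})^{\otimes s})=0$ for $r\neq s$ holds by the determinant (central-character) grading of $\mathfrak{gl}_2$ on $V^{\otimes r}$, the dimension formula \eqref{dim:matrix-fixedpoints} gives
\[
\dim\End_{A\rtimes\mathfrak{S}_d}(V^{\otimes n})=\sum_{\substack{\lambda\vdash n\\ \ell(\lambda)\le d}}\binom{n}{\lambda}^2\frac{C_{\lambda_1}\cdots C_{\lambda_d}}{l_1!\cdots l_n!},
\]
where $C_k=\dim\TL_k$ is the $k$-th Catalan number and $\lambda=(1^{l_1}\cdots n^{l_n})$.

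The heart of the argument is the $d$-independent computation $\dim\BT^{\TL}_n=\sum_{\lambda\vdash n}\binom{n}{\lambda}^2\,C_{\lambda_1}\cdots C_{\lambda_n}/(l_1!\cdots l_n!)$, i.e. the same sum but with no restriction on $\ell(\lambda)$. The inequality $\ge$ is free: for any $d\ge n$ the constraint $\ell(\lambda)\le d$ is vacuous, so the surjection $\Psi$ forces the (intrinsic) dimension $\dim\BT^{\TL}_n$ to be at least this full sum. For the reverse inequality I would use that $\BT^{\TL}_n$ is the quotient of $\BT^{\Hec}_n$ by the Temperley--Lieb relator $r=\tilde{E}_1\tilde{E}_2\,\tilde\Lambda_3$ (with $\tilde\Lambda_3$ the $q$-antisymmetrizer \eqref{qAnti_3} written in the $\tilde s_i$), together with the intrinsic decomposition of $\BT^{\Hec}_n$ coming from \cref{thm:BT-H} at $d=n$, $N\ge n$, namely $\BT^{\Hec}_n\cong\bigoplus_{\lambda\vdash n}\Mat_{\binom{n}{\lambda}/l_1!\cdots l_n!}\bigl(\cC_{\mathfrak{S}_{l_1}\times\cdots\times\mathfrak{S}_{l_n}}(\Hec_{\lambda_1}\otimes\cdots\otimes\Hec_{\lambda_n})\bigr)$. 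Transporting $r$ through this isomorphism, it becomes, block by block, the $q$-antisymmetrizer placed inside the relevant Hecke tensor factor; the two-sided ideal it generates should be exactly $\bigoplus_\lambda\Mat(\cC_{G_\lambda}(\mathcal{I}_\lambda))$, where $\mathcal{I}_\lambda$ is the ideal realizing the quotients $\Hec_{\lambda_i}\twoheadrightarrow\TL_{\lambda_i}$. Passing to the quotient replaces every Hecke factor by its Temperley--Lieb quotient and yields the full sum with the $C_{\lambda_i}$'s, independent of $d$.

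Granting this, the theorem follows quickly: for $n\le d$ every $\lambda\vdash n$ has $\ell(\lambda)\le n\le d$, so the two dimension formulas coincide and the surjection $\Psi$ is an isomorphism; for $n>d$ the partition $(1^n)$ (and all others of length $>d$) contributes a strictly positive term that is dropped from the target, whence $\dim\End_{A\rtimes\mathfrak{S}_d}(V^{\otimes n})<\dim\BT^{\TL}_n$ and $\Psi$ cannot be injective. \textbf{The main obstacle} is precisely the upper bound $\dim\BT^{\TL}_n\le$ full sum, i.e. showing that imposing the single relation $r=0$ on $\BT^{\Hec}_n$ cuts out exactly the ideal corresponding to replacing each $\Hec_{\lambda_i}$ by $\TL_{\lambda_i}$ in every circulant block; concretely one must verify that the braid-generated conjugates and translates of $r$ reach the Temperley--Lieb relator in each Hecke tensor factor of each block. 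This is the only non-formal step, and an alternative is to prove the same upper bound directly via a Temperley--Lieb-reduced spanning set $\{\tilde{f}_{[I]}\,\tilde\sigma\}$, in the spirit of the bound established in \cite{CP} for $\FTL_{d,n}$.
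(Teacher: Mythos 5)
Your construction of the morphism and its surjectivity matches the paper's: the paper likewise starts from the surjection out of $\BT^{\Hec}_n$ supplied by \cref{thm:BT-H} and observes that the additional relation was already shown to vanish in the centralizer in the proof of \cref{thm_FTL}, exactly as you do via $\varepsilon_1\varepsilon_2$ killing the antisymmetrizer. This half of your argument is correct and essentially identical to the paper's.

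The divergence is in the injectivity criterion, and there your proposal has a genuine gap --- one you candidly flag yourself. The paper disposes of injectivity in one line by invoking the results of \cite{RH} (as it did for \cref{thm:BT-H}) and explicitly skips the details; it does not attempt the intrinsic dimension count. Your target formula $\dim\BT^{\TL}_n=\sum_{\lambda\vdash n}\binom{n}{\lambda}^2 C_{\lambda_1}\cdots C_{\lambda_n}/(l_1!\cdots l_n!)$ is the right one (it is what the paper records after the theorem), your lower bound via surjectivity at $d\ge n$ is fine, and the deduction of non-injectivity for $n>d$ needs only that lower bound, so the ``only if'' direction is sound. But the upper bound --- that the single relator $\tilde{E}_1\tilde{E}_2\tilde{\Lambda}_3$ generates, inside the matrix-circulant decomposition of $\BT^{\Hec}_n$, precisely the ideal replacing each $\Hec_{\lambda_i}$ by $\TL_{\lambda_i}$ --- is the entire content of the ``if'' direction, and neither of your two suggested routes (tracking the ideal block by block, or exhibiting a reduced spanning set in the spirit of \cite{CP}) is carried out. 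Note in particular that the block-by-block route requires showing that braid-conjugates of the relator reach \emph{every} Hecke tensor factor of \emph{every} circulant block, including blocks indexed by partitions with repeated parts where the residual $\mathfrak{S}_{l_1}\times\cdots\times\mathfrak{S}_{l_n}$-symmetry intertwines the factors; this is not automatic and is where the actual work lies. As written, the proposal establishes the theorem only modulo this unproven dimension bound, which is the same input the paper obtains externally from \cite{RH}.
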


\begin{proof}
  We already have a surjective morphism in \cref{thm:BT-H} from the Hecke algebra of braids and ties $\BT^{\Hec}_n$. Moreover, the fact that the additional relation defining $\BT^{\TL}_n$ is satisfied in the centralizer was already proved in \cref{thm_FTL}. As for the preceding theorem, the injectivity statement follows from the results of \cite{RH}. We skip the details.
\end{proof}

Applying the general results from \cref{sec:dble_grp} (and taking $d=n$), we obtain the following isomorphism in terms of usual Temperley--Lieb algebras:
\begin{equation}
  \BT^{\TL}_n\cong \bigoplus_{\lambda\vdash n} \Mat_{\binom{n}{\lambda}/l_1!\cdots l_n!}\left(\mathcal{C}_{\mathfrak{S}_{l_1}\times\dots\times \mathfrak{S}_{l_n}}\left(\TL_{\lambda_1}\otimes\cdots\otimes \TL_{\lambda_n}\right)\right)\ .
\end{equation}
Specifying the setting of \cref{sec:consequencesfixedpoints}, we find an indexation of irreducible representations over $\mathbb{C}(q)$ of $\BT^{\TL}_n$ by
\[
  (\lambda,\rho_1,\dots,\rho_n,\Lambda_1,\dots,\Lambda_n)\,,\ \ \ \text{where}\ \lambda\vdash n\,,\ \ \rho_i\vdash \lambda_i\ (\ell(\rho_i)\leq 2)\,,\ \ \Lambda_i \vdash l_i\,,
\]
where we have identified the irreducible representation of $\TL_{k}$ with partitions of $k$ with no more than two parts. The dimension of this representation is the same as for the algebra $\BT^{\Hec}_n$. The total dimension is:
\[
  \dim\BT^{\TL}_n=\sum_{\lambda\vdash n}\binom{n}{\lambda}^2\frac{C_{\lambda_1}\dots C_{\lambda_n}}{l_1!\dots l_n!}\ ,
\]
where $C_k=\frac{1}{k+1}\binom{2k}{k}$ is the Catalan number. This sequence starts (from $n=0$) with $1,1,4,29,334,5512$ and does not seem to be on \cite{OEIS}. The algebra $\BT^{\TL}_n$ is also called partition Temperley--Lieb algebra, see \cite[Section 5]{RH} and \cite{Juy2}.

\subsubsection{The BMW algebra of braids and ties.}

Here we extend our ground field with another indeterminate $\Bbbk=\mathbb{C}(q,a)$. We define the BMW algebra of braids and ties $\BT^{\BMW}_n$ as the quotient of the tied braids algebra $\TB_n$ by the relations:
\begin{align}
  e_i \tilde{s}_i & =  a^{-1} e_i & & \mbox{for} \ i=1,\dots, n-1\,, \label{BTBMW1}\\
  e_i \tilde{E}_i & =  e_i & & \mbox{for} \ i=1,\dots, n-1\,, \label{BTBMWeE}\\
  e_i \tilde{s}_{j}^{\pm1} e_i \tilde{E}_{j} & = a^{\pm1} e_{i}\tilde{E}_{j} & & \mbox{for} \ |i-j|=1\,, \label{BTBMW2}
\end{align}
where $e_i$ is defined as $e_i = \tilde{E}_i - \frac{\tilde{s}_i-\tilde{s}_i^{-1}}{q - q^{-1}}$. 

Note that Relations \eqref{BTBMW1}) and \eqref{BTBMW2} were defining relations of the framization of the BMW algebra $\FBMW_{d,n}$. On the other hand, Relation \eqref{BTBMWeE} was a consequence of the defining relations in $\FBMW_{d,n}$. However, it was proved using the explicit definition of the element $E_i$ in terms of $t_i,t_{i+1}$. Such an argument is not available here and that is why we put \eqref{BTBMWeE} as a defining relation of $\BT^{\BMW}_n$. And indeed one can check for $n=2$ that it is not implied by the other relations. It is easy to check that all other relations from \cref{lem:FBMW} are also satisfied in $\BT^{\BMW}_n$. 

In fact our goal is to have defining relations for $\BT^{\BMW}_n$ which are enough to prove the expected isomorphism:
\[
  \BT^{\BMW}_n\cong \bigoplus_{\lambda\vdash n} \Mat_{\binom{n}{\lambda}/l_1!\cdots l_n!}\left(\mathcal{C}_{\mathfrak{S}_{l_1}\times\dots\times \mathfrak{S}_{l_n}}\left(\BMW_{\lambda_1}\otimes\cdots\otimes \BMW_{\lambda_n}\right)\right)
\]
and in particular, to lead to a resulting dimension of $\BT^{\BMW}_n$ given by
\[
  \dim\BT^{\BMW}_n=\sum_{\lambda\vdash n}\binom{n}{\lambda}^2\frac{(2\lambda_1)!\dots (2\lambda_n)!}{2^nl_1!\dots l_n!\lambda_1!\dots\lambda_n!}\ .
\]
This sequence of dimensions starts (from $n=0$) with $1,1,5,48,747$ and is not on \cite{OEIS}. We have checked that the defining relations above give the correct dimension for $n\leq 3$.

For some specializations of $a$, we leave to the reader to formulate the obvious analogues of \cref{prop:FBMW} (adding the $\mathfrak{S}_d$-action into the picture, as above).

%%%%%%%%%%%%%%%%%%%%%%
%    Bibliography    %
%%%%%%%%%%%%%%%%%%%%%%

\bibliographystyle{habbrv}
\bibliography{biblio}

\begin{thebibliography}{10}

\bibitem{AiJ}
F.~Aicardi and J.~Juyumaya.
\newblock Tied links.
\newblock {\em J. Knot Theory Ramifications}, 25(9):1641001, 28, 2016.

\bibitem{AJ-BT}
F.~{Aicardi} and J.~{Juyumaya}.
\newblock {An algebra involving braids and ties}.
\newblock {\em arXiv e-prints}, 2017, 1709.03740.

\bibitem{AiJ2}
F.~Aicardi and J.~Juyumaya.
\newblock Kauffman type invariants for tied links.
\newblock {\em Math. Z.}, 289(1-2):567--591, 2018.

\bibitem{AE}
D.~{Arcis} and J.~{Espinoza}.
\newblock {Tied--boxed algebras}.
\newblock {\em arXiv e-prints}, Dec. 2023, 2312.04844.

\bibitem{ArJ}
D.~Arcis and J.~Juyumaya.
\newblock Tied monoids.
\newblock {\em Semigroup Forum}, 103(2):356--394, 2021.

\bibitem{Chl}
M.~Chlouveraki.
\newblock From the framisation of the {T}emperley--{L}ieb algebra to the
  {J}ames polynomial: an algebraic approach.
\newblock In {\em Knots, low-dimensional topology and applications}, volume 284
  of {\em Springer Proc. Math. Stat.}, pages 247--276. Springer, Cham, 2019.

\bibitem{CJKL}
M.~Chlouveraki, J.~Juyumaya, K.~Karvounis, and S.~Lambropoulou.
\newblock Identifying the invariants for classical knots and links from the
  {Y}okonuma--{H}ecke algebras.
\newblock {\em Int. Math. Res. Not. IMRN}, (1):214--286, 2020.

\bibitem{CP}
M.~Chlouveraki and G.~Pouchin.
\newblock Representation theory and an isomorphism theorem for the framisation
  of the {T}emperley--{L}ieb algebra.
\newblock {\em Math. Z.}, 285(3-4):1357--1380, 2017.

\bibitem{CPA}
M.~Chlouveraki and L.~Poulain~d'Andecy.
\newblock Representation theory of the {Y}okonuma--{H}ecke algebra.
\newblock {\em Adv. Math.}, 259:134--172, 2014.

\bibitem{CPA2}
M.~Chlouveraki and L.~Poulain~d'Andecy.
\newblock Markov traces on affine and cyclotomic {Y}okonuma--{H}ecke algebras.
\newblock {\em Int. Math. Res. Not. IMRN}, (14):4167--4228, 2016.

\bibitem{DR}
V.~G. Drinfel'd.
\newblock Almost cocommutative {H}opf algebras.
\newblock {\em Algebra i Analiz}, 1(2):30--46, 1989.

\bibitem{ER}
J.~Espinoza and S.~Ryom-Hansen.
\newblock Cell structures for the {Y}okonuma-{H}ecke algebra and the algebra of
  braids and ties.
\newblock {\em J. Pure Appl. Algebra}, 222(11):3675--3720, 2018.

\bibitem{FJL}
M.~Flores, J.~Juyumaya, and S.~Lambropoulou.
\newblock A framization of the {H}ecke algebra of type {\tt {b}}.
\newblock {\em J. Pure Appl. Algebra}, 222(4):778--806, 2018.

\bibitem{Gou}
D.~Goundaroulis.
\newblock A survey on {T}emperley--{L}ieb-type quotients from the
  {Y}okonuma--{H}ecke algebras.
\newblock In {\em Algebraic modeling of topological and computational
  structures and applications}, volume 219 of {\em Springer Proc. Math. Stat.},
  pages 37--55. Springer, Cham, 2017.

\bibitem{GJKL}
D.~Goundaroulis, J.~Juyumaya, A.~Kontogeorgis, and S.~Lambropoulou.
\newblock Framization of the {T}emperley--{L}ieb algebra.
\newblock {\em Math. Res. Lett.}, 24(2):299--345, 2017.

\bibitem{JPA}
N.~Jacon and L.~Poulain~d'Andecy.
\newblock An isomorphism theorem for {Y}okonuma--{H}ecke algebras and
  applications to link invariants.
\newblock {\em Math. Z.}, 283(1-2):301--338, 2016.

\bibitem{JPA2}
N.~Jacon and L.~Poulain~d'Andecy.
\newblock Clifford theory for {Y}okonuma--{H}ecke algebras and deformation of
  complex reflection groups.
\newblock {\em J. Lond. Math. Soc. (2)}, 96(3):501--523, 2017.

\bibitem{Jim}
M.~Jimbo.
\newblock A {$q$}-analogue of {$U(\mathfrak{gl}(N+1))$}, {H}ecke algebra, and
  the {Y}ang--{B}axter equation.
\newblock {\em Lett. Math. Phys.}, 11(3):247--252, 1986.

\bibitem{Juy}
J.~Juyumaya.
\newblock Markov trace on the {Y}okonuma--{H}ecke algebra.
\newblock {\em J. Knot Theory Ramifications}, 13(1):25--39, 2004.

\bibitem{Juy2}
J.~{Juyumaya}.
\newblock {A Partition Temperley--Lieb Algebra}.
\newblock {\em arXiv e-prints}, Apr. 2013, 1304.5158.

\bibitem{JuLa}
J.~{Juyumaya} and S.~{Lambropoulou}.
\newblock {Modular framization of the BMW algebra}.
\newblock {\em arXiv e-prints}, July 2010, 1007.0092.

\bibitem{JuLa1}
J.~Juyumaya and S.~Lambropoulou.
\newblock An adelic extension of the {J}ones polynomial.
\newblock In {\em The mathematics of knots}, volume~1 of {\em Contrib. Math.
  Comput. Sci.}, pages 125--142. Springer, Heidelberg, 2011.

\bibitem{KlSc}
A.~Klimyk and K.~Schm\"{u}dgen.
\newblock {\em Quantum groups and their representations}.
\newblock Texts and Monographs in Physics. Springer-Verlag, Berlin, 1997.

\bibitem{LV}
A.~Lacabanne and P.~Vaz.
\newblock Schur--{W}eyl duality, {V}erma modules, and row quotients of
  {A}riki--{K}oike algebras.
\newblock {\em Pacific J. Math.}, 311(1):113--133, 2021.

\bibitem{LNX}
C.-J. {Lai}, D.~K. {Nakano}, and Z.~{Xiang}.
\newblock {Quantum wreath products and {S}chur--{W}eyl duality {I}}.
\newblock {\em arXiv e-prints}, Apr. 2023, 2304.14181.

\bibitem{LZ}
G.~I. Lehrer and R.~B. Zhang.
\newblock Strongly multiplicity free modules for {L}ie algebras and quantum
  groups.
\newblock {\em J. Algebra}, 306(1):138--174, 2006.

\bibitem{MS}
P.~Martin and H.~Saleur.
\newblock The blob algebra and the periodic {T}emperley--{L}ieb algebra.
\newblock {\em Lett. Math. Phys.}, 30(3):189--206, 1994.

\bibitem{MW}
P.~P. Martin and D.~Woodcock.
\newblock Generalized blob algebras and alcove geometry.
\newblock {\em LMS J. Comput. Math.}, 6:249--296, 2003.

\bibitem{MaSt}
V.~Mazorchuk and C.~Stroppel.
\newblock {$G(\ell,k,d)$}-modules via groupoids.
\newblock {\em J. Algebraic Combin.}, 43(1):11--32, 2016.

\bibitem{OEIS}
{OEIS Foundation Inc.}
\newblock The {O}n-{L}ine {E}ncyclopedia of {I}nteger {S}equences, 2023.
\newblock Published electronically at \url{http://oeis.org}.

\bibitem{OR}
R.~Orellana and A.~Ram.
\newblock Affine braids, {M}arkov traces and the category {$\mathcal{O}$}.
\newblock In {\em Algebraic groups and homogeneous spaces}, volume~19 of {\em
  Tata Inst. Fund. Res. Stud. Math.}, pages 423--473. Tata Inst. Fund. Res.,
  Mumbai, 2007.

\bibitem{Pou}
L.~Poulain~d'Andecy.
\newblock Invariants for links from classical and affine {Y}okonuma--{H}ecke
  algebras.
\newblock In {\em Algebraic modeling of topological and computational
  structures and applications}, volume 219 of {\em Springer Proc. Math. Stat.},
  pages 77--95. Springer, Cham, 2017.

\bibitem{PAW}
L.~Poulain~d'Andecy and E.~Wagner.
\newblock The {HOMFLY}-{PT} polynomials of sublinks and the {Y}okonuma--{H}ecke
  algebras.
\newblock {\em Proc. Roy. Soc. Edinburgh Sect. A}, 148(6):1269--1278, 2018.

\bibitem{PZ}
L.~{Poulain d'Andecy} and M.~{Zaimi}.
\newblock {Fused Hecke algebra and one-boundary algebras}.
\newblock {\em arXiv e-prints}, June 2023, 2306.10937.

\bibitem{Res}
N.~Y. Reshetikhin.
\newblock Quantized universal enveloping algebras, the {Y}ang--{B}axter
  equation and invariants of links, {I}.
\newblock Technical report, Akad. Nauk St. Petersburg. Inst. Yarn. Math., St.
  Petersburg, 1987.

\bibitem{RH11}
S.~Ryom-Hansen.
\newblock On the representation theory of an algebra of braids and ties.
\newblock {\em J. Algebraic Combin.}, 33(1):57--79, 2011.

\bibitem{RH}
S.~Ryom-Hansen.
\newblock On the annihilator ideal in the {$bt$}-algebra of tensor space.
\newblock {\em J. Pure Appl. Algebra}, 226(8):Paper No. 107028, 24, 2022.

\bibitem{SaSh}
M.~Sakamoto and T.~Shoji.
\newblock Schur-{W}eyl reciprocity for {A}riki--{K}oike algebras.
\newblock {\em J. Algebra}, 221(1):293--314, 1999.

\end{thebibliography}

\end{document}